%
%
%

\documentclass{amsart}%
\usepackage{amsfonts}
\usepackage{amsmath}
\usepackage{amssymb}
\usepackage{graphicx}%
\setcounter{MaxMatrixCols}{30}

\newtheorem{theorem}{Theorem}[section]
\theoremstyle{plain}
\newtheorem{corollary}[theorem]{Corollary}
\newtheorem{lemma}[theorem]{Lemma}
\newtheorem{proposition}[theorem]{Proposition}
\newtheorem{definition}[theorem]{Definition}
\theoremstyle{remark}
\newtheorem{remark}[theorem]{Remark}
\numberwithin{equation}{section}

\begin{document}
\title[Exceptional circles]{Exceptional circles of radial potentials}
\author{Michael Music}
\address{Michael Music, Department of Mathematics\\
University of Kentucky, Lexington, Kentucky 40506--0027}
\email{michael.music@uky.edu}
\author{Peter A. Perry}
\address{Peter A. Perry, Department of Mathematics\\
University of Kentucky, Lexington, Kentucky 40506--0027}
\email{perry@ms.uky.edu}
\author{Samuli Siltanen}
\address{Samuli Siltanen, University of Helsinki, P.O. Box 68, Helsinki, Finland FI-00014}
\email{samuli.siltanen@helsinki.fi}
\date{October 25, 2012}

\begin{abstract}
A nonlinear scattering transform is studied for the two-dimensional Schr\"{o}%
\-%
dinger equation at zero energy with a radial potential. First explicit examples are presented, both theoretically and computationally, of potentials with nontrivial
singularities in the scattering transform. The singularities arise from
non-uniqueness of the complex geometric optics solutions that define the
scattering transform. The values of the complex spectral parameter at which the singularities appear are called {\em exceptional points}. The singularity formation is closely related to the fact
that potentials of conductivity type are ``critical'' in the sense of Murata.

\end{abstract}
\maketitle

\noindent Keywords: Schr\"odinger operator, exceptional point, scattering transform, nonlinear Fourier transform

\section{Introduction}

\noindent
We study singularities of the scattering
transform at zero energy for two-dimensional Schr\"{o}dinger operators with radial and compactly supported potentials. We will present what is, to our knowledge,
the first example of a family of potentials for which the singularities of the
scattering transform can be computed explicitly. The singularities, occurring at so-called {\em exceptional points} of the potential, arise from
non-uniqueness of complex geometric optics (CGO) solutions of the
Schr\"{o}dinger equation. We also present the numerical computations which inspired this work and display the formation of
singularities under perturbation of a potential of conductivity type.

To motivate our main results, let us define a class of potentials that plays a central role in our work.

\begin{definition}\label{def:condtype}
A compactly supported real-valued potential $q\in C^\infty_0(\mathbb{R}^2)$ is {\em of conductivity type} if $q=\psi^{-1}\left(  \Delta\psi\right) $ for some real-valued $\psi\in C^\infty(\mathbb{R}^2)$ satisfying $\psi(z)\geq c > 0$ for all $z$ in a bounded set $\Omega\subset\mathbb{R}^2$ and $\psi(z) \equiv 1$  for all $z\in\mathbb{R}^2\setminus\Omega$.
\end{definition}
\noindent
 Note that the positive function $\psi$ above solves $\left(  -\Delta+q\right)  \psi=0$. This terminology arose when
Schr\"{o}dinger scattering theory was used to analyze the inverse conductivity
problem in Nachman \cite{Nachman:1996}, and was also needed in \cite{LMSS:2011,Perry:2012}. In those works $q$ is not necessarily compactly supported, but a condition implying $\lim_{\left\vert z\right\vert \rightarrow \infty}\psi(z)=1$ is crucial. In Appendix \ref{appendix:spectrum}, we show that the associated Schr\"{o}dinger operator has no eigenvalues, and that the function $\psi$ representing $q$ is unique.

Recall that the inverse conductivity problem of Calder\'on \cite{ca} consists in reconstructing the conductivity $\sigma$ of a
conducting body $\Omega\subset\mathbb{R}^{2}$ from the Dirichlet to Neumann
map, defined as follows. Let $f\in H^{1/2}(\partial\Omega)$ and let $u\in
H^{1}(\Omega)$ solve%
\begin{align*}
\nabla\cdot\left(  \sigma\nabla u\right)    & =0\\
\left.  u\right\vert _{\partial\Omega}  & =f.
\end{align*}
This solution is unique if $\sigma\in L^\infty(\Omega)$ is real-valued and strictly positive. The Dirichlet-to-Neumann map is the map
$\Lambda_{\sigma}:H^{1/2}(\partial\Omega)\rightarrow H^{-1/2}(\partial\Omega)$
given by%
\[
\Lambda_{\sigma}f=\sigma\left.  \frac{\partial u}{\partial\nu}\right\vert
_{\partial\Omega}.
\]
Calder\'on's problem is to reconstruct the function $\sigma$ from knowledge of
the map $\Lambda_{\sigma}$. In dimension two, Calder\'on's problem in its original form was solved in \cite{ap}.

Nachman \cite{Nachman:1996} exploited the fact that $\psi=\sigma^{1/2}u$
solves the Schr\"{o}dinger equation $\left(  -\Delta+q\right)  \psi=0$ where
$q=\sigma^{-1/2}\Delta\left(  \sigma^{1/2}\right)  $. The Schr\"{o}dinger
problem $\left(  -\Delta+q\right)  \psi=0$ also has a Dirichlet-to-Neumann map
$\Lambda_{q}$ which can be determined from $\Lambda_{\sigma}$. Using
$\Lambda_{q}$, Nachman was able to construct CGO
solutions to the Schr{\"o}dinger equation, use these solutions to compute the
scattering transform of $q$, and reconstruct $\sigma$ by inverting the scattering
transform. 

The theory of the scattering transform has been worked out in
detail for potentials of conductivity type (see Lassas, Mueller and Siltanen
\cite{LMS:2007} and references therein), but far less is known about general
classes.
Grinevich and Novikov \cite[part I of supplement 1]{GN:1988} commented that Schr\"odinger potentials with well-behaved scattering transforms are ``not in general position,'' and Nachman proved under minimal hypotheses that the scattering transform is regular if and only if the potential is of conductivity type. Below we show how work of Murata \cite{Murata:1986} implies that the set of conductivity type potentials is unstable under $\mathcal{C}_0^\infty$ perturbations and is therefore unstable in any reasonable function space!

Let us define the CGO\ solutions and scattering transform for a potential
$q\in\mathcal{C}_{0}^{\infty}(\mathbb{R}^{2})$. To define the CGO\ solutions,
let $k\in\mathbb{C}$, set $z=x+iy$ and write
\begin{equation}
kz=\left(  k_{1}+ik_{2}\right)  (x+iy)\label{eq:kx}%
\end{equation}
(complex multiplication). The CGO\ solutions $\psi(z,k)$ solve%
\begin{align}
\left(  -\Delta+q\right)  \psi &  =0,\label{eq:CGO}\\
\lim_{\left\vert z\right\vert \rightarrow\infty}e^{-ikz}\psi(z,k) &
=1.\nonumber
\end{align}
The set of \emph{nonzero }$k\in\mathbb{C}$ for which (\ref{eq:CGO}) does not have a unique
solution is called the \emph{exceptional set} $\mathcal{E}$. If the
exceptional set is empty,  the solutions $\psi(z,k)$ form a smooth family, and
the \emph{scattering transform} of $q$ is  given by%
\begin{equation}
\mathbf{t}(k)=\int e^{i\overline{k}\overline{z}}q(z)\psi(z,k)~dz.\label{eq:t}%
\end{equation}
The behavior of $\mathbf{t}$ at $k=0$ plays a special role which we will elucidate in what follows.

Nachman showed (under rather less stringent regularity assumptions than
$q\in\mathcal{C}_{0}^{\infty}(\mathbb{R}^{2})$) that $q$ is of conductivity type
if and only if:
\begin{enumerate}
\item[(i)] $\mathcal{E}$ is empty, and
\item[(ii)]
$\left| \mathbf{t}(k) \right| \leq C|k|^\epsilon$ for $|k|$ small and some $\epsilon>0$.
\end{enumerate} 
(see \cite{Nachman:1996}, Theorem 3). Until this time
it was not clear how the scattering transform behaved for potentials outside
this limited class. Our purpose here is to construct and analyze examples for
which the singularities of $\mathbf{t}(k)$ can be computed explicitly. In our
examples, $\mathbf{t}(k)$ is well-defined except on a circle in the complex
$k$-plane, and we can compute the singularity explicitly.

We will study the scattering transform for families of radial potentials
$q_{\lambda}$ defined as follows. Denote by $B_{1}$ the open unit disc in
$\mathbb{R}^{2}$ centered at $0$, so that $\partial B_{1}=S^{1}$ regarded as
an embedded manifold in $\mathbb{R}^{2}$. Suppose that $\sigma\in \mathcal{C}^\infty(\mathbb{R}^2)$ is a real-valued radial function satisfying $\sigma(z)>0$ for all $z\in\mathbb{R}^2$ and $\sigma-1\in\mathcal{C}_{0}^{\infty}\left(  B_{1}\right)$. Then $q_{0}:=\sigma^{-1/2}\Delta\sigma^{1/2}\in
\mathcal{C}_{0}^{\infty}\left(  B_{1}\right)$ is a radial potential of
conductivity type. For a nonnegative radial function $w\in
\mathcal{C}_{0}^{\infty}(B_{1})$ not identically zero, set%
\begin{equation}
q_{\lambda}=q_{0}+\lambda w.\label{eq:q.lambda}%
\end{equation}
We show in Appendix \ref{app:radial} that, for a radial potential, $\mathbf{t}(k)$ is a real-valued and radial function.  Our main result is:

\begin{theorem}
\label{thm:main}
Denote by $\mathbf{t}_{\lambda}$ the scattering transform of
$q_{\lambda}$. For small $\lambda \neq 0$,
\[
\mathbf{t}(k) = -\frac{2\pi}{\log |k|}+\mathcal{O}(k)
\]
as $k \rightarrow 0$. Moreover:
\newline(1) For $\lambda>0$ sufficiently small, the exceptional
set $\mathcal{E}$ is empty, and the scattering transform $\mathbf{t}_{\lambda
}$ is $\mathcal{C}^{\infty}$ away from $k=0$.  
\newline(2)\ For $\lambda<0$
sufficiently small and a unique $r(\lambda)>0$, the exceptional set
$\mathcal{E}$ is a circle $C_{\lambda}$ of radius $r(\lambda)$ about the
origin, and the function $\mathbf{t}_{\lambda}$ is $\mathcal{C}^{\infty}$ on
$\mathbb{R}^{2}\backslash\left[  C_{\lambda}\cup\left\{  0\right\}  \right]
$, while
\[
\lim_{\left\vert k\right\vert \rightarrow r\left(  \lambda\right)  }\left\vert
\mathbf{t}_{\lambda}(k)\right\vert =\infty\text{.}%
\]
The radius $r(\lambda)$ obeys the formula%
\begin{equation}
r(\lambda)\underset{\lambda\uparrow0}{\sim}\exp\left[  2\pi\left(
h+\frac{\left(  1+\mathcal{O}\left(  \lambda\right)  \right)  }{2\pi
\mu(\lambda)}\right)  \right]  \label{eq:r.lambda}%
\end{equation}
where
\[
h=-\frac{\gamma}{2\pi},
\]
$\gamma$ is Euler's constant, and $\mu(\lambda)$ is the eigenvalue of the
Dirichlet-to-Neumann operator for $q_{\lambda}$ corresponding to the constant
functions on $S^{1}$.
\end{theorem}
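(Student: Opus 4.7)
\medskip
\noindent\textbf{Proof proposal.} The plan is to exploit radial symmetry to reduce the CGO equation to a one-parameter family of ODEs in $|k|$, and then to use perturbation theory around the critical potential $q_0$.

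First I would set $\psi(z,k) = e^{ikz} m(z,k)$ and, since $q$ is radial and $\mathbf{t}$ is radial by Appendix \ref{app:radial}, restrict to $k = \tau > 0$. Writing $z = re^{i\theta}$ and expanding $m = \sum_{n \in \mathbb{Z}} m_n(r,\tau)\, e^{in\theta}$ decouples $(-\Delta + q)\psi = 0$ into a family of radial ODEs parametrized only by $\tau$. Exceptional values of $\tau$ are precisely those for which one of these ODEs admits a nontrivial solution that is regular at the origin and matches the prescribed asymptotics of $\psi$ at infinity; equivalently, zeros of a Wronskian $W_n(\tau)$ built from a canonical basis of the ODE.

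Next I would analyze the unperturbed system. By Nachman's theorem, $q_0$ is of conductivity type, so $\mathcal{E} = \emptyset$ and $W_n(\tau) \neq 0$ for all $n$ and $\tau > 0$. However, the positive solution $\psi_0$ shows that $q_0$ is \emph{critical} in Murata's sense: zero is exactly at the bottom of the spectrum of $-\Delta + q_0$. Adding $\lambda w$ with $w \geq 0, w \not\equiv 0$ is a one-sided form perturbation, pushing the ground state up for $\lambda > 0$ (subcritical) and creating a bound state for $\lambda < 0$ (supercritical). I would show, by a symmetry argument and first-order perturbation at the threshold, that this effect touches only the zero angular mode. Consequently, for $\lambda > 0$ all $W_n$ remain nonvanishing, giving $\mathcal{E} = \emptyset$ and, via the implicit function theorem, $\mathbf{t}_\lambda \in \mathcal{C}^\infty$ away from $k=0$; for $\lambda < 0$, the Wronskian $W_0(\tau)$ acquires a unique simple zero $\tau = r(\lambda)$, which by rotational invariance lifts to the full circle $C_\lambda$.

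The behavior at $k=0$ and the radius formula both stem from the two-dimensional free Green's function expansion
\[
G_0(z,k) = -\frac{1}{2\pi}\log|k| + h + \mathcal{O}(|k|), \qquad h = -\frac{\gamma}{2\pi}.
\]
Inserting this into the Lippmann--Schwinger equation for the $n=0$ mode converts the CGO problem into a boundary identity on $S^1$ involving the Dirichlet-to-Neumann operator for $q_\lambda$ applied to the constant function. Reading off the leading term in the resulting expression for $\mathbf{t}_\lambda$ yields $\mathbf{t}_\lambda(k) = -2\pi/\log|k| + \mathcal{O}(k)$ at once, while the vanishing of $W_0(\tau)$ becomes a matching condition of the schematic form
\[
\mu(\lambda)\left( \tfrac{1}{2\pi}\log\tau - h \right) = 1 + \mathcal{O}(\lambda),
\]
which, solved for $\tau$, produces the exponential formula (\ref{eq:r.lambda}).

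The main obstacle will be tracking the constants carefully enough through the logarithmic expansion to recover $h = -\gamma/(2\pi)$ exactly and to show that the correction inside the exponent is $(1+\mathcal{O}(\lambda))/(2\pi\mu(\lambda))$ with no spurious $\lambda$-independent remainder. This requires a uniform resolvent expansion, as $\lambda \uparrow 0$, whose small-$k$ behavior degenerates precisely in that limit, together with a monotonicity statement $\mu(\lambda) \to 0^-$ as $\lambda \uparrow 0$ to guarantee $r(\lambda) \to 0$.
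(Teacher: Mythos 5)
Your intuition about the mechanism is essentially correct (criticality of $q_0$, the $n=0$ angular mode, the $\log|k|$ in the two-dimensional free Green's function, and $\mu(\lambda)\to 0^-$ driving $r(\lambda)\to 0$), and it agrees with the paper's picture. But the proposed technical route has real gaps.

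The central difficulty is that the problem does \emph{not} decouple into a one-parameter family of radial ODEs with per-mode Wronskians $W_n(\tau)$. Inside $B_1$ the equation $(-\Delta+q)\psi=0$ is indeed rotationally invariant and each angular mode $\psi_n$ satisfies its own ODE; this is exactly why the Dirichlet-to-Neumann map $\Lambda_q$ is diagonal, $\Lambda_q\varphi_n=\mu_n(q)\varphi_n$. But the exceptional condition comes from matching across $S^1$ to the CGO asymptotics, and that matching is governed by the Faddeev single-layer operator $\mathcal{S}_k$, which convolves with $G_k(z-z')$ and mixes all angular modes. So the exceptional set is the zero locus of the determinant of a \emph{coupled} system, not a collection of independent Wronskian conditions. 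The paper's substitute for your Wronskian picture is the resolvent factorization $(I+T_{k,\lambda})^{-1}=(I+T_{k,0}+R)^{-1}(I+\mu F)^{-1}$ (Lemma \ref{lemma:FR}), which rests on the decomposition $\mathcal{S}_k=\mathcal{S}_0+\mathcal{H}_k-(\log|k|)P$ and on the observation that $T_{k,0}$ annihilates constants. That factorization is what actually \emph{proves} that only the $n=0$ channel, acted on by the projection $P$, carries the singular $\log|k|$ dependence; your statement that the perturbation ``touches only the zero angular mode'' is the conclusion of this analysis, not an input that a symmetry argument provides for free.

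Several further steps are asserted but not supplied. For $\lambda>0$ the claim $\mathcal{E}=\emptyset$ does not follow from small-$\lambda$ perturbation theory alone: as $\lambda\downarrow 0^+$ the would-be exceptional radius moves to $+\infty$, so you must separately control large $|k|$. The paper does this with the Sylvester--Uhlmann large-$|k|$ existence theorem (Corollary \ref{cor:lam.pos}); without such an ingredient you cannot rule out exceptional points for $|k|$ large. Uniqueness and smoothness of the circle require the gradient estimate $|\nabla_k D(k,\lambda)|\sim|\mu(\lambda)|/|k|\neq 0$ (Remark \ref{remark:circle}); implicit function theorem alone does not give a single circle without ruling out other components of the zero set. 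Finally, the blow-up $|\mathbf{t}_\lambda(k)|\to\infty$ on the circle does not follow automatically from $D(k_c,\lambda)=0$: one must check that the rank-one singular term does not cancel in the boundary integral (\ref{eq:t.b}), i.e.\ verify the nonvanishing of the overlaps (\ref{eq:liminf1})--(\ref{eq:liminf2}). None of these verifications appear in your sketch, and the last two are genuinely nontrivial.
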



Theorem \ref{thm:main} shows that $\lambda=0$ is an `essential singularity'
for the map $\lambda\mapsto\mathbf{t}_{\lambda}$. The cases $\lambda=0$,
$\lambda<0$, and $\lambda>0$ may be characterized in the following way. We
recall from Murata \cite{Murata:1986} that a Schr\"{o}dinger operator
$-\Delta+q$ is called
\begin{itemize}
\item[(i)] \emph{subcritical} if $-\Delta+q$ has a positive Green's function,

\item[(ii)] \emph{critical} if $-\Delta+q$ does not have a positive Green's function,
but the quadratic form%
\begin{equation*}
\mathfrak{q}(v,v)=\int_{\mathbb{R}^2}\left( \left\vert (\nabla v)(z)\right\vert
^{2}+q(z)\left\vert v(z)\right\vert ^{2}\right)dz
\end{equation*}
on $\mathcal{C}_{0}^{\infty}(\mathbb{R}^{2})\times\mathcal{C}_{0}^{\infty
}(\mathbb{R}^{2})$ is nonnegative, and

\item[(iii)] \emph{supercritical} if the quadratic form $\mathfrak{q}$ is not nonnegative.
\end{itemize}

Appendix \ref{appendix:Murata} below shows how \cite{Murata:1986} implies the following. First of all, the conductivity-type potential $q_0$ is critical. Furthermore, taking $\lambda<0$ in (\ref{eq:q.lambda}) gives a supercritical potential $q_\lambda$ which cannot be of conductivity type since there is no positive solution of $\left(  -\Delta+q\right)  \psi=0$. Finally, taking $\lambda>0$ in (\ref{eq:q.lambda}) gives a subcritical potential $q_\lambda$ allowing a unique positive solution $\psi$ of $\left(  -\Delta+q\right)  \psi=0$. However, this $\psi(z)$ grows logarithmically in $|z|$ and does not satisfy $\lim_{\left\vert z\right\vert \rightarrow \infty}\psi(z)=1$, so $q_\lambda$ is not of conductivity type in the sense of Definition \ref{def:condtype}. Therefore, the class of conductivity-type potentials is not stable under perturbations of $\lambda$.

The scattering transform $\mathcal{T}:q\rightarrow\mathbf{t}$ plays an
important role not only in Nachman's solution of the inverse conductivity
problem, but also in the solution of the Novikov-Veselov (NV) equation by the
method of inverse scattering (see Lassas, Mueller, Siltanen and Stahel
\cite{LMSS:2011} and Perry \cite{Perry:2012} for details and further
references). Thus, a clear understanding of the singularities of $\mathcal{T}$
is important both for Schr\"{o}dinger inverse scattering and for understanding
the dynamics of the NV\ equation and other completely integrable equations in
the NV\ hierarchy.

It follows from Perry \cite{Perry:2012} that the Novikov-Veselov equation with initial data $q_0$ has a global solution. On the other hand, Taimanov and Tsarev \cite{TT:2007,TT:2008a,TT:2008b,TT:2010} have used the Moutard transformation to construct initial data $q$ for the Novikov-Veselov so that the solutions blow up in finite time. These potentials have $L^2$ eigenvalues at zero energy, and hence are not of conductivity type (see Proposition \ref{prop:cond.spec}). We conjecture the following dichotomy for the Cauchy
problem for the Novikov-Veselov equation:
\begin{itemize}
\item[(a)] If the initial data is critical or slightly subcritical, the solution exists globally in
time, and
\item[(b)] In all other cases, the solution blows up in finite time.
\end{itemize}
We hope to return to this question in a subsequent paper.

We close this introduction by sketching the proof of Theorem \ref{thm:main}
and summarizing the contents of this paper. To analyze the singularities of
the scattering transform $\mathbf{t}_{\lambda}$ for the family
(\ref{eq:q.lambda}), we recall the reduction of (i) the problem (\ref{eq:CGO})
and (ii) the map (\ref{eq:t}) respectively to (i) a boundary integral equation
of Fredholm type (see (\ref{eq:CGO.b})), and (ii) a boundary integral (see
\ref{eq:t.b}). We refer the reader to Nachman \cite{Nachman:1996}, Theorem 5
and its proof in section 7 for a complete discussion.

In order to state the reduction, we first define the Dirichlet-to-Neumann map,
denoted $\Lambda_{q}$, for the Dirichlet problem%
\begin{align}
\left(  -\Delta+q\right)  u  &  =0\text{ in }B_{1}\label{eq:DP}\\
\left.  u\right\vert _{S^{1}}  &  =f.\nonumber
\end{align}
If zero is not an eigenvalue of the operator $-\Delta+q$ with Dirichlet boundary conditions on $B_1$,
the problem (\ref{eq:DP}) has a unique solution $u$
 for given $f\in
H^{1/2}(S^{1})$, and we set%
\begin{equation}\label{def:Lambda_q}
\Lambda_{q}f=\left.  \frac{\partial u}{\partial\nu}\right\vert _{S^{1}}%
\end{equation}
where $\partial/\partial\nu$ denotes differentiation with respect to the
outward normal on $S^{1}$. We let $\langle \cdot,
\Lambda_q\cdot \rangle$ denote the corresponding bilinear form:
\begin{equation} \label{eq:BF}
 \langle g, \Lambda_q f \rangle = \int_D (\nabla v \cdot \nabla u + qvu)dz \qquad \left(=\int_{S^1} v \frac{\partial u}{\partial \nu}dS\right),
\end{equation}
where $u$ solves (\ref{eq:DP}) and $v\in H^1(D)$ with $\left. v\right\vert_{S^1}=g$. We will denote by $\Lambda_{0}$ the
Dirichlet-to-Neumann operator for (\ref{eq:DP}) with $q=0$. It is known that
$\Lambda_{q}:H^{1/2}(\partial\Omega)\rightarrow H^{-1/2}(\partial\Omega)$. If
$\psi$ denotes the restriction of the unique solution of (\ref{eq:CGO}) to
$S^{1}$, then%
\begin{equation}
\left.  \psi\right\vert _{S^{1}}=e^{ikz}-T\psi\label{eq:CGO.b}%
\end{equation}
where $T:H^{1/2}(S^{1})\rightarrow H^{1/2}(S^{1})$ is the compact operator%
\begin{equation}
T\psi=\mathcal{S}_{k}\left(  \Lambda_{q}-\Lambda_{0}\right)  \psi.
\label{eq:T}%
\end{equation}
Here, the operator $\mathcal{S}_{k}$ is an integral operator%
\begin{equation}
\left(  \mathcal{S}_{k}\psi\right)  (z)=\int_{S^{1}}G_{k}(z-z^\prime)\psi
(z^\prime)~dS(z^\prime), \label{eq:Sk}%
\end{equation}
$G_{k}(~\cdot~)$ is Faddeev Green's function (see (\ref{eq:Gk}) in what
follows), and $dS$ is arc length measure on $S^{1}$. The formula%
\begin{equation}
\mathbf{t}(k)=\int_{S^{1}}e^{i\overline{k}\overline{z}}\left[  \left(
\Lambda_{q}-\Lambda_{0}\right)  \psi\right]  (z,k)~dS(z) \label{eq:t.b}%
\end{equation}
computes the scattering transform in terms of the boundary data $\psi$ that
solve (\ref{eq:CGO.b}).

To prove Theorem \ref{thm:main}, we will study the family of operators
$T_{k,\lambda}$ corresponding to (\ref{eq:T}) with $q$ given by
(\ref{eq:q.lambda}). We will show that the resolvent $\left(  I+T_{k,\lambda}\right)  ^{-1}$ has a rank-one singularity if $\lambda<0$
 for $\left\vert k\right\vert =r(\lambda)$, where the asymptotics of $r\left(
\lambda\right)  $ are given by (\ref{eq:r.lambda}). We will then use
(\ref{eq:t.b})\ to show that this rank-one singularity leads to a singularity
in $\mathbf{t}\left(  k\right)  $ on the circle of radius $r(\lambda)$.

The structure of this paper is as follows. In \S \ref{sec:prelim} we recall
some basic facts needed to analyze (\ref{eq:CGO.b}) and the associated
operators. In \S \ref{sec:singular} we extract the rank-one singularity of the
resolvent $\left(  I+T\right)  ^{-1}$ and show the existence of a `circle of
singularities' for $\mathbf{t}_{\lambda}$. Finally, in \S \ref{sec:numerics},
we present numerical computations of $\mathbf{t}_{\lambda}$ and compare the results to the analysis in
\S \ref{sec:singular}.

\textbf{Acknowledgements}. Two of us (P.P. and S.S.) thank the Isaac Newton
Institute for hospitality during part of the time this work was carried out. The work of S.S. was supported in part by the Academy of Finland (Finnish Centre of Excellence in Inverse Problems Research 2006--2011 and 2012--2017, decision numbers 213476 and 250215). Funding from Teknologiateollisuus r.y. was used to cover Microsoft Azure cloud computing resources. Also, we thank Techila Ltd. for providing technical support and discounts for parallelization middleware solutions used in the computations.
M.M. and P.P. thank the National Science Foundation for support under grant DMS-1208778.

\section{Preliminaries}\label{sec:prelim}

\noindent
First, we recall that $H^{1/2}(S^{1})$ can be defined in terms of the Fourier
basis $\left\{  \varphi_{n}\right\}  _{n=-\infty}^{\infty}$ of $L^{2}(S^{1})$
given by%
\begin{equation}
\varphi_{n}(\theta)=\frac{e^{in\theta}}{\sqrt{2\pi}}\label{eq:phi.n}%
\end{equation}
in the following way. A\ function $f\in L^{2}(S^{1})$ belongs to
$H^{1/2}(S^{1})$ if \ $f=\sum_{n}a_{n}\varphi_{n}$ and
\begin{equation}
\left\Vert f\right\Vert _{H^{1/2}(S^{1})}^{2}=\sum_{n=-\infty}^{\infty}\left(
1+\left\vert n\right\vert \right)  \left\vert a_{n}\right\vert ^{2}%
\label{eq:H1/2}%
\end{equation}
is finite. If we denote by $P$ the projection%
\begin{equation}
\left(  P\psi\right)  (z)=\frac{1}{2\pi}\int_{S^{1}}\psi(w)~dS(w),
\label{eq:Pprojection}%
\end{equation}
and let $Q=I-P$, then $P$ and $Q$ are orthogonal projections in $H^{1/2}%
(S^{1})$. We denote by $H^{-1/2}(S^{1})$ the topological dual of
$H^{1/2}(S^{1})$ and by $(~\cdot~,~\cdot~)$ the inner product associated to
the norm (\ref{eq:H1/2}).

Next, we recall some basic facts about the operators $\mathcal{S}_{k}$ and
$\Lambda_{q}$ that appear in the boundary integral equation (\ref{eq:CGO.b}).

First of all, we need some properties of the integral operator $\mathcal{S}%
_{k}$. A reference for this material is the thesis of Siltanen
\cite{Siltanen:1999}. For $k\in\mathbb{C}$, the operator $\mathcal{S}_{k}$ in
(\ref{eq:Sk}) is defined in terms of \emph{Faddeev's Green's function} (setting $z=x+iy$)
\begin{equation}
G_{k}(z)=e^{ikz}g_{k}(z)\label{eq:Gk}%
\end{equation}
where $kz$ is given by (\ref{eq:kx}) and%
\begin{equation}
g_{k}(z)=\frac{1}{(2\pi)^{2}}\int_{\mathbb{R}^{2}}\frac{e^{i\xi\cdot z}}%
{\xi\left(  \overline{\xi}+2k\right)  }d\xi_1d\xi_2.\label{eq:gk}%
\end{equation}
Here $\xi\cdot z = \xi_1 x+\xi_2 y$. In the denominator of (\ref{eq:gk}), $\xi=\xi_{1}+i\xi_{2}$. The function
$g_{k}$ is the convolution kernel for Green's function for the operator $-\frac{1}{4}\left(
\overline{\partial}\left(  \partial+k\right)  \right)  ^{-1}$. The factor
$e^{ikz}$ normalizes $G_{k}$ to be a fundamental solution for $\Delta
=4\partial\overline{\partial}$. Thus%
\begin{equation}
H_{k}(z)=G_{k}(z)-G_{0}(z)\label{eq:Hk}%
\end{equation}
is smooth and harmonic, where%
\[
G_{0}(z)=-\frac{1}{2\pi}\log\left\vert z\right\vert
\]
is the normalized fundamental solution for $-\Delta$. The integral operator
$\mathcal{S}_{k}$ is a bounded operator from $H^{-1/2}(S^{1})$ to
$H^{1/2}(S^{1})$ (see, for example, Lemma 7.1 of Nachman \cite{Nachman:1996}),
but we will need a finer description. The following Lemma is a simple
consequence of \cite{Siltanen:1999}, Theorem 3.2 and following discussion),
and we omit the proof.

\begin{lemma}
The formula%
\begin{equation}\label{H1repr}
H_{k}(z)=H_{1}(kz)-\frac{1}{2\pi}\log\left\vert k\right\vert ,
\end{equation}
holds. Here $H_{1}(~\cdot~)$ is real-valued, smooth, and harmonic, and
\[
H_{1}(0)=-\frac{\gamma}{2\pi},
\]
where $\gamma$ is Euler's constant.
\end{lemma}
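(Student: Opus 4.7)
The proof rests on a scaling identity for $g_k$. In (\ref{eq:gk}) I substitute $\xi = \bar k\eta$ (complex multiplication), whose real-linear Jacobian is $|k|^2$. Routine algebra gives $\bar\xi = k\bar\eta$, so
\[
\xi(\bar\xi + 2k) = \bar k\,\eta(k\bar\eta + 2k) = |k|^2\,\eta(\bar\eta + 2),
\]
while $\xi \cdot z = \mathrm{Re}(\bar\xi z) = \mathrm{Re}(k\bar\eta\, z) = \eta\cdot(kz)$. The two factors of $|k|^2$ cancel, giving $g_k(z) = g_1(kz)$ and hence $G_k(z) = G_1(kz)$. Substituting into $H_k = G_k - G_0$ with $G_0(w) = -\tfrac{1}{2\pi}\log|w|$,
\[
H_1(kz) = G_1(kz) + \tfrac{1}{2\pi}\log|kz| = G_k(z) + \tfrac{1}{2\pi}\log|z| + \tfrac{1}{2\pi}\log|k| = H_k(z) + \tfrac{1}{2\pi}\log|k|,
\]
which rearranges to (\ref{H1repr}).

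Both $G_k$ and $G_0$ solve $-\Delta u = \delta$ in the distributional sense, so $-\Delta H_k = 0$ on all of $\mathbb{R}^2$ (the logarithmic singularities at the origin cancel exactly). By Weyl's lemma $H_k \in C^\infty(\mathbb{R}^2)$ and is harmonic; specializing to $k = 1$ gives the same for $H_1$.

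To verify that $H_1$ is real-valued and to extract $H_1(0)$, I would invoke the explicit exponential-integral representation established in \cite{Siltanen:1999},
\[
G_k(z) = \frac{1}{2\pi}\,\mathrm{Re}\!\left[e^{ikz}E_1(ikz)\right],
\]
from which reality of $G_k$, hence of $H_k$, is manifest. Inserting the classical expansion $E_1(w) = -\gamma - \log w + O(w)$ (principal branch) and $e^{ikz} = 1 + O(|kz|)$ and taking real parts yields
\[
G_k(z) = -\frac{\gamma}{2\pi} - \frac{1}{2\pi}\log|kz| + O(|kz|\log|kz|)
\]
as $kz \to 0$. Setting $k=1$ and letting $z\to 0$ gives $H_1(0) = \lim_{z\to 0}\bigl[G_1(z) + \tfrac{1}{2\pi}\log|z|\bigr] = -\gamma/(2\pi)$.

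The main technical obstacle is to justify the change of variables rigorously, since (\ref{eq:gk}) is only conditionally convergent: the denominator vanishes at $\xi=0$ and $\xi=-2k$, and the integrand decays only like $|\xi|^{-2}$ at infinity. The cleanest remedy is to interpret $g_k$ as a tempered distribution, perform the substitution on a regularized version (for instance after multiplying by a smooth cutoff $\chi_R(\xi)$ that avoids the two poles), and pass to the distributional limit as $R\to\infty$. Alternatively one can bypass the Fourier integral entirely by adopting the exponential-integral representation as the starting point and deriving the scaling directly from classical identities for $E_1$.
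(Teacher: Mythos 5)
The paper does not actually prove this lemma; it cites Siltanen's thesis (Theorem~3.2) and omits the argument, so there is no internal proof to compare against. Your self-contained derivation is correct and is the natural route. The substitution $\xi=\bar k\eta$ (real-linear Jacobian $|k|^2$) gives $g_k(z)=g_1(kz)$, hence $G_k(z)=G_1(kz)$, and (\ref{H1repr}) follows by writing $\log|kz|=\log|k|+\log|z|$; $H_1$ is smooth and harmonic because $G_1$ and $G_0$ are both fundamental solutions for $-\Delta$ and their logarithmic singularities cancel; reality and $H_1(0)=-\gamma/(2\pi)$ drop out of the exponential-integral representation and the classical expansion $E_1(w)=-\gamma-\log w+O(w)$, with the $O(|kz|\log|kz|)$ remainder vanishing in the limit $z\to 0$.

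Your caveat about the convergence of the Fourier integral is legitimate and worth stating: the symbol $1/(\xi(\bar\xi+2k))$ is locally integrable in $\mathbb{R}^2$ but decays only like $|\xi|^{-2}$, so (\ref{eq:gk}) should be read as the inverse Fourier transform of a tempered distribution, under which a linear change of variables is automatic; alternatively one can adopt the $E_1$-representation as the definition of $g_1$ and derive the scaling from classical identities, as you suggest. One detail to pin down if you flesh this out: the prefactor in the exponential-integral formula must be fixed by matching the required $-\tfrac{1}{2\pi}\log|z|$ singularity of a fundamental solution for $-\Delta$, since conventions for $E_1$ versus $\mathrm{Ei}$ differ across sources (the Boiti et al.\ formula quoted later in this very paper uses a different constant and branch), and a mismatched constant would silently corrupt the value of $H_1(0)$.
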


From the Lemma, we immediately conclude:

\begin{lemma}
\label{lemma:Sk}The decomposition%
\begin{equation}
\mathcal{S}_{k}=\mathcal{S}_{0}+\mathcal{H}_{k}-\left(  \log\left\vert
k\right\vert \right)  P \label{eq:Sk.split}%
\end{equation}
holds, where
\begin{equation}
\left(  \mathcal{H}_{k}\psi\right)  (z)=\int_{S^{1}}H_{1}(k(z-z^\prime))\psi
(z^\prime)~dS(z^\prime)\text{.} \label{eq:Hk.bis}%
\end{equation}

\end{lemma}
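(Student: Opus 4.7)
The proof is a direct unpacking of the previous lemma. My plan is to write the kernel $G_k(z-z')$ of $\mathcal{S}_k$ as
\[
G_k(z-z') = G_0(z-z') + H_k(z-z'),
\]
which is simply the definition of $H_k$ in (\ref{eq:Hk}). Then I would substitute the representation (\ref{H1repr}) from the previous lemma, namely
\[
H_k(z-z') = H_1(k(z-z')) - \frac{1}{2\pi}\log|k|,
\]
into the kernel. Linearity of the integral in (\ref{eq:Sk}) splits $\mathcal{S}_k\psi$ into three pieces.

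The first piece is $\int_{S^1} G_0(z-z')\psi(z')\,dS(z') = (\mathcal{S}_0\psi)(z)$ by definition. The second piece is $\int_{S^1} H_1(k(z-z'))\psi(z')\,dS(z') = (\mathcal{H}_k\psi)(z)$ by the definition (\ref{eq:Hk.bis}). The third piece is the only one requiring a moment of thought: since $-\frac{1}{2\pi}\log|k|$ is a constant (independent of both $z$ and $z'$), it factors out of the integral, leaving
\[
-\frac{\log|k|}{2\pi}\int_{S^1}\psi(z')\,dS(z') = -(\log|k|)\,(P\psi)(z),
\]
by the definition (\ref{eq:Pprojection}) of $P$ as averaging over $S^1$. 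Note that $(P\psi)(z)$ is the constant function on $S^1$, consistent with the first two terms being functions of $z\in S^1$.

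There is no real obstacle here: the result is a bookkeeping consequence of the preceding lemma. The only point to highlight is that the logarithmic singularity in $k$ as $|k|\to 0$ is captured exactly by the rank-one operator $(\log|k|) P$, isolating the logarithmic divergence on the constants; this is precisely the structural fact that will be exploited in \S\ref{sec:singular} to extract the rank-one singularity of $(I+T_{k,\lambda})^{-1}$ responsible for the circle of exceptional points.
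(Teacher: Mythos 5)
Your proof is correct and is exactly the omitted ``immediate'' computation the paper has in mind: substitute $G_k = G_0 + H_k$ and then the formula $H_k(z-z') = H_1(k(z-z')) - \tfrac{1}{2\pi}\log|k|$ into the kernel of $\mathcal{S}_k$, and note that the constant $-\tfrac{1}{2\pi}\log|k|$ integrates against $\psi$ to give $-(\log|k|)P\psi$ since $P$ carries the normalizing factor $\tfrac{1}{2\pi}$. Nothing further is needed.
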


\begin{remark}
\label{rem:Sk}By straightforward computation, $\mathcal{S}_{0}P=0$ so that%
\begin{equation}
\left(  \mathcal{S}_{k}P\psi\right)  (z)=\left[  2\pi H_{1}(kz)-\log\left\vert
k\right\vert \right]  \left(  P\psi\right)  (z) \label{eq:SkP}%
\end{equation}
where we have used the mean value property for harmonic functions, while
\begin{equation}
\mathcal{S}_{k}Q=\mathcal{S}_{0}Q+\mathcal{H}_{k}Q. \label{eq:SkQ}%
\end{equation}
We will use this decomposition to analyze the singularities of the operator
$T_{k,\lambda}$ as $\lambda\uparrow0$.
\end{remark}

Next, we need some simple properties of the Dirichlet-to-Neumann map and the
integral operator $T$ in the presence of radial symmetry. First, if $q$ is
radial, the problem (\ref{eq:DP}) can be solved by Fourier analysis on the
circle using the basis (\ref{eq:phi.n}). We set, for $z=re^{i\theta}$,%
\[
\psi(z,k)=\sum_{n=-\infty}^{\infty}\psi_{n}(r)\varphi_{n}(\theta).
\]
To compute the Dirichlet-to-Neumann map, we solve the problem
\begin{align}
(-\Delta +q) \psi =&0 \mbox{ in $B_1$} \notag\\
\left. \psi \right|_{S^1} =& \varphi_n \notag
\end{align}
Writing $\psi = \psi_n(r) \varphi_n(\theta)$ we have
\begin{align}
\label{eq:Dnevp}
-\frac{1}{r}\frac{\partial}{\partial r}\left(  r\frac{\partial\psi_{n}%
}{\partial r}\right)  +\left(  \frac{n^{2}}{r^{2}}+q(r)\right)  \psi_{n}
= &0\\
\psi_{n}(1)   = &1 \notag
\end{align}
It follows that
\begin{equation}
\Lambda_{q}\varphi_{n}=\mu_{n}(q)\varphi_{n}%
\label{eq:Lqevals}
\end{equation}
where%
\[
\mu_{n}(q)=\psi_{n}^{\prime}(1).
\]
Thus $\Lambda_{q}$ has a complete set of orthonormal eigenfunctions and real
eigenvalues, and hence commutes with complex conjugation. Furthermore, it is easy to see that
\begin{equation}\label{eigiden2}
  \mu_{-n} = \mu_n\in \mathbb{R}.
\end{equation}

Let
\begin{equation}\label{mulambdadef}
\mu(\lambda):=\mu_{0}(q_{\lambda}),
\end{equation}
where $q_{\lambda}$ is given by (\ref{eq:q.lambda}). The following fact is crucial.

\begin{lemma}
\label{lemma:mu}Suppose that $\sigma\in \mathcal{C}^\infty(\mathbb{R}^2)$ is a real-valued radial function satisfying $\sigma(z)>0$ for all $z\in\mathbb{R}^2$ and $\sigma-1\in\mathcal{C}_{0}^{\infty}\left(  B_{1}\right)$. Denote $q_{0}:=\sigma^{-1/2}\Delta\sigma^{1/2}\in
\mathcal{C}_{0}^{\infty}\left(  B_{1}\right)$. Let $w\in\mathcal{C}_{0}^{\infty}(B_{1})$ be a nonzero, nonnegative radial function, and define $q_{\lambda}$ by (\ref{eq:q.lambda}). Then $\mu(0)=0$ and $\mu^{\prime}(0)>0.$
\end{lemma}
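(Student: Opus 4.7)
The plan is to identify $\psi_0$ explicitly from the conductivity-type structure of $q_0$, and then compute $\mu'(0)$ by a Hadamard-type integration by parts in the radial variable. For $\mu(0)=0$: by definition $(-\Delta+q_0)\sigma^{1/2}=0$ on $\mathbb{R}^2$, and since $\sigma-1\in\mathcal{C}_0^\infty(B_1)$ we have $\sigma^{1/2}\equiv 1$ in a neighborhood of $S^1$. Thus $\psi_0(r):=\sigma^{1/2}(r)$ is the unique radial solution of (\ref{eq:Dnevp}) at $n=0$, $\lambda=0$ that is regular at $r=0$, and $\mu(0)=\psi_0'(1)=0$ because $\psi_0\equiv 1$ near $r=1$.

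Before turning to $\mu'(0)$ I record the positivity needed to guarantee smooth dependence of $\psi_\lambda$ on $\lambda$. A direct computation using the conductivity gauge $v\mapsto v/\sigma^{1/2}$ yields
$$\int_{B_1}\bigl(|\nabla v|^2+q_0 v^2\bigr)\,dz=\int_{B_1}\sigma\,\bigl|\nabla(v/\sigma^{1/2})\bigr|^2\,dz$$
for every $v\in H^1(B_1)$ (the boundary term on $S^1$ drops because $\partial_\nu\sigma\equiv 0$ there). Combined with $\sigma\geq c>0$ and Poincar\'e's inequality on $H^1_0(B_1)$, this shows the quadratic form with $q_0$ is coercive on $H^1_0(B_1)$. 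Hence $0$ lies outside the Dirichlet spectrum of $-\Delta+q_0$ on $B_1$, and by continuity the same is true of $-\Delta+q_\lambda$ for $|\lambda|$ small; the implicit function theorem then produces a smooth family $\lambda\mapsto\psi_\lambda$.

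Setting $\dot\psi:=\partial_\lambda\psi_\lambda|_{\lambda=0}$, differentiation of (\ref{eq:Dnevp}) gives
$$-\frac{1}{r}(r\dot\psi')'+q_0\dot\psi=-w\psi_0,\qquad \dot\psi(1)=0.$$
Multiplying by $\psi_0\,r$, integrating over $(0,1)$, and integrating by parts twice, the boundary contributions at $r=0$ vanish by regularity, while at $r=1$ only $-\dot\psi'(1)\psi_0(1)=-\mu'(0)$ survives because $\psi_0'(1)=0$ and $\dot\psi(1)=0$; the bulk terms cancel through the equation $-(r\psi_0')'/r+q_0\psi_0=0$. This yields
$$\mu'(0)=\int_0^1 w(r)\,\sigma(r)\,r\,dr=\frac{1}{2\pi}\int_{B_1}w\,\sigma\,dz>0,$$
the strict positivity coming from $w\geq 0$, $w\not\equiv 0$, and $\sigma>0$. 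The only mildly nonroutine ingredient is the coercivity identity above, which leans on the conductivity gauge; everything else is direct computation.
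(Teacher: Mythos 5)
Your proof is correct, but the computation of $\mu'(0)$ follows a genuinely different path from the paper's. The paper works directly with the Dirichlet--to--Neumann quadratic form: it uses the identity $\mu(\lambda)=\langle 1,\Lambda_{q_\lambda}1\rangle=\int_{B_1}\bigl(|\nabla\psi_\lambda|^2+q_\lambda|\psi_\lambda|^2\bigr)\,dz$ and differentiates in $\lambda$, so that the cross terms assemble into $2\langle\dot\psi_\lambda|_{S^1},\Lambda_{q_\lambda}1\rangle=0$ by the fixed boundary condition; this is a Hellmann--Feynman--type argument and does not need the $\mu(0)=0$ fact in the cancellation. You instead differentiate the radial ODE (\ref{eq:Dnevp}) and apply a one-dimensional Green's identity, pairing $\dot\psi$ against $\psi_0$ and integrating by parts twice; the bulk terms cancel via the equation for $\psi_0$ and the surviving boundary term at $r=1$ is exactly $\mu'(0)$. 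Both are standard and give the same positivity conclusion, and your route is arguably more elementary. You also supply a self-contained coercivity argument (the conductivity-gauge identity $\int|\nabla v|^2+q_0v^2=\int\sigma|\nabla(v/\sigma^{1/2})|^2$ on $H^1_0(B_1)$) to justify smooth dependence on $\lambda$, where the paper simply invokes analytic dependence of the ODE solution on the parameter; that extra detail is welcome. One small bookkeeping remark: your constant $\mu'(0)=\frac{1}{2\pi}\int_{B_1}w\sigma\,dz$ is the one consistent with the normalization $\mu(\lambda)=\psi_\lambda'(1)$ in (\ref{mulambdadef}); the paper's stated formula $\mu'(0)=\int_{B_1}w|\psi_0|^2\,dz$ carries an extraneous factor of $2\pi$, since $\langle 1,\Lambda_q 1\rangle=2\pi\mu_0(q)$ rather than $\mu_0(q)$. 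The sign, which is all that matters, is of course unaffected.
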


\begin{proof}
The function $\sigma^{1/2}$ is the unique solution to $\left(  -\Delta+q_{0}\right) \phi = 0$ with $\left. \phi\right\vert _{S^{1}} =1$, and since $\sigma^{1/2}$ is constant in a neighborhood of the boundary we have $\left.\frac{\partial }{\partial\nu}\sigma^{1/2}\right\vert_{S^1}=0$.  It follows that $\mu(0)=0$. The fact that $\mu(\lambda)$ is continuously differentiable in $\lambda$ follows from the fact that the unique solution to the problem (\ref{eq:Dnevp}) with $q$ given by (\ref{eq:q.lambda}) depends analytically on $\lambda$.

To compute $\mu^{\prime}(0)$, let $\psi_{\lambda}$ solve the Dirichlet problem%
\begin{align}
\left(  -\Delta+q_{\lambda}\right)  \psi_\lambda &  =0,\label{eq:DP.lambda}\\
\left.  \psi_\lambda\right\vert _{S^{1}}  &  = 1,\nonumber
\end{align}
and let $\dot{\psi}_{\lambda}=\partial\psi_{\lambda}/\partial\lambda$. Since $\mu(\lambda) = \langle 1,\Lambda_{q_\lambda}1\rangle$, we calculate $\mu'(0)$ by taking the derivative of this pairing:
\begin{align*}
    \mu'(\lambda) &= \frac{d}{d\lambda} \langle 1,\Lambda_{q_\lambda} 1\rangle = \frac{d}{d\lambda}\int_D |\nabla \psi_\lambda|^2+q_\lambda|\psi_\lambda|^2 dz \\
    &=  \int_D 2\nabla \dot{\psi}_\lambda\cdot\nabla \psi_\lambda+2 q_\lambda \dot{\psi}_\lambda \psi_\lambda+ w |\psi_\lambda|^2 dz\\
    &= 2 \left\langle  \right.\dot{\psi}_\lambda\left\vert_{S^1},\Lambda_{q_\lambda} 1\right\rangle + \int_D w |\psi_\lambda|^2 dz.
\end{align*}

We have $\left.\dot \psi_\lambda\right\vert_{S^1} = 0$, so $\mu^{\prime}(0) = \int_D w|\psi_0|^2 dz>0$.

\end{proof}

\section{Singularities of CGO\ Solutions and Scattering Transform} \label{sec:singular}

\noindent
In this section we compute the singularities of solutions to the boundary
integral equation (\ref{eq:CGO.b}). Our computation is based on the following
decomposition of the resolvent $\left(  I+T_{k,\lambda}\right)  ^{-1}$ as a
bounded operator from $H^{1/2}(S^{1})$ to itself. We note that, since $q_{0}$
is of conductivity type, it follows from Theorem 5 in \cite{Nachman:1996} that
the operator $\left(  I+T_{k,0}\right)  $ has a bounded inverse for all
$k\in\mathbb{C}$.

\begin{lemma}
\label{lemma:FR}Suppose that $\sigma\in \mathcal{C}^\infty(\mathbb{R}^2)$ is a real-valued radial function satisfying $\sigma(z)>0$ for all
 $z\in\mathbb{R}^2$ and $\sigma-1\in\mathcal{C}_{0}^{\infty}\left(  B_{1}\right)$. Denote $q_{0}:=\sigma^{-1/2}\Delta\sigma^{1/2}\in
\mathcal{C}_{0}^{\infty}\left(  B_{1}\right)$. Let $w\in\mathcal{C}_{0}^{\infty}(B_{1})$ be a nonnegative and nonzero function, and define $q_{\lambda}$ by (\ref{eq:q.lambda}). Suppose that $T_{k,\lambda}$ is given by (\ref{eq:T}) with
$q=q_{\lambda}$. Then, for all $|\lambda|$ sufficiently small,
there is a rank-one operator $F=F(k,\lambda)$ and a bounded operator
$R=R(k,\lambda)$ so that
\begin{equation}
\left(  I+T_{k,\lambda}\right)  ^{-1}=\left(  I+T_{k,0}+R\right)  ^{-1}\left(
I+\mu F\right)  ^{-1}\label{eq:Tklambda.res}%
\end{equation}
where $\mu=\mu(\lambda)$ is defined in (\ref{mulambdadef}),
\begin{equation}
\sup_{\left\vert k\right\vert \leq K_0}\left\Vert R\right\Vert \leq C(K_0)\left\vert
\lambda\right\vert \label{eq:R}%
\end{equation}
for a positive constant $C$ depending on $q_{0}$ and $w$,
and
\[
F=\mathcal{S}_{k}P\left(  I+T_{k,0}+R\right)  ^{-1}.
\]
The operators $R$ and $F$ commute with complex conjugation.
\end{lemma}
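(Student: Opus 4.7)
The plan is to compute $T_{k,\lambda}-T_{k,0}$ explicitly, extract its rank-one piece along the constants, and then use a Neumann-series argument to factor the resolvent. Since $q_\lambda$ and $q_0$ are both radial, the maps $\Lambda_{q_\lambda}$ and $\Lambda_{q_0}$ are diagonal in the Fourier basis $\{\varphi_n\}$, so they commute with both $P$ and $Q$. Combining this with Lemma \ref{lemma:mu} (which gives $\mu_0(q_0)=0$), we get
$$\Lambda_{q_\lambda}-\Lambda_{q_0} = \mu(\lambda)P + Q(\Lambda_{q_\lambda}-\Lambda_{q_0})Q.$$
Multiplying on the left by $\mathcal{S}_k$ and subtracting $T_{k,0}=\mathcal{S}_k(\Lambda_{q_0}-\Lambda_0)$ from $T_{k,\lambda}=\mathcal{S}_k(\Lambda_{q_\lambda}-\Lambda_0)$ yields
$$T_{k,\lambda} = T_{k,0} + \mu(\lambda)\mathcal{S}_k P + R, \qquad R := \mathcal{S}_k Q\, (\Lambda_{q_\lambda}-\Lambda_{q_0})\,Q.$$

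Next I would establish the norm estimate (\ref{eq:R}). Because the Dirichlet problem (\ref{eq:DP.lambda}) depends analytically on $\lambda$, the map $\lambda\mapsto \Lambda_{q_\lambda}$ into $\mathcal{L}(H^{1/2}(S^1),H^{-1/2}(S^1))$ is analytic near $0$, so $\|\Lambda_{q_\lambda}-\Lambda_{q_0}\|_{H^{1/2}\to H^{-1/2}}=O(|\lambda|)$. Combined with the continuity in $k$ of $\mathcal{S}_k Q:H^{-1/2}(S^1)\to H^{1/2}(S^1)$ — uniform on compact sets by Lemma \ref{lemma:Sk} and Remark \ref{rem:Sk}, the singular $-(\log|k|)P$ term being killed by $Q$ — this yields $\|R\|\le C(K_0)|\lambda|$ on $|k|\le K_0$.

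With $R$ small, the Neumann series shows $(I+T_{k,0}+R)^{-1}$ exists and is a small perturbation of $(I+T_{k,0})^{-1}$, which itself exists for every $k$ by Theorem 5 of Nachman \cite{Nachman:1996} (since $q_0$ is of conductivity type) and is uniformly bounded on $|k|\le K_0$ by continuity of $k\mapsto T_{k,0}$ in operator norm. The factorization (\ref{eq:Tklambda.res}) is then algebraic: setting $F:=\mathcal{S}_k P(I+T_{k,0}+R)^{-1}$ we verify by direct multiplication that
$$I+T_{k,\lambda} = (I+T_{k,0}+R)+\mu(\lambda)\mathcal{S}_k P = (I+\mu F)(I+T_{k,0}+R),$$
so inversion gives (\ref{eq:Tklambda.res}). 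The operator $F$ has rank at most one because $P$ does. The commutation of $R$ and $F$ with complex conjugation follows because $\mathcal{S}_k$ and $\Lambda_q$ (for radial $q$) both do: the former because, by Lemma \ref{lemma:Sk}, its kernel decomposes into pieces with real-valued kernels plus a real scalar multiple of $P$; the latter because it is diagonal in the Fourier basis with real eigenvalues satisfying $\mu_{-n}=\mu_n$ (cf.\ (\ref{eigiden2})).

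The main obstacle is the uniform-in-$k$ estimate on $R$, which requires separating the two ways $\mathcal{S}_k$ can fail to be nicely bounded — its $\log|k|$ blow-up on constants as $k\to 0$ and its behaviour for large $k$ — and using the $Q$ on the left to kill the dangerous $-(\log|k|)P$ term. Once that is secured the remainder of the argument is purely formal manipulation of bounded operators.
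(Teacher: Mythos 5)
Your proof is correct and follows essentially the same route as the paper: you extract the rank-one piece $\mu(\lambda)\mathcal{S}_kP$ via the spectral decomposition of $\Lambda_{q_\lambda}-\Lambda_{q_0}$ in the Fourier basis, define $R$ identically (up to inserting an extra $Q$, which changes nothing since $Q$ commutes with $\Lambda_{q_\lambda}-\Lambda_{q_0}$ and $Q^2=Q$), bound $\|R\|$ by the same two-factor estimate on $\mathcal{S}_kQ$ and $\Lambda_{q_\lambda}-\Lambda_{q_0}$, and obtain the resolvent factorization by the same algebraic identity. The only cosmetic difference is that you fold the Neumann-series invertibility of $I+T_{k,0}+R$ and the explicit check that $(I+\mu F)(I+T_{k,0}+R)=I+T_{k,\lambda}$ into the proof itself, whereas the paper relegates the uniform boundedness of $(I+T_{k,0})^{-1}$ to a subsequent remark.
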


\begin{proof}
Let%
\[
R=\mathcal{S}_{k}\left(  \Lambda_{q_{\lambda}}-\Lambda_{q_{0}}\right)  Q.
\]
Since $Q$ commutes with $\Lambda_{q_{\lambda}}$ and $\Lambda_{0}$ it follows
that
\[
R=\left(  \mathcal{S}_{k}Q\right)  \left(  \Lambda_{q_{\lambda}}%
-\Lambda_{q_{0}}\right)  Q.
\]
From Remark \ref{rem:Sk}, we have
\[
\mathcal{S}_{k}Q=\mathcal{S}_{0}Q+\mathcal{H}_{k}Q
\]
so that $\sup_{\left\vert k\right\vert \leq K_0}\left\Vert \mathcal{S}%
_{k}Q\right\Vert _{H^{-1/2}(S^{1})\rightarrow H^{1/2}(S^{1})}$ is finite. We
then use the fact that%
\[
\left\Vert \Lambda_{q_{\lambda}}-\Lambda_{q_{0}}\right\Vert _{H^{1/2}%
(S^{1})\rightarrow H^{-1/2}(S^{1})}\leq C\left\vert \lambda\right\vert
\]
to conclude that (\ref{eq:R})\ holds. Write%
\[
T_{k,\lambda}=\mu\mathcal{S}_{k}P+\mathcal{S}_{k}Q\left(  \Lambda_{q_{\lambda
}}-\Lambda_{q_{0}}\right)  +\mathcal{S}_{k}Q\left(  \Lambda_{q_{0}}%
-\Lambda_{0}\right)
\]
and use the fact that $T_{k,0}=\mathcal{S}_{k}Q\left(  \Lambda_{q_{0}}%
-\Lambda_{0}\right)  $ (since $\Lambda_{q_{0}}P=\Lambda_{0}P=0$) to conclude
that (\ref{eq:Tklambda.res}) holds. The statement about complex conjugation
follows from the fact that the projections $P$ and $Q$, the operators
$\Lambda_{q_{\lambda}}$, and the operator $\mathcal{S}_{k}$ have the same property.
\end{proof}

\begin{remark}
The fact that $T_{k,0}=\mathcal{S}_{k}Q\left(  \Lambda_{q_{0}}-\Lambda
_{0}\right)  $ implies that $\sup_{\left\vert k\right\vert \leq K_0}\left\Vert
T_{k,0}\right\Vert $ is bounded, and hence that $\sup_{\left\vert k\right\vert
\leq K_0}\left\Vert \left(  I+T_{k,0}\right)  ^{-1}\right\Vert $ is bounded since
$k\mapsto T_{k,0}$ is a bounded continuous operator-valued function for
$\left\vert k\right\vert \leq K_0$ and $\left(  I+T_{k,0}\right)  ^{-1}$ is known
to exist for all $k$ by \cite{Nachman:1996}.
\label{rm:bdd.inv}
\end{remark}

The effect of Lemma \ref{lemma:FR} is to focus attention on the rank-one
operator $F$. If we write%
\[
F\psi=(\varphi,\psi)\chi
\]
a short computation shows that%
\[
\left(  I+\mu F\right)  ^{-1}=I-\mu\frac{(\varphi,~\cdot~)\chi}{1+\mu\left(
\varphi,\chi\right)  }%
\]
while%
\[
D(k,\lambda):=\det\left(  I+\mu F\right)  =1+\mu\left(  \varphi,\chi\right)  .
\]
In our case we have, with $\varphi_{0}$ defined in (\ref{eq:phi.n}),%
\begin{align}
\varphi &  =\left(  I+T_{k,0}^{\ast}+R^{\ast}\right)  ^{-1}\varphi
_{0},\label{eq:phi1}\\
\chi &  =\left[  H_{1}(k~\cdot~)+\log\left(  \left\vert k\right\vert \right)
\right]  \varphi_{0}.\label{eq:psi1}%
\end{align}
We can prove:

\begin{lemma}
For any $K_0>0$ and all $|k|\leq K_0$ and $\lambda$ sufficiently small, the
function $D(k,\lambda)$ is real-valued, smooth in $\lambda$ and $k$
for  $k\neq0$, and radial in $k$.
Moreover, $D(k,\lambda)$ obeys the small-$|\lambda|$ asymptotics%
\begin{equation}
D(k,\lambda)=1+\mu(\lambda)\left(  2\pi h-\log\left\vert k\right\vert \right)
+\mathcal{O}\left(  \left\vert \lambda\right\vert \left\vert k\right\vert
\right)  . \label{eq:D.asy}%
\end{equation}

\end{lemma}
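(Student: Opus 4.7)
The plan is to exploit the rank-one structure $F\psi=(\varphi,\psi)\chi$ and transfer the inverse through the adjoint to write
\[
D(k,\lambda)=1+\mu(\lambda)(\varphi,\chi)=1+\mu(\lambda)\bigl(\varphi_0,\,(I+T_{k,0}+R)^{-1}\chi\bigr),
\]
with $\chi=\mathcal{S}_k\varphi_0$. The asymptotic will hinge on a decisive cancellation: the constant function $\varphi_0$ is annihilated by both $T_{k,0}$ and $R$.

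The three qualitative properties dispatch quickly. Smoothness of $D$ in $(k,\lambda)$ on $(\mathbb{C}\setminus\{0\})\times\{|\lambda|\text{ small}\}$ follows from the smooth dependence of $\mathcal{S}_k$ on $k\neq 0$ guaranteed by Lemma \ref{lemma:Sk}, together with the analytic dependence of $\Lambda_{q_\lambda}$ on $\lambda$ invoked in Lemma \ref{lemma:mu}. Real-valuedness is immediate from Lemma \ref{lemma:FR}: both $R$ and $F$ commute with complex conjugation, so $\varphi$ and $\chi$ may be taken real. For radiality, I would check rotation covariance: setting $\rho_\alpha f(z)=f(e^{-i\alpha}z)$, the identity $G_k(e^{-i\alpha}z)=G_{e^{-i\alpha}k}(z)$ for Faddeev's Green's function yields $\rho_\alpha\mathcal{S}_k\rho_\alpha^{-1}=\mathcal{S}_{e^{-i\alpha}k}$, while $\Lambda_{q_\lambda}$, $P$, and $Q$ commute with $\rho_\alpha$ by radial symmetry of $q_\lambda$. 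Since $\rho_\alpha\varphi_0=\varphi_0$, conjugating the rank-one operator $\mu F$ by $\rho_\alpha$ preserves its determinant and gives $D(k,\lambda)=D(e^{-i\alpha}k,\lambda)$.

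For the asymptotic (\ref{eq:D.asy}), Lemma \ref{lemma:mu} gives $\Lambda_{q_0}\varphi_0=\mu(0)\varphi_0=0$, and $\Lambda_0\varphi_0=0$ since a constant is its own harmonic extension; hence $T_{k,0}\varphi_0=0$. The trailing projection $Q$ in $R=\mathcal{S}_k(\Lambda_{q_\lambda}-\Lambda_{q_0})Q$ from Lemma \ref{lemma:FR} similarly forces $R\varphi_0=0$, so $(I+T_{k,0}+R)^{-1}\varphi_0=\varphi_0$. Next, I decompose $\chi=\mathcal{S}_k\varphi_0$ into its $P$- and $Q$-parts using Remark \ref{rem:Sk} and the mean-value property for the harmonic function $H_1$:
\[
P\chi=(2\pi h-\log|k|)\varphi_0,\qquad Q\chi=2\pi\bigl(H_1(k\,\cdot\,)-h\bigr)\varphi_0.
\]
Since $H_1$ is smooth with $H_1(0)=h$, Taylor expansion in $k$ gives $\|Q\chi\|_{H^{1/2}(S^1)}=\mathcal{O}(|k|)$. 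Combining with the uniform resolvent bound $\|(I+T_{k,0}+R)^{-1}\|=\mathcal{O}(1)$ for $|k|\leq K_0$ and small $\lambda$ (Remark \ref{rm:bdd.inv} and Lemma \ref{lemma:FR}) yields
\[
(I+T_{k,0}+R)^{-1}\chi=(2\pi h-\log|k|)\varphi_0+\mathcal{O}(|k|)\quad\text{in }H^{1/2}(S^1).
\]
Pairing with $\varphi_0$ and multiplying by $\mu(\lambda)=\mathcal{O}(|\lambda|)$ then delivers (\ref{eq:D.asy}).

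The main obstacle is spotting $\varphi_0\in\ker T_{k,0}\cap\ker R$; without this cancellation the bound on the remainder would degrade to $\mathcal{O}(|\lambda|\,|\log|k||)$, far too weak for the subsequent analysis of the zeros of $D(k,\lambda)$. This cancellation ultimately reflects the criticality of the conductivity-type background $q_0$, encoded by $\mu(0)=0$.
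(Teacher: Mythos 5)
Your proof is correct and follows essentially the same route as the paper: you identify $\varphi_0$ as a common fixed vector of $(I+T_{k,0}+R)^{-1}$ via $T_{k,0}\varphi_0=R\varphi_0=0$ (the cancellation tied to $\mu(0)=0$), split $\chi=\mathcal{S}_k\varphi_0$ into the mean-value piece $(2\pi h-\log|k|)\varphi_0$ and an $\mathcal{O}(|k|)$ remainder coming from the smoothness of $H_1$, and obtain radiality and reality from rotation covariance and commutation with conjugation, exactly as the paper does with its $F_1+F_2$ decomposition and $U_\phi$ conjugation. The only cosmetic difference is that you transfer the resolvent across the pairing onto $\chi$ rather than keeping $(I+T_{k,0}^\ast+R^\ast)^{-1}$ acting on $\varphi_0$, which if anything streamlines the bookkeeping.
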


\begin{proof}
From (\ref{eq:phi1})-(\ref{eq:psi1}) we compute $\left(  \varphi,\chi\right)
=F_{1}+F_{2}$, where%
\begin{align}
F_{1}(k,\lambda)  &  =(2\pi h-\log\left\vert k\right\vert )\left(  \varphi
_{0},\left(  I+T_{k,0}+R\right)  ^{-1}\varphi_{0}\right)  ,\label{eq:overlap1}%
\\
F_{2}(k,\lambda)  &  =\left(  \left(  I+T_{k,0}^{\ast}+R^{\ast}\right)
^{-1}\varphi_{0},H_{1}(k~\cdot~)\varphi_{0}\right)  . \label{eq:overlap2}%
\end{align}
In (\ref{eq:overlap1}), $h=H_{1}(0)=-\gamma/2\pi$, and, in (\ref{eq:overlap2}%
), $H(z)=H_{1}(z)-H_{1}(0)$. Since%
\[
\left(  I+T_{k,0}+R\right)  ^{-1}\varphi_{0}=\varphi_{0},
\]
we have%
\[
F_{1}(k,\lambda)=\mu(\lambda)\left(  2\pi h-\log\left\vert k\right\vert
\right)
\]
which is obviously radial in $k$. To see that $F_{2}(k,\lambda)$ is radial in
$k$, we note that if%
\[
C_{k}=T_{k,0}+R
\]
and $U_{\phi}$ is the unitary operator on $H^{1/2}(S^{1})$ given by $\left(
U_{\phi}f\right)  \left(  e^{i\theta}\right)  =f(e^{i(\theta+\phi)})$, we have%
\[
U_{\phi}C_{k}=C_{e^{i\phi}k}U_{\phi}%
\]
and%
\[
U_{\phi}\mathcal{H}_{k}=\mathcal{H}_{e^{i\phi}k}U_{\phi}.
\]
The second identity implies that $H_{1}(kz)\varphi_{0}=\mathcal{H}_{k}%
\varphi_{0}$ satisfies $U_{\phi}\left(  H_{1}(k~\diamond\right)  \varphi
_{0}\left(  \diamond\right)  )=H_{1}(e^{i\phi}k~\diamond)\varphi_{0}$. Using
these identities and the fact that $U_{\phi}\varphi_{0}=\varphi_{0}$ we easily
deduce that $F_{2}(e^{i\phi}k,\lambda)=F_{2}(k,\lambda)$ so that $F_{2}$ is
radial in $k$. \ Finally,
\[
\left\vert F_{2}(k,\lambda)\right\vert \leq C\left\Vert \left(  I+T_{k,0}%
^{\ast}+R^{\ast}\right)  ^{-1}\right\Vert \sup_{\left\vert z\right\vert \leq
 1}\left\vert H\left(  k~\cdot~\right)  \right\vert \leq C\left\vert
k\right\vert ,
\]
which gives the required asymptotics since $\left\vert \mu(\lambda)\right\vert
\leq C\left\vert \lambda\right\vert $ for small $|\lambda|$. The reality of
$D\left(  k,\lambda \right)  $ follows from the fact that $\varphi_{0}$ and
$H(k~\cdot~)$ are real-valued functions and that the operators $T_{k,0}$ and
$R$ commute with complex conjugation. Smooth dependence on the parameters $k$ and $\lambda$ is a consequence of the smooth dependence of $T_{k,\lambda}$ on these parameters.
\end{proof}

\begin{remark}\label{remark:circle}
It follows from the lemma that for $|k|\leq K_0$ and $\lambda$ sufficiently close to zero and
$k\neq0$,
\[
\left\vert \nabla_{k}D(k,\lambda)\right\vert =\frac{\left\vert\mu(\lambda)\right\vert}{\left\vert
k\right\vert }\left(  1+\mathcal{O}(\lambda)\right)
\]
is nonzero, so that the zero set
\[
Z_{\lambda}=\left\{  k\in\mathbb{R}^{2}:k\neq0,~D(k,\lambda)=0\right\}
\]
is locally a smooth curve. By radial symmetry, $Z_{\lambda}$ is actually a
circle of radius $r_{\lambda}$ depending on $\lambda$. We easily deduce from
(\ref{eq:D.asy})\ and Lemma \ref{lemma:mu} that
\begin{equation}
r_{\lambda}\sim\exp\left[  2\pi\left(  h+\frac{1+\mathcal{O}\left(
\lambda\right)  }{2\pi\mu^{\prime}(0)\lambda}\right)  \right].
\label{eq:r.asy}%
\end{equation}
\end{remark}
\begin{corollary}
For $\lambda>0$ small, the exceptional set $\mathcal E$ is empty.
 \label{cor:lam.pos}
\end{corollary}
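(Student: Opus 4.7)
The plan is to combine the factorization from Lemma \ref{lemma:FR} with the asymptotic (\ref{eq:D.asy}). A nonzero $k$ lies in $\mathcal{E}$ exactly when $(I+T_{k,\lambda})$ fails to be invertible, and by (\ref{eq:Tklambda.res}) this requires either $(I+T_{k,0}+R)$ or $(I+\mu F)$ to be singular; since $F$ is rank one, the latter is equivalent to $D(k,\lambda)=0$.

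First I would fix $K_{0}$ large enough that $\mathcal{E}_{\lambda}\subset\{|k|\leq K_{0}\}$ for all $\lambda$ in a neighborhood of zero. This relies on the standard fact (from Faddeev/Nachman scattering theory) that the exceptional set of a smooth compactly supported potential is bounded, uniformly over the family $\{q_{\lambda}\}$ since it is uniformly bounded in any reasonable norm. On $|k|\leq K_{0}$, Remark \ref{rm:bdd.inv} supplies a uniform bound on $\|(I+T_{k,0})^{-1}\|$, and (\ref{eq:R}) gives $\|R\|\leq C(K_{0})|\lambda|$, so $(I+T_{k,0}+R)$ is invertible by Neumann series for $\lambda$ small enough.

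It remains to show $D(k,\lambda)\neq 0$ on $\{0<|k|\leq K_{0}\}$ for $\lambda>0$ small. By Lemma \ref{lemma:mu}, $\mu(\lambda)>0$ for such $\lambda$. I would split the region according to a threshold $\epsilon>0$: on the annulus $\epsilon\leq|k|\leq K_{0}$, both $\mu(\lambda)(2\pi h-\log|k|)$ and the $\mathcal{O}(\lambda|k|)$ error are $\mathcal{O}(\lambda)$, so $D(k,\lambda)\to 1$ uniformly as $\lambda\to 0^{+}$ and is strictly positive for $\lambda$ small; on $0<|k|<\epsilon$, the dominant term $-\mu(\lambda)\log|k|$ is strictly positive, forcing $D(k,\lambda)>1$. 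Combining the two regimes yields $D(k,\lambda)>0$ throughout $\{0<|k|\leq K_{0}\}$, whence $\mathcal{E}_{\lambda}=\emptyset$.

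The conceptual picture behind this argument is that the equation $1+\mu(\lambda)(2\pi h-\log|k|)=0$ gives a candidate circle of radius $\exp(2\pi h+1/\mu(\lambda))$, which diverges to $+\infty$ as $\lambda\to 0^{+}$ since $\mu(\lambda)\to 0^{+}$; the would-be exceptional circle is pushed outside any fixed disk, past the region where the asymptotic applies, and no actual zero of $D$ survives in $\{|k|\leq K_{0}\}$. The main obstacle is the uniform-in-$\lambda$ choice of $K_{0}$ and consistent matching of the two ranges of $|k|$; once those technicalities are arranged, the sign information from $\mu(\lambda)>0$ does all the real work.
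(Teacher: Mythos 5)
Your proposal is correct and follows essentially the same route as the paper: both pass through the factorization of Lemma~\ref{lemma:FR}, reduce the question on a fixed disk $\{|k|\leq K_0\}$ to non-vanishing of $D(k,\lambda)$ via the asymptotic~(\ref{eq:D.asy})--(\ref{eq:r.asy}), and invoke an external large-$|k|$ uniqueness result to rule out exceptional points outside that disk. The one place where you are vaguer than the paper is the claim that $\mathcal{E}_\lambda$ is uniformly bounded ``by a standard fact from Faddeev/Nachman scattering theory''; the paper pins this down by citing Theorem~2.1 of Sylvester--Uhlmann~\cite{S-U:1986}, which says the modified CGO solutions are unique for $|k|\geq C\|(1+|z|^2)^{1/2}q_\lambda\|_{L^\infty}$, and then takes $K_0=\sup_{0\leq\lambda\leq 1}C\|2q_\lambda\|_{L^\infty}$ to get the needed uniformity in $\lambda$. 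Also, your assertion that on $0<|k|<\epsilon$ positivity of $-\mu(\lambda)\log|k|$ ``forces $D>1$'' should be stated with a bit more care: the $\mathcal{O}(\lambda|k|)$ error can be negative, so one should first fix $\epsilon$ small enough (say $\epsilon<e^{2\pi h}$ and with $2\pi h-\log\epsilon$ dominating the error constant, using $\mu(\lambda)\asymp\mu'(0)\lambda$) before concluding $D(k,\lambda)>0$. Your sign analysis of $D$ on the annulus $\epsilon\leq|k|\leq K_0$ is slightly more explicit than the paper, which simply reads off $r_\lambda>K_0$ from~(\ref{eq:r.asy}); the two are equivalent.
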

\begin{proof}
We see from (\ref{eq:r.asy}) that for fixed $K_0$ and small positive $\lambda$, $r_\lambda>K_0$
and thus there are no singularities for $|k|<K_0$. From Theorem 2.1 of \cite{S-U:1986},
 if $|k|\geq C\|(1+|z|^2)^{1/2}q_\lambda\|_{L^{\infty}}$ then the modified CGO solutions $e^{-ikz}\psi(z,k)$,
exist, are unique, and are locally integrable. The function $q_\lambda$ has support in
$|z|<1$, therefore choosing $K_0 = \sup_{0\leq\lambda\leq1} C\|2q_\lambda\|_{L^{\infty}}$
and using (\ref{eq:t}), there are no singularities for $|k|\geq K_0$.
\end{proof}

The purpose of the following lemma is to show that, even when there are no exceptional points of $\mathbf{t}_\lambda(k)$, this does not violate Nachman's result \cite[Theorem 3]{Nachman:1996} because $\mathbf{t}_\lambda(k)$ does not satisfy the small $k$ decay requirement.

\begin{lemma}
\label{lemma.0}
If $(I+T_{k,\lambda})^{-1}$ exists and is bounded for small $k\neq 0$ and $\mu(\lambda) \neq 0$ then
\[\mathbf{t}_\lambda(k) = -\frac{2\pi}{\log\left\vert k \right \vert }+\mathcal O(k)\]
\end{lemma}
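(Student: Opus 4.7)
The plan is to substitute the rank-one decomposition of $(I+T_{k,\lambda})^{-1}$ from Lemma \ref{lemma:FR} into the boundary integral representation (\ref{eq:t.b}) of the scattering transform, and isolate the logarithmically singular contribution coming from $1/D(k,\lambda)$ as $k\to 0$. Using $(I+\mu F)^{-1}=I-\mu D^{-1}(\varphi,\cdot)\chi$ together with Lemma \ref{lemma:FR}, the CGO boundary trace is
\[
\psi|_{S^1} = (I+T_{k,0}+R)^{-1}e^{ikz} - \frac{\mu(\lambda)\,(\varphi,e^{ikz})}{D(k,\lambda)}\,(I+T_{k,0}+R)^{-1}\chi,
\]
which decomposes $\mathbf{t}_\lambda(k) = A(k) - \mu(\lambda)B(k)C(k)/D(k,\lambda)$, where $A,B,C$ are the corresponding boundary integrals and scalar products.

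To evaluate $A,B,C$ for small $k$, I would exploit the following facts: (i) $T_{k,\lambda}\varphi_0 = \mu(\lambda)[2\pi H_1(kz)-\log|k|]\varphi_0$ by a direct calculation from Remark \ref{rem:Sk}, so that $T_{k,0}\varphi_0 = 0$; (ii) $R\varphi_0 = 0$ because $Q\varphi_0 = 0$, whence $(I+T_{k,0}+R)^{-1}\varphi_0 = \varphi_0$; (iii) $\Lambda_{q_\lambda}\varphi_0 = \mu(\lambda)\varphi_0$ and $\Lambda_0\varphi_0 = 0$. Combined with $e^{ikz}|_{S^1} = 1 + O(k)$ and $\chi = (2\pi h - \log|k|)\varphi_0 + O(k)$ in $H^{1/2}(S^1)$ (the $O(k)$ coming from the smooth deviation of $H_1(kz)$ from $H_1(0)=h$), and with the uniform boundedness of $(I+T_{k,0}+R)^{-1}$ from Remark \ref{rm:bdd.inv}, these identities yield
\[
A(k) = 2\pi\mu(\lambda)+O(k),\quad B(k) = \sqrt{2\pi}+O(k),\quad C(k) = \sqrt{2\pi}\,\mu(\lambda)(2\pi h-\log|k|)+O(k).
\]

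Assembling these and using (\ref{eq:D.asy}) in the form $D(k,\lambda)-\mu(\lambda)(2\pi h-\log|k|) = 1+O(\lambda k)$ collapses the logarithmic cross-terms:
\[
\mathbf{t}_\lambda(k) = 2\pi\mu(\lambda)\cdot\frac{D(k,\lambda)-\mu(\lambda)(2\pi h-\log|k|)}{D(k,\lambda)} + O(k) = \frac{2\pi\mu(\lambda)}{D(k,\lambda)} + O(k).
\]
Since $\mu(\lambda)\neq 0$, factoring $-\mu(\lambda)\log|k|$ from the denominator and expanding the resulting geometric series yields $2\pi\mu(\lambda)/D(k,\lambda) = -2\pi/\log|k| + O(k)$, which is the desired conclusion.

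The main obstacle is the careful bookkeeping of several species of asymptotic corrections that coexist: $O(k)$ errors in $H^{1/2}$ from the Taylor expansion of $e^{ikz}$ and from the smooth part of $H_1(kz)$; scalar logarithmic factors $\log|k|$ produced by the $\mathcal{S}_kP$ piece; and $O(\lambda k)$ cross-terms in $D$. These interact multiplicatively through the product $BC/D$, and one must use the rank-one/rank-zero splitting enforced by $P$ and $Q$, together with the fact that the logarithmic divergence of $\mathcal{S}_k$ is captured entirely by $\mathcal{S}_kP$, to ensure that every sub-leading term is genuinely absorbable into the final remainder. The hypothesis $\mu(\lambda)\neq 0$ is essential at the last step to guarantee that $1/D$ stays comparable to $-1/(\mu\log|k|)$ rather than dominating the corrections.
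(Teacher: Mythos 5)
Your argument is correct, but it routes through heavier machinery than the paper actually uses. The paper's proof is considerably more direct: since $(\Lambda_{q_\lambda}-\Lambda_0)1 = \mu(\lambda)\cdot 1$ and $\mathcal{S}_k 1 = \mathcal{S}_kP1 = 2\pi H_1(kz)-\log|k| = (2\pi h-\log|k|)+\mathcal{O}(k)$ by (\ref{eq:SkP}), one has $(I+T_{k,\lambda})1 = 1+\mu(\lambda)(2\pi h-\log|k|)+\mathcal{O}(k)$, so $(I+T_{k,\lambda})^{-1}1$ is read off by a single scalar division, and substituting into (\ref{eq:t.b}) with $e^{\pm ikz}=1+\mathcal{O}(k)$ finishes. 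You instead import the full factorization $(I+T_{k,\lambda})^{-1}=(I+T_{k,0}+R)^{-1}(I+\mu F)^{-1}$ from Lemma \ref{lemma:FR}, break $\mathbf{t}_\lambda$ into $A-\mu BC/D$, evaluate each piece asymptotically, and recombine using (\ref{eq:D.asy}). This lands on the same expression $2\pi\mu(\lambda)/D(k,\lambda)+\mathcal{O}(k)$, and all the intermediate claims check out (in particular $R\varphi_0=0$ via $Q\varphi_0=0$, $T_{k,0}\varphi_0=0$ via $\mu(0)=0$, and the exactness $\int_{S^1}e^{i\bar k\bar z}\varphi_0\,dS=\sqrt{2\pi}$ that keeps $\log|k|\cdot\mathcal{O}(k)$ cross-terms under control once divided by $D$). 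What the rank-one route buys is a uniform framework also used in the later singularity analysis, at the cost of extra bookkeeping; what the paper's route buys is brevity and transparency, since none of $F$, $R$, $\varphi$, $\chi$, $D$ are actually needed to analyze the constant mode. One shared imprecision worth flagging: the final step $2\pi\mu/[1+\mu(2\pi h-\log|k|)]+\mathcal{O}(k) = -2\pi/\log|k|+\mathcal{O}(k)$ is stated identically in the paper, but expanding that fraction in inverse powers of $\log|k|$ produces a remainder of order $1/\log^2|k|$, which dominates $\mathcal{O}(k)$ as $k\to 0$; you have inherited this, so it is not a defect of your argument relative to the source, but it is worth being aware that the asserted error rate is not literally $\mathcal{O}(k)$.
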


\begin{proof}
We expand $e^{ikz}$ and $e^{i\bar k \bar z}$ for small $k$ as $1+\mathcal O(|k|)$ and then calculate  $(I+T_{k,\lambda})^{-1}$ explicitly.
\begin{align}
 \mathbf{t}_\lambda(k) &= \int_{S^1} e^{i\bar k \bar z} (\Lambda_{q_\lambda}-\Lambda_0)(I+T_{k,\lambda})^{-1} e^{ikz} \,dS \notag\\
&= \int_{S^1} (1+\mathcal O (k)) (\Lambda_{q_\lambda}-\Lambda_0)(I+T_{k,\lambda})^{-1}(1+\mathcal O (k))\,dS \notag\\
&= \int_{S^1} (\Lambda_{q_\lambda}-\Lambda_0)(I+T_{k,\lambda})^{-1}1\,dS + \mathcal O(k) \label{eq:tksmall}
\end{align}
Expanding $(I+T_k)1$
 using (\ref{eq:Sk.split}) gives us
\begin{align*}
(I+T_{k_\lambda})1&=[I+(\mathcal{S}_{0}+\mathcal{H}_{k}-\left(  \log\left\vert
k\right\vert \right)  P)(\Lambda_{q_\lambda}-\Lambda_0)]1\\
 &=1+ \mu(\lambda)(2\pi h -\log\left\vert k\right\vert),
\end{align*}
and so $(I+T_{k,\lambda})^{-1}1 = 1/[1+\mu(\lambda)(2\pi h-\log\left\vert k \right\vert)]$. Applying this to (\ref{eq:tksmall}) gives
\[\mathbf{t}_\lambda(k) = \frac{2\pi \mu(\lambda)}{1+\mu(\lambda)(2\pi h - \log\left \vert k \right \vert)}+\mathcal O (k) = -\frac{2\pi}{\log\left\vert k \right\vert}+\mathcal O (k).\]

\end{proof}
For small $\lambda\neq 0$, we see the assumptions of the lemma are satisfied. Remark (\ref{rm:bdd.inv}) gives us that $(I+T_{k,\lambda})^{-1}$ is bounded, and lemma (\ref{lemma:mu}) gives us that $\mu(\lambda)\neq 0$. Therefore we have that \[\left\vert \mathbf{t}_\lambda(k)\right \vert = \left\vert\frac{-2\pi}{\log\left\vert k \right\vert}+\mathcal O(k)\right\vert> c\left\vert k \right \vert^{\epsilon}\]
for all $c,\epsilon>0$ and $k$ small, which shows that $\mathbf{t}_\lambda(k)$ does not decay fast enough for Nachman's Theorem 3 to apply.

Next, we show:

\begin{lemma}
There is a $\lambda_{0}>0$ so that, for all $\lambda$ with $0<-\lambda < \lambda_0$ and any $k_{c}\in Z_{\lambda}$
\[
\lim_{\substack{k\rightarrow k_{c}\\k\notin Z_{\lambda}}}\left\vert
\mathbf{t}_{\lambda}(k)\right\vert =\infty.
\]

\end{lemma}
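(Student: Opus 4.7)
The plan is to isolate the $1/D(k,\lambda)$ factor from Lemma \ref{lemma:FR} inside the boundary formula (\ref{eq:t.b}), and to verify that the accompanying residue at $k_c$ is nonzero. Setting $e_k(z):=e^{ikz}$, the closed form $(I+\mu F)^{-1}=I-\mu(\lambda)(\varphi,\,\cdot\,)\chi/D(k,\lambda)$ combined with Lemma \ref{lemma:FR} gives
\[
(I+T_{k,\lambda})^{-1}e_k \;=\; \psi_{\mathrm{reg}} \;-\; \frac{\mu(\lambda)(\varphi, e_k)}{D(k,\lambda)}\,\eta,
\]
where $\psi_{\mathrm{reg}}:=(I+T_{k,0}+R)^{-1}e_k$ and $\eta:=(I+T_{k,0}+R)^{-1}\chi$. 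By Remark \ref{rm:bdd.inv} both are uniformly bounded in $H^{1/2}(S^1)$ for $|k|\le K_0$ and small $|\lambda|$, and depend smoothly on $k$ away from the origin. Inserting this into (\ref{eq:t.b}) yields
\[
\mathbf{t}_\lambda(k) \;=\; A(k,\lambda) \;-\; \frac{\mu(\lambda)(\varphi, e_k)\,B(k,\lambda)}{D(k,\lambda)},
\]
with $A(k,\lambda):=\int_{S^1} e^{i\bar k\bar z}(\Lambda_{q_\lambda}-\Lambda_0)\psi_{\mathrm{reg}}\,dS$ and $B(k,\lambda):=\int_{S^1} e^{i\bar k\bar z}(\Lambda_{q_\lambda}-\Lambda_0)\eta\,dS$, both smooth and uniformly bounded in a neighborhood of any $k_c$ with $|k_c|\le K_0$.

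By Remark \ref{remark:circle}, $D(k,\lambda)\to 0$ with nonvanishing gradient as $k\to k_c$, so it suffices to show the residue $\mu(\lambda)(\varphi,e_{k_c})\,B(k_c,\lambda)$ does not vanish. Since (\ref{eq:r.asy}) forces $r(\lambda)=|k_c|\to 0$ as $\lambda\uparrow 0$, we may work in the small-$|k|$ regime. From the proof of Lemma \ref{lemma:FR} one has $(I+T_{k,0}+R)\varphi_0=\varphi_0$, so $(I+T_{k,0}+R)^{-1}(1)=1$. Expanding $e_{k_c}=1+O(r(\lambda))$ in $H^{1/2}(S^1)$ yields $(\varphi,e_{k_c})=(\varphi_0,1)+O(r(\lambda))=\sqrt{2\pi}+o(1)\ne 0$.

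For $B(k_c,\lambda)$, use the mean-value identity for the harmonic function $H_1$ to write $\chi=(2\pi h-\log|k|)\varphi_0+O(|k|)$ in $H^{1/2}(S^1)$. Since $(I+T_{k,0}+R)^{-1}$ fixes the $\varphi_0$-direction and $(\Lambda_{q_\lambda}-\Lambda_0)\varphi_0=\mu(\lambda)\varphi_0$,
\[
B(k_c,\lambda) \;=\; \sqrt{2\pi}\,\mu(\lambda)\bigl(2\pi h-\log|k_c|\bigr) \;+\; O(r(\lambda)).
\]
The defining relation $D(k_c,\lambda)=0$, together with (\ref{eq:D.asy}), forces $\mu(\lambda)(2\pi h-\log|k_c|)=-1+O(|\lambda|r(\lambda))$, so $B(k_c,\lambda)=-\sqrt{2\pi}+o(1)\ne 0$ for $|\lambda|$ small. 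The full residue is then $\sim -2\pi\mu(\lambda)\ne 0$, while $A$ is bounded and $D\to 0$, forcing $|\mathbf{t}_\lambda(k)|\to\infty$.

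The main obstacle is this residue calculation: the two competing limits $\mu(\lambda)\to 0$ and $\log|k_c|\to-\infty$ must cancel precisely via the defining equation $D(k_c,\lambda)=0$, and one must verify that neither the $O(|k|)$ correction to $\chi$ nor the off-diagonal action of $(I+T_{k,0}+R)^{-1}$ spoils this cancellation. The contraction $r(\lambda)\to 0$ as $\lambda\uparrow 0$, combined with the uniform bounds from Remark \ref{rm:bdd.inv}, is what keeps these small-$|k|$ asymptotics tractable.
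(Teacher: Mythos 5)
Your proof is correct and follows essentially the same route as the paper: you use the factorization $(I+T_{k,\lambda})^{-1}=(I+T_{k,0}+R)^{-1}(I+\mu F)^{-1}$ from Lemma \ref{lemma:FR} to isolate the pole $1/D(k,\lambda)$ inside (\ref{eq:t.b}), and then verify that the two factors multiplying it (the overlap $(\varphi,e_k)$ and the boundary integral $B$) stay bounded away from zero by exploiting $(I+T_{k,0}+R)^{-1}\varphi_0=\varphi_0$, $(\Lambda_{q_\lambda}-\Lambda_0)\varphi_0=\mu(\lambda)\varphi_0$, and the cancellation $\mu(\lambda)(2\pi h-\log|k_c|)\to -1$ coming from $D(k_c,\lambda)=0$, which is precisely how the paper establishes its conditions (\ref{eq:liminf1}) and (\ref{eq:liminf2}). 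One small slip: in the estimate $(\varphi,e_{k_c})=\sqrt{2\pi}+o(1)$ the dominant correction is $\mathcal{O}(\lambda)$, coming from $\|\varphi-\varphi_0\|=\mathcal{O}(\lambda)$, not the exponentially smaller $\mathcal{O}(r(\lambda))$ you wrote; this does not affect the conclusion.
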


\begin{proof}
From the formula%
\[
\psi(z,k)=\left(  I+T_{k,\lambda}\right)  ^{-1}\left(  e^{ik\left(
\diamond\right)  }\right)  (z)
\]
we compute%
\begin{align}
\label{eq:psiform}
\psi(z,k)  &  =\left[  \left(  I+T_{k,0}+R\right)  ^{-1}\left(  I+\mu
F\right)  ^{-1}\left(  e^{ik\left(  \diamond\right)  }\right)  \right]  (z)\\
 = & \left[  \left(  I+T_{k,0}+R\right)  ^{-1}\left(  I-\frac{\mu(\lambda
)}{D(k,\lambda)}\left(  \varphi,e^{ik\left(  \diamond\right)  }\right)
\chi(\diamond)\right)  \right]  (z) \notag \\
= &\frac{\mu(\lambda)\left(  \varphi,e^{ik\left(  \diamond\right)  }\right)
}{D(k,\lambda)}\left[  \left(  I+T_{k,0}+R\right)  ^{-1}\chi\right]
(z)+R(z,k) \notag
\end{align}
where $R(z,k)$ is regular in $z,k$. If $\lambda<0$ and $k_{0}\in
Z_{\lambda}$, the zero set of $D(k,\lambda)$, we have
\[
\lim_{k\rightarrow k_{c}}\left\vert \frac{1}{D(k,\lambda)}\right\vert
=\infty\text{.}%
\]
Formula (\ref{eq:t.b}) and the fact that $\mu(\lambda)$ is nonzero for small nonzero $\lambda$ imply that, to show
\[
\lim_{k\rightarrow k_{c}}\left\vert \mathbf{t}_{\lambda}(k)\right\vert
=\infty,
\]
it suffices to show that%
\begin{equation}
\underset{k\rightarrow k_{c}}{\lim\inf}\left\vert \left(  \varphi,e^{ik\left(
\diamond\right)  }\right)  \right\vert >0 \label{eq:liminf1}%
\end{equation}
and%
\begin{equation}
\underset{k\rightarrow k_{c}}{\lim\inf}\left\vert \int_{S^{1}}e^{i\overline
{k}\overline{z}}\left[  \left(  \Lambda_{q}-\Lambda_{0}\right)  \left(
I+T_{k,0}+R\right)  ^{-1}\chi\right]  (z)~dS(z)\right\vert >0
\label{eq:liminf2}%
\end{equation}
where $\varphi$ and $\chi$ are given respectively by (\ref{eq:phi1}) and
(\ref{eq:psi1}). Now%
\begin{align*}
\varphi &  =\left(  I+T_{k,0}^{\ast}+R\right)  ^{-1}\varphi_{0}\\
&  =\left(  I+T_{k,0}^{\ast}\right)  ^{-1}\varphi_{0}+\mathcal{O}\left(
\lambda\right)  .
\end{align*}
Writing%
\[
\left(  I+T_{k,0}^{\ast}\right)  ^{-1}\varphi_{0}=\varphi_{0}-\left(
I+T_{k,0}^{\ast}\right)  ^{-1}T_{k,0}^{\ast}\varphi_{0},
\]
using the fact that
\begin{align*}
T_{k,0}^{\ast}\varphi_{0}  &  =\left(  \Lambda_{q_{0}}-\Lambda_{0}\right)
Q\mathcal{S}_{k}\varphi_{0}\\
&  =\left(  \Lambda_{q_{0}}-\Lambda_{0}\right)  Q\left(  H(k~\cdot~) \right)
\end{align*}
(see (\ref{eq:Sk.split})) satisfies%
\[
\left\Vert T_{k,0}^{\ast}\varphi_{0}\right\Vert \leq C\left\vert k\right\vert
\text{,}%
\]
and using the fact that $\left\vert k_{c}\right\vert \sim Ce^{c/\lambda}$, we
conclude that, for $\lambda<0$ small, $\varphi=\varphi_{0}+\mathcal{O}\left(  \lambda\right)  $, that
$e^{ik_{c}z}\sim1+\mathcal{O}\left(  e^{c/\lambda}\right) $ ($c$ is a positive constant), and hence that
(\ref{eq:liminf1}) holds.

To prove (\ref{eq:liminf2}), first note that
\begin{align*}
\chi &  =\left(  H(k~\cdot~)+\log\left\vert k\right\vert \right)  \varphi
_{0}\\
&  \sim-\frac{1+\mathcal{O}\left(  \lambda\right)  }{\mu(\lambda)}\varphi
_{0}+\mathcal{O}\left(  e^{c/\lambda}\right)
\end{align*}
since $H$ is smooth and $\left\vert k\right\vert \sim e^{c/\lambda}$. Next,
note that $\left(  I+T_{k,0}+R\right)  ^{-1}\varphi_{0}=\varphi_{0}$ so that
finally%
\begin{multline*}
\left\vert \int_{S^{1}}e^{i\overline{k}\overline{z}}\left[  \left(
\Lambda_{q}-\Lambda_{0}\right)  \left(  I+T_{k,0}+R\right)  ^{-1}\chi\right]
(z)~dS(z)\right\vert \\
\geq\left\vert \int_{S^{1}}e^{i\overline{k}\overline{z}}\varphi_{0}%
~dS(z)\right\vert +\mathcal{O}\left(  e^{c/\lambda}\right)
\end{multline*}
which shows that (\ref{eq:liminf2})\ holds. This proves the lemma.
\end{proof}

\begin{proof}
[Proof of Theorem \ref{thm:main}]We have already shown that, for all
sufficiently small negative $\lambda$, the scattering transform $\mathbf{t}%
_{\lambda}$ is singular on a circle of radius $r_{\lambda}$ with the
asymptotic behavior (\ref{eq:r.asy}), and for small positive $\lambda$ the set $\mathcal E$ is empty by Corollary \ref{cor:lam.pos}.  The behavior of $\mathbf{t}_\lambda(k)$ near $k=0$ is given by Lemma \ref{lemma.0}.  It remains to show that $\mathbf{t}%
_{\lambda}$ is smooth elsewhere. This follows from the fact that $D(k,\lambda)$ is smooth and nonzero away from the singular circle, the formula (\ref{eq:psiform}), and the explicit formula for $\mathbf{t}_\lambda$.
\end{proof}

\section{Computational methods}  \label{sec:numerics}

\subsection{Evaluating eigenvalues of DN maps}\label{sec:numericalevals}

Given a radial potential $q$, we wish to compute numerically the eigenvalues $\mu_n(q)$ defined in (\ref{eq:Lqevals}). The straightforward approach would be this: use the finite element method (FEM) to solve the Dirichlet problem (\ref{eq:DP}) with $f=\varphi_n$. Then evaluate $\Lambda_q\varphi$ directly from (\ref{def:Lambda_q}) by numerical differentiation of the FEM solution. We will actually compute $\mu_0(q)$ in this way, but for $\mu_n(q)$ with $n\neq 0$ we can avoid the instability of numerical differentiation as explained below.

Consider the Neumann problem
\begin{equation}\label{NeumProb}
  (-\Delta + q)u=0 \mbox{ in }\Omega, \qquad \frac{\partial u}{\partial\nu}|_{\partial\Omega}=g,
\end{equation}
where the mean value of $g\in H^{-1/2}(\partial\Omega)$ is zero. Define the Neumann-to-Dirichlet map by
$$
 \mathcal{R}_q g:= u|_{\partial\Omega},
$$
where the condition $\int_{\partial\Omega} u|_{\partial\Omega} \,ds = 0$.
The functions $\varphi_n$ are eigenfunctions for the ND map: $\mathcal{R}_q\varphi_n = \nu_n(q)\varphi_n$.

Taking $f=\varphi_n$ with $n\not=0$ in (\ref{eq:DP}) results in
\begin{equation}\label{DirProbB}
  (-\Delta + q)u_n=0 \mbox{ in }\Omega, \qquad u_n|_{\partial\Omega}=\varphi_n,
\end{equation}
and we know that
$$
 \frac{\partial u_n}{\partial\nu}|_{\partial\Omega} = \Lambda_q\varphi_n=\mu_n(q)\varphi_n.
$$
Consider the Neumann problem
\begin{equation}\label{NeumProbB}
  (-\Delta + q)u_n=0 \mbox{ in }\Omega, \qquad \frac{\partial u_n}{\partial\nu}|_{\partial\Omega}=\mu_n(q)\varphi_n.
\end{equation}
Now since the solution $u_n$ is the same in (\ref{DirProbB}) and (\ref{NeumProbB}) we see that $\mathcal{R}_q(\mu_n(q)\varphi_n)=\varphi_n$. If $\mu_n(q)\not=0$, by linearity we get the following connection between the eigenvalues of the  {\sc dn}  and {\sc nd} maps:
\begin{equation}\label{Nevals}
  \nu_n(q)\varphi_n = \mathcal{R}_q\varphi_n = \frac{1}{\mu_n(q)}\varphi_n.
\end{equation}
Equation (\ref{Nevals}) provides us with a stable way to compute $\mu_n(q)$ using the finite element method for the solution of (\ref{NeumProb}), since there is no numerical differentiation involved in the evaluation of $\nu_n(q)$.

\subsection{Solution of the boundary integral equation}

\noindent
We explain how to solve equation (\ref{eq:CGO.b}) approximately by numerical computation. We follow the method described in \cite{Knudsen2009}. The trick is to write the integral equation approximately as a matrix equation on the truncated Fourier series domain.

Choose $N>0$. We represent a function $f\in H^s(\partial\Omega)$ approximately by the truncated Fourier series vector
$$
  \widehat{f}:=
\left[\!\begin{array}{l}
  \widehat{f}(-N)\\
  \widehat{f}(-N+1)\\
  \vdots\\
  \widehat{f}(0)\\
  \vdots\\
  \widehat{f}(N-1)\\
  \widehat{f}(N)
\end{array}\!\right],
$$
where the Fourier coefficients are defined for $-N\leq n \leq N$ by
$$
 \widehat{f}(n) := \langle f,\varphi_n\rangle =  \frac{1}{\sqrt{2\pi}}\int_0^{2\pi} f(\theta)\, e^{-in\theta}\, d\theta.
$$
By standard Fourier series theory we get for well-behaved $f$
$$
  f(\theta) \approx \sum_{n=-N}^N  \widehat{f}(n) \varphi_n(\theta).
$$

Our goal is to approximate the operator $T=\mathcal{S}_{k}\left(  \Lambda_{q}-\Lambda_{0}\right)$ using a matrix acting on the truncated Fourier basis.

We know analytically that $\Lambda_0\varphi_n=|n|\varphi_n$, so the $(2N+1)\times(2N+1)$ matrix representing the operator $\Lambda_0$ is
\begin{equation}\label{L0matrix}
  \mathbf{L}_0:= \mbox{diag}[N,N-1\dots,2,1,0,1,2,\dots,N-1,N].
\end{equation}
Likewise, the {\sc dn} map $\Lambda_q$ can be represented by a diagonal matrix containing the eigenvalues $\mu_{n}(q)$ defined in
(\ref{eq:Lqevals}):
\begin{equation}\label{Lqmatrix}
  \mathbf{L}_q:=\mbox{diag}[\mu_{N}(q),\mu_{N-1}(q),\dots,\mu_{1}(q),\mu_{0}(q),\mu_{1}(q),\dots,\mu_{N-1}(q),\mu_{N}(q)].
\end{equation}
Note that in (\ref{Lqmatrix}) we made use of the symmetry (\ref{eigiden2}).

By Lemma \ref{lemma:Sk} we can write $\mathcal{S}_{k}=\mathcal{S}_{0}+\mathcal{H}_{k}-\left(  \log\left\vert
k\right\vert \right)  P$.  In our case of $\Omega$ being the unit disc, the standard single layer operator $\mathcal{S}_{0}$ has the matrix
$$
  \mathbf{S}_0=\frac 12 \mbox{diag}[\frac{1}{N},\frac{1}{N-1}\dots,\frac 12,1,0,1,\frac 12,\dots,\frac{1}{N-1},\frac{1}{N}].
$$
Furthermore, the projection operator $P$ defined in (\ref{eq:Pprojection}) can be represented by
$$
  \mathbf{P}=\mbox{diag}[0,\dots,0,1,0,\dots,0].
$$

It remains to find a matrix $\mathbf{H}_k$ for the operator $\mathcal{H}_k$. We define the elements of $\mathbf{H}_k=[\mathbf{H}_k(m,n)]$
by
\begin{equation}\label{matrixeq}
   \mathbf{H}_k(m,n) := \langle \mathcal{H}_k\varphi_n,\varphi_m\rangle=\frac{1}{2\pi}\int_0^{2\pi} (\mathcal{H}_k e^{in\theta})\, e^{-im\theta}\, d\theta.
\end{equation}
Here $m\in\{-N,\dots,N\}$ is the row index and $n\in\{-N,\dots,N\}$ is the column index. The function $\mathcal{H}_k e^{in\theta}$ can be evaluated numerically by applying a quadrature rule to the integral in (\ref{eq:Hk.bis}). For this we need to be able to compute point values of $H_k(z)$. By (\ref{H1repr}) and (\ref{eq:Hk}) and (\ref{eq:Gk}) we can write
\begin{equation}  \label{Hkformula3}
  H_k(z) =
  H_1(kz)-\frac{\log|k|}{2\pi}
  =
  e^{ikz}g_1(kz)-G_0(kz)-\frac{\log|k|}{2\pi}.
\end{equation}
Now the evaluation of $H_k(z)$ is reduced to computing Faddeev's fundamental solution $g_1(z)$, since everything else is explicit in the right hand side of (\ref{Hkformula3}). Following \cite[(3.10)]{boiti}, that can be done simply using formula
\begin{equation}
  g_1(z) = \frac{1}{4\pi}e^{-iz}\mbox{Re}(\mbox{Ei}(iz)),
\end{equation}
where Ei stands for the exponential-integral special function whose implementation is readily available in mathematical software packages. As explained in \cite{Siltanen:1999}, one can avoid evaluating the functions $g_1(z)$ and $G_0(z)$ in (\ref{Hkformula3}) near the singularity at $z=0$ by calculating the harmonic function $H_k(z)$ first on a circle $|z|=R$ enclosing the evaluation domain, and then using the classical Poisson kernel to calculate $H_k(z)$ for $|z|<R$.

Approximate solution of (\ref{eq:CGO.b}) is now given in the frequency domain by
\begin{eqnarray}\label{def:psiBIE}
  \widehat{\psi|_{\Omega}}
  &=& \nonumber
  [I+\mathbf{S}_k(\mathbf{L}_q-\mathbf{L}_0)]^{-1}(\widehat{e^{ikz}|_{\partial\Omega}})\\
  &=& \label{psiFouriersol}
  [I+(\mathbf{S}_0+\mathbf{H}_k-(\log|k|)\mathbf{P})(\mathbf{L}_q-\mathbf{L}_0)]^{-1}(\widehat{e^{ikz}|_{\partial\Omega}}),
\end{eqnarray}
where we used the decomposition (\ref{eq:Sk.split}), and $\widehat{e^{ikz}|_{\partial\Omega}}$ stands for the Fourier expansion of $e^{ikz}$, calculated as follows. Write $z=e^{i\theta}$ and compute as in \cite[Section 2]{DM} to get
$$
 e^{ikz}=\sum_{n=-\infty}^{\infty}a_n(k)e^{in\theta} \quad \mbox{with} \quad
a_n(k)=
\left \{ \begin{array}{cl}
\frac{(ik)^n}{n!}, & n\geq 0, \\ \\
0, & n<0.
\end{array}\right.
$$
The vector $\widehat{e^{ikz}|_{\partial\Omega}}$ thus takes the explicit form
$$
  \widehat{e^{ikz}|_{\partial\Omega}}=
\sqrt{2\pi}
\left[\!\!\begin{array}{c}
  0\\
  0\\
  \vdots\\
  0\\
  1\\
  ik\\
  -k^2/2\\
  \vdots\\
  (ik)^N/N!
\end{array}\!\!\right],
$$

Recall now the infinite-precision formula
$$
  \mathbf{t}(k) = \int_{\partial\Omega} e^{i\overline{k}\overline{z}}(\Lambda_q-\Lambda_0)\psi(\,\cdot\,,k)\,ds.
$$
Once the Fourier coefficient vector $\widehat{\psi|_{\Omega}}$ is solved from  (\ref{psiFouriersol}), set
$$
  \widehat{g} = (\mathbf{L}_q-\mathbf{L}_0)\widehat{\psi|_{\Omega}}
$$
and define a function $g:\partial\Omega\rightarrow \mathbb{C}$ using the truncated Fourier series inversion:
$$
  g(\theta) = \sum_{n=-N}^{N} \widehat{g}(n)\varphi_n(\theta).
$$
Then the scattering transform can be computed approximately using the formula
\begin{equation}\label{approxT}
  \mathbf{t}(k) \approx  \int_0^{2\pi} e^{i\overline{k}\exp(-i\theta)}g(\theta) \,d\theta.
\end{equation}
The approximation in (\ref{approxT}) is most accurate for $k$ near zero.

Of course, the accuracy in (\ref{approxT}) can be improved also by increasing $N$, but there is a limit to that. With reasonable computational resources it is possible to compute $\mu_n(q)$ accurately enough up to $|n|\leq N=16$, but computation for $|n|>16$ quickly becomes an extremely hard problem. The reason is that the difference $|\mu_n(q)-|n||$ becomes exponentially small as $|n|$ grows. We remark that one can achieve higher accuracy by computing the difference $|\mu_n(q)-|n||$ directly analogously to the approach in \cite{HH}, but in this work there was no need for that.

\section{Computational results}\label{sec:compres}

\noindent
We will study two examples numerically: the simple case $q_\lambda=\lambda w$ in Section \ref{sec:firstexample} and a more complicated case $\widetilde{q}_\lambda=\widetilde{q}_0+\lambda w$ in Section \ref{sec:secondexample}. Here $\widetilde{q}_0$ is a nontrivial conductivity-type potential.

\subsection{Definition of a test function $w$}

We define a radial $C^2_0$ function $w(z)=w(|z|)$ as follows. Take two radii $0<R_1<R_2<1$ and define $w(|z|)$ in three pieces:
\begin{equation}\label{testfunw}
w(|z|)=
\left\{
\begin{array}{ccl}
1&\mbox{for}& 0\leq |z|\leq R_1,\\
p(|z|)&\mbox{for}& R_1<|z|<R_2,\\
0&\mbox{for}& R_2\leq |z|\leq 1.
\end{array}
\right.
\end{equation}
The polynomial $p$ in (\ref{testfunw}) is constructed as follows. Note that the polynomial $\widetilde{p}(t)=1-10t^3+15t^4-6t^5$ is smooth in the interval $0\leq t\leq 1$ and satisfies $p^{\prime\prime}(0)=p^{\prime}(0)=0=p^{\prime}(1)=p^{\prime\prime}(1)$. Set for $R_1\leq t\leq R_2$
$$
  p(t) = \widetilde{p}(\frac{t-R_1}{R_2-R_1}).
$$
The test function defined above is twice continuously differentiable instead of infinitely smooth as in the theoretical part above. However, the discrepancy is not essential in this numerical work. See Figure \ref{fig:testfunw} for a plot of the test function $w$.
\begin{figure}
\begin{picture}(300,180)
\put(-70,-55){\includegraphics[width=15cm]{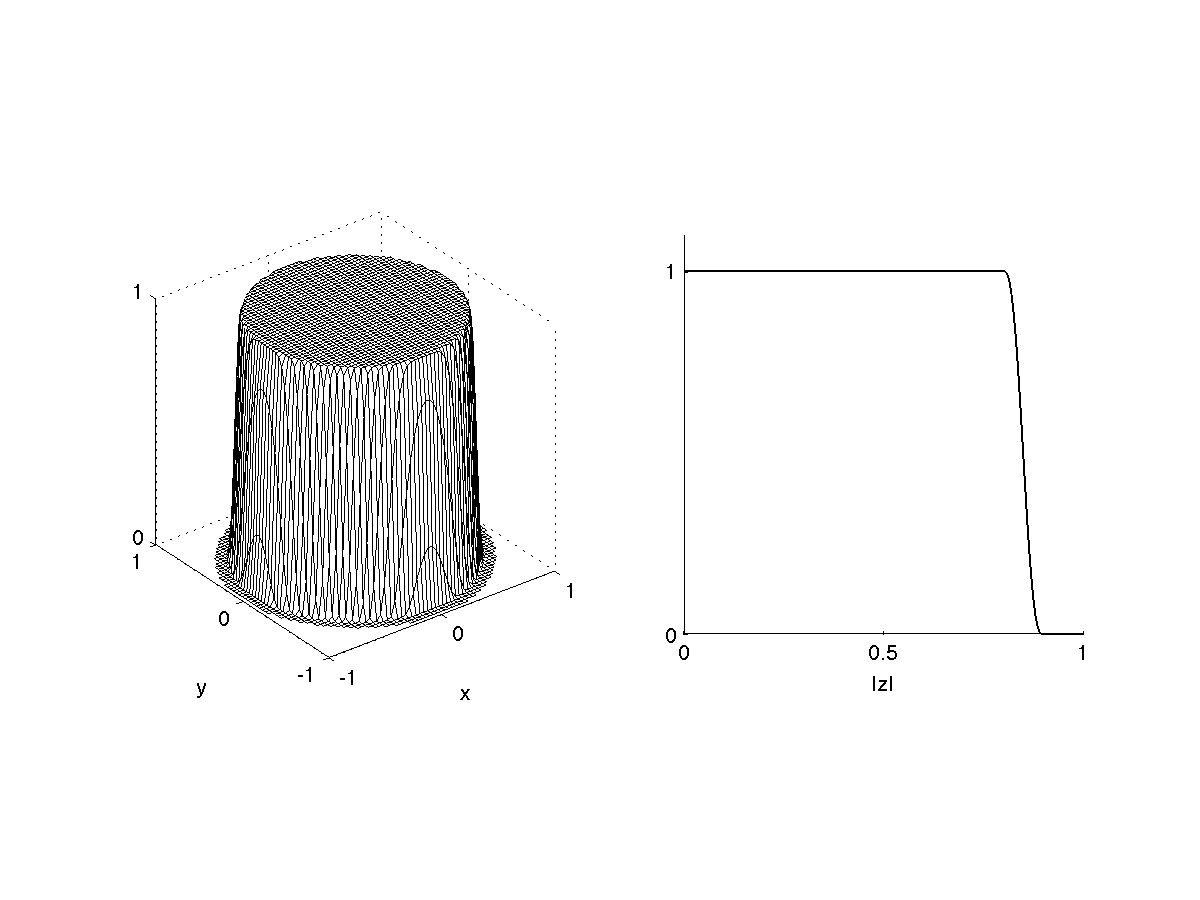}}
\put(200,120){\small Profile of $w(|z|)$}
\end{picture}
\caption{\label{fig:testfunw}Test function $w(z)=w(|z|)$ defined by formula (\ref{testfunw}) with $R_1=0.8$ and $R_1=0.9$.}
\end{figure}

\subsection{First example: zero potential at $\lambda=0$}\label{sec:firstexample}

We set $q_\lambda=\lambda w$ with the radial test function defined by (\ref{testfunw}) with $R_1=0.8$ and $R_1=0.9$.

Our aim is to compute the radial scattering transform $\mathbf{t}_\lambda(|k|)$ for $0<|k|\leq 3.5$ and for the parameter $\lambda$ ranging in a suitable interval. In practice we choose the following finite set of $k$-values:
\begin{equation}\label{kgrid}
  k=0.01,0.02,0.03,\dots,3.49,3.50.
\end{equation}
Note that the $k$-grid is bounded away from zero by a significant gap of size $10^{-2}$. Furthermore, we consider the following choices of parameter $\lambda$:
\begin{equation}\label{lambdagrid}
  \lambda=-35.00,-34.95,-34.90,\dots,34.90,34.95,35.00.
\end{equation}

We start the numerical computations by constructing the matrices (\ref{matrixeq}) for each $k$-value listed in (\ref{kgrid}). We take $N=12$, so each $\mathbf{H}_k$ has size $25{\times}25$. These matrices need to be computed only once for a given $k$, so we can reuse the matrices in our second example below.

Next we use the methods described in Section \ref{sec:numericalevals} to compute the eigenvalues of the {\sc dn} map corresponding to each potential $q_\lambda$ with $\lambda$ ranging as in (\ref{lambdagrid}). We construct a finite element mesh for the unit disc with 131585 nodes and 262144 triangles. We use Matlab's PDE toolbox to solve the Neumann problem (\ref{NeumProb}) with $g=\varphi_n$ for $1,\dots,N$, and get accurate approximations to the eigenvalues $\nu_1,\dots,\nu_N$ of the {\sc nd} map. By (\ref{eigiden2}) and (\ref{Nevals}) we see that we know all eigenvalues $\mu_n(q_\lambda)$ of the {\sc dn} maps except for $\mu_0(q_\lambda)$. We use FEM to solve Dirichlet problems of the form (\ref{eq:DP}) where $q=q_\lambda$ and $f=1$; this way we get good approximations to $\mu_0(q_\lambda)$. See Figure \ref{fig:qzeroevals} for plots of the eigenvalue $\mu_0(q_\lambda)$ as function of the parameter $\lambda$.

\begin{figure}
\begin{picture}(300,180)
 \put(-60,15){\includegraphics[width=7cm]{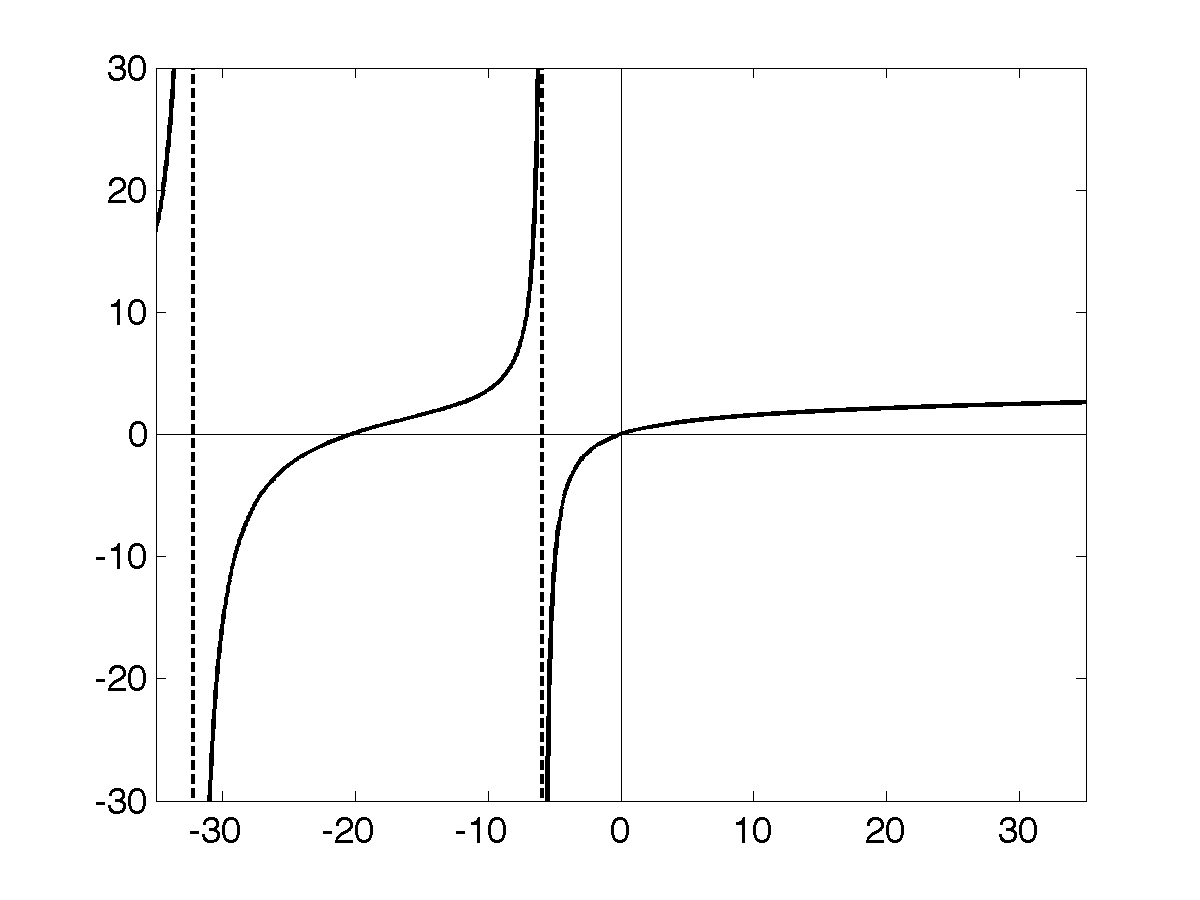}}
 \put(160,15){\includegraphics[width=7cm]{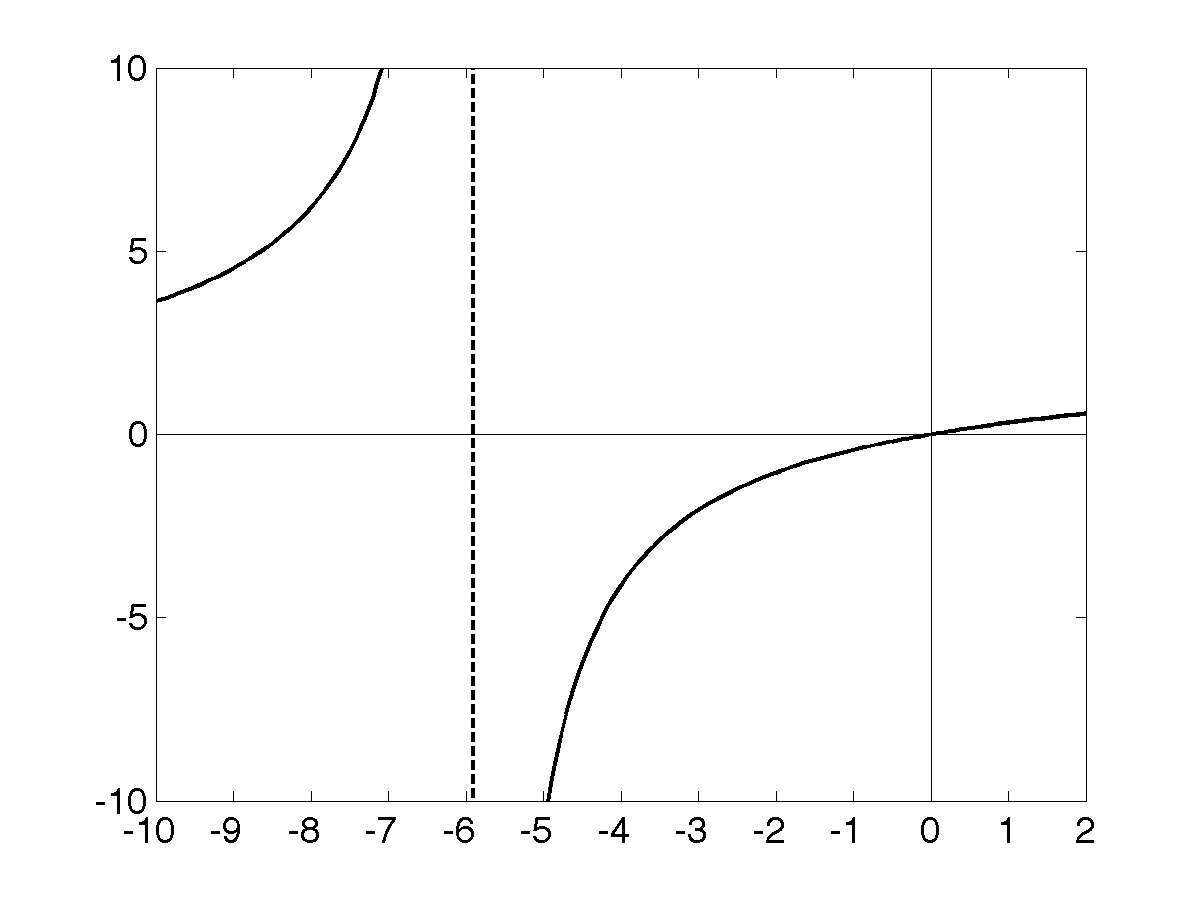}}
\put(42,0){$\lambda$}
\put(262,0){$\lambda$}
\end{picture}
\caption{\label{fig:qzeroevals}Eigenvalues $\mu(\lambda)=\mu_0(q_\lambda)$ corresponding to the first example potential. Left: plot with full parameter range $-35\leq \lambda\leq 35$. Right: detail of the left plot. Note that $\mu(0)=0$ and $\mu^\prime(0)>0$ as predicted by Lemma \ref{lemma:mu}. Also, note that Dirichlet eigenvalues of the potential $q_\lambda$ in the unit disc cause singularities in $\mu(\lambda)$.}
\end{figure}

Now we have all the ingredients for solving the matrix equation (\ref{def:psiBIE}) and evaluating the scattering transform by (\ref{approxT}). We used Matlab, a parallelization middleware solution provided by Techila Ltd, and two hardware solutions: the ``Ukko'' cluster computer of the Computer Science Department of University of Helsinki, and Microsoft Azure cloud computing services.

See Figure \ref{fig:scat1A} for a plot of the scattering transform profiles as a grayscale plot, and Figure \ref{fig:scatmaps1} for some selected profiles as conventional plots. Further, see Figure \ref{fig:asympfit} for a comparison of numerical results and the asymptotic formula (\ref{eq:r.lambda}) for the radius of the exceptional circle.

\begin{figure}
\begin{picture}(300,325)
 \put(-60,20){\includegraphics[width=14cm]{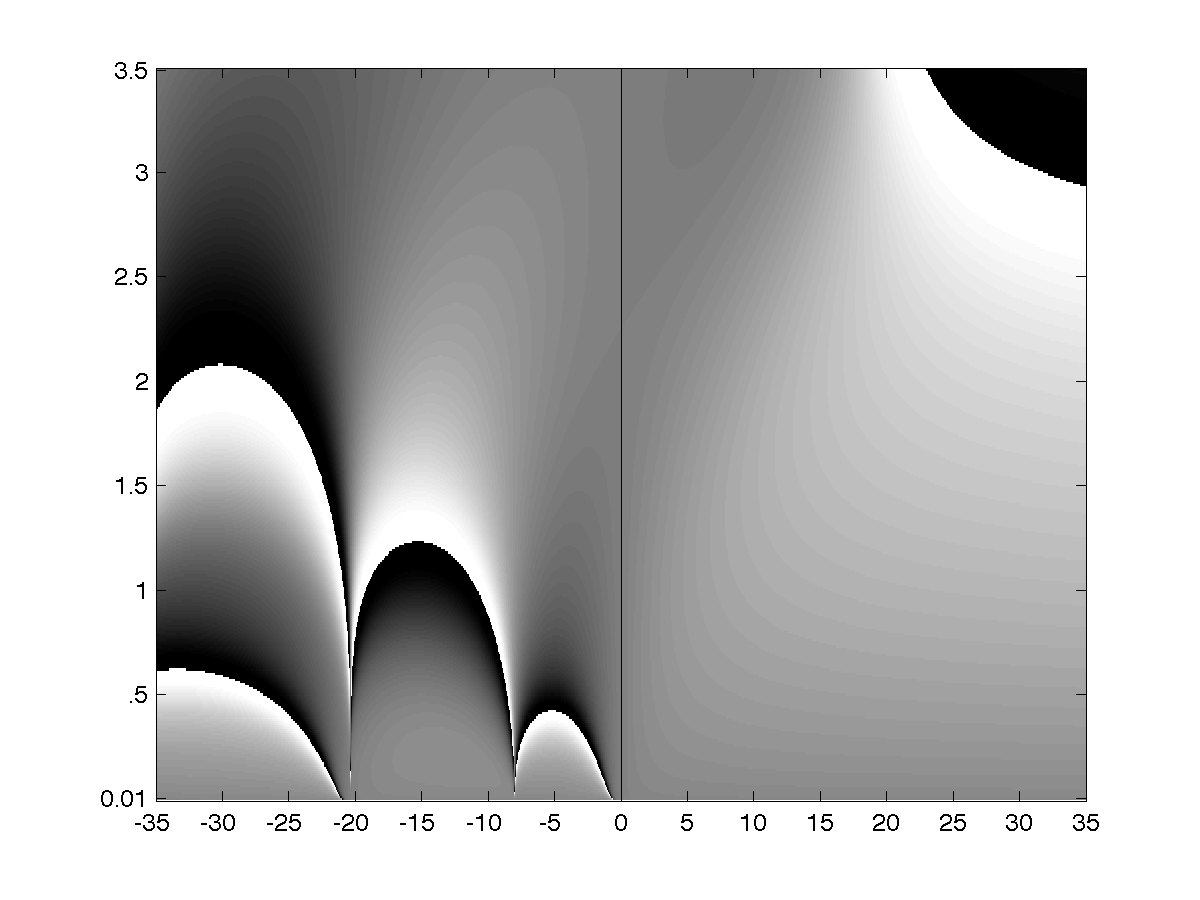}}
\put(140,0){\Large $\lambda$}
\put(-75,200){\Large $|k|$}
\end{picture}
\caption{\label{fig:scat1A}Scattering transform corresponding to the first example. The horizontal axis is the parameter $\lambda$ in the definition $q_\lambda(z) = \lambda w(z)$ of the potential. The vertical axis is $|k|$. There are curves along which a singular jump ``from $-\infty$ to $+\infty$'' appears.
The $k$ values at those curves are exceptional points. See Figure \ref{fig:scatmaps1} for further illustration of the singularities.}
\end{figure}

\begin{figure}
\begin{picture}(300,480)
 \put(190,475){\small Profile of scattering transform}
 \put(190,370){\includegraphics[height=3.5cm]{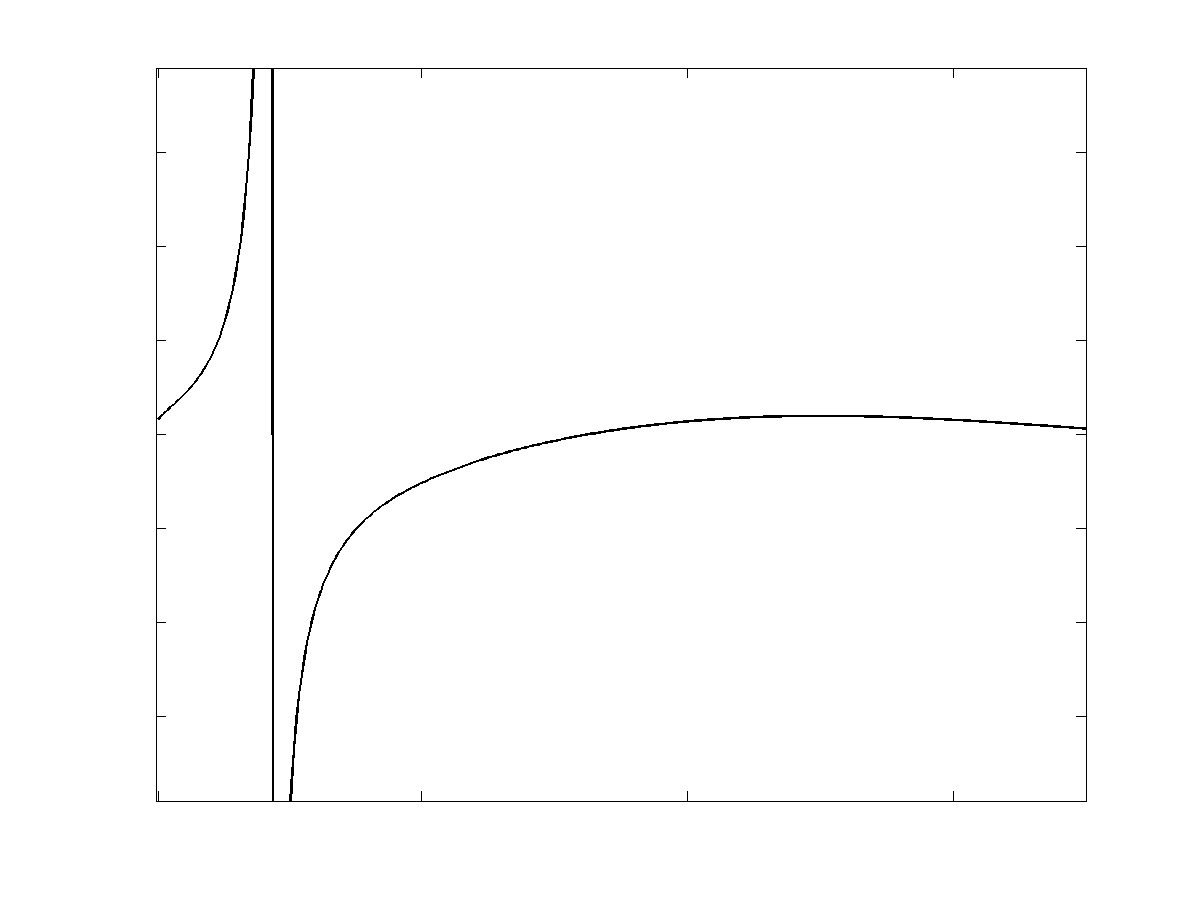}}
 \put(190,250){\includegraphics[height=3.5cm]{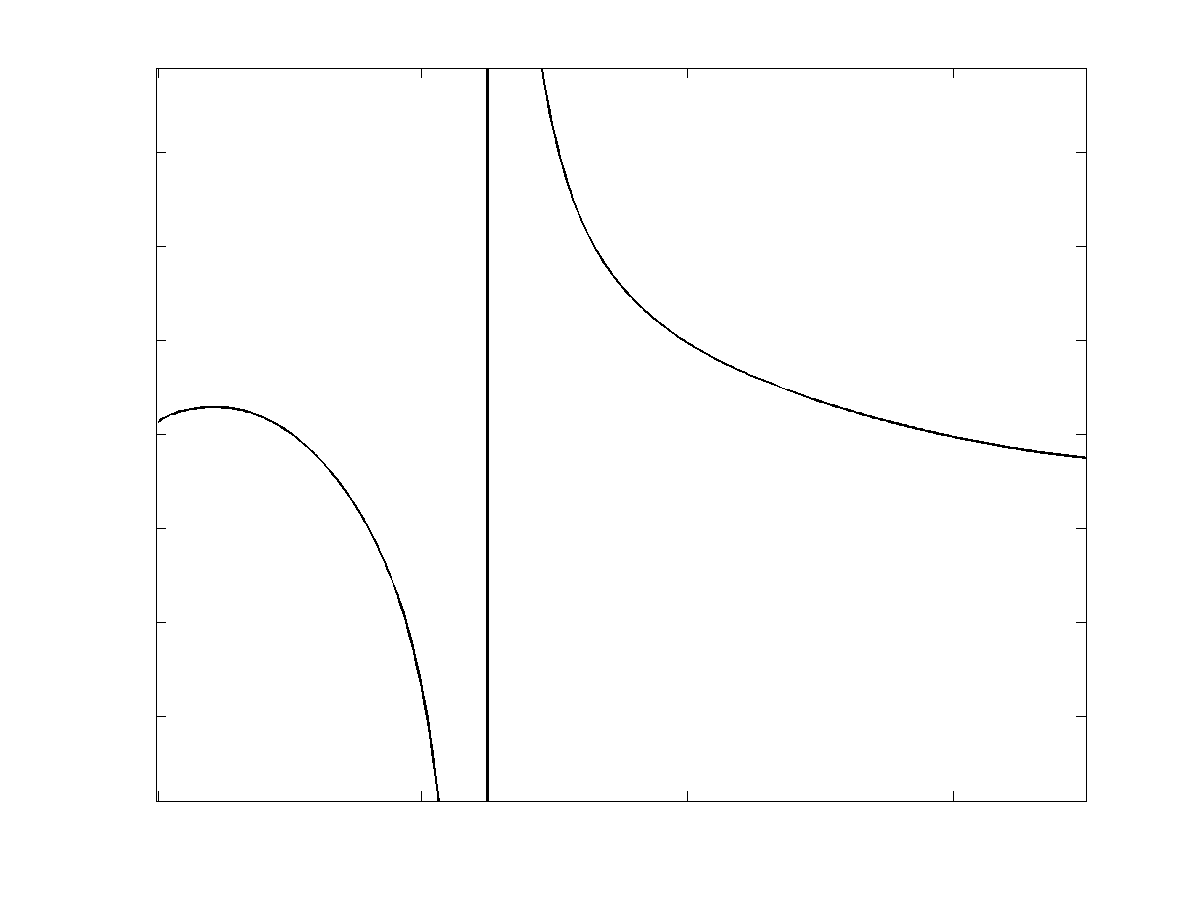}}
 \put(190,130){\includegraphics[height=3.5cm]{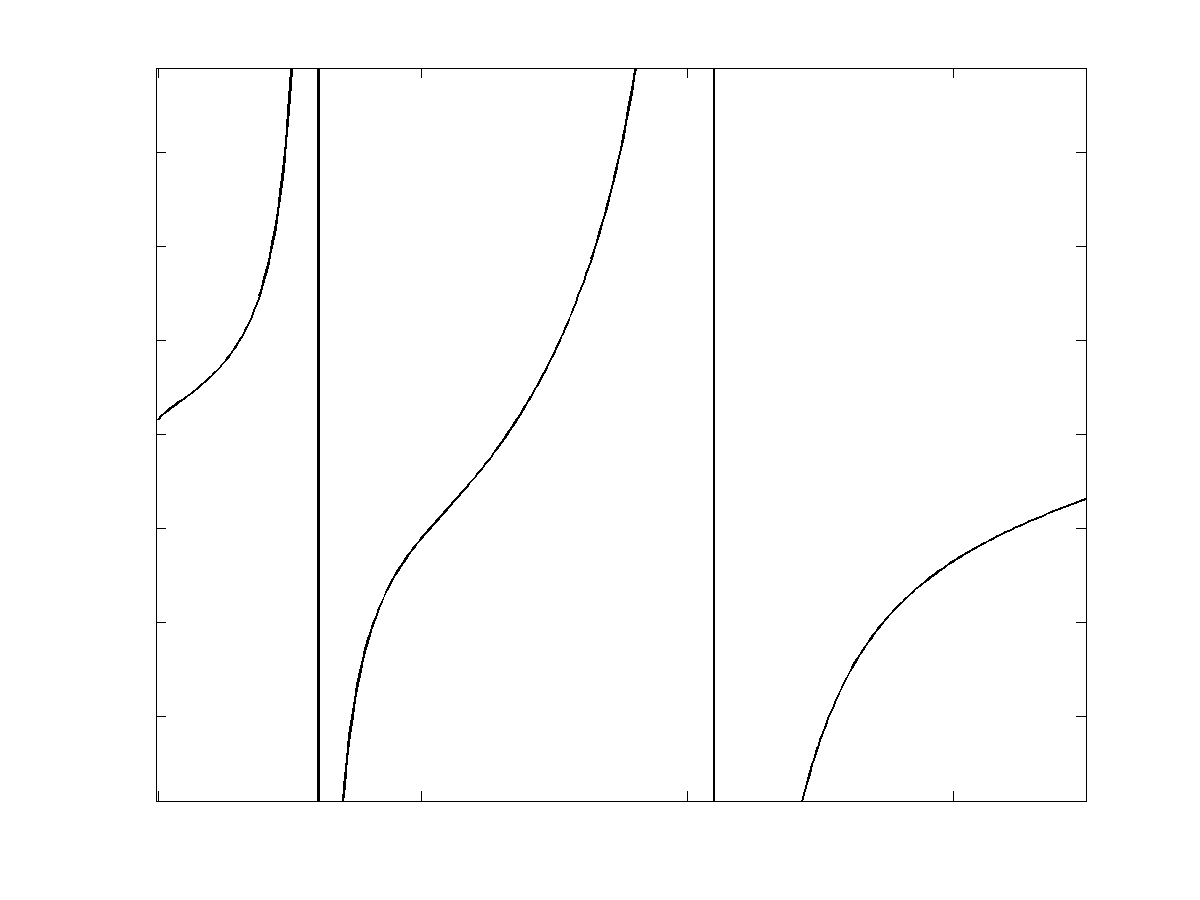}}
 \put(190,10){\includegraphics[height=3.5cm]{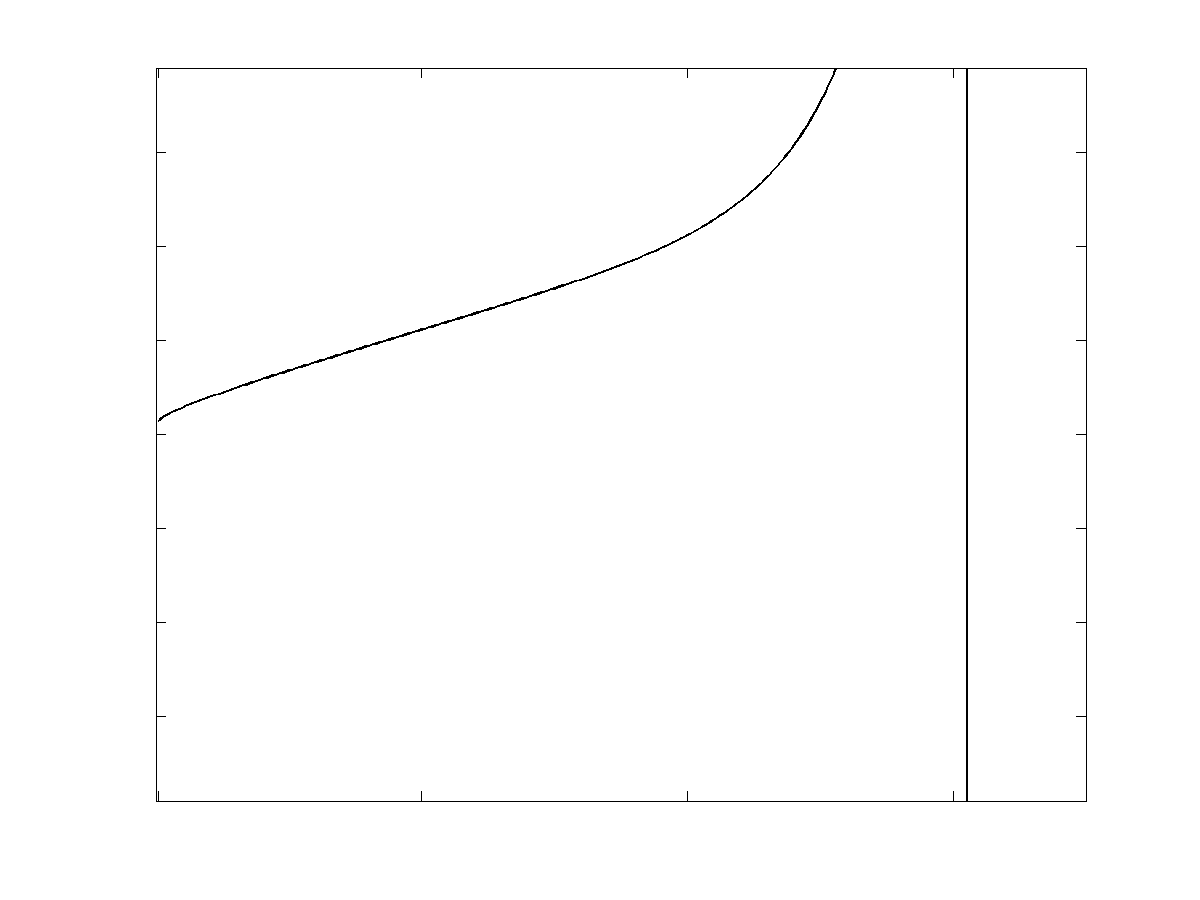}}
 \put(240,-8){\small $|k|$}
\put(-40,410){$\lambda=-5$}
\put(-40,290){$\lambda=-15$}
\put(-40,170){$\lambda=-30$}
\put(-40,50){$\lambda=30$}
 \put(10,370){\includegraphics[height=3.5cm]{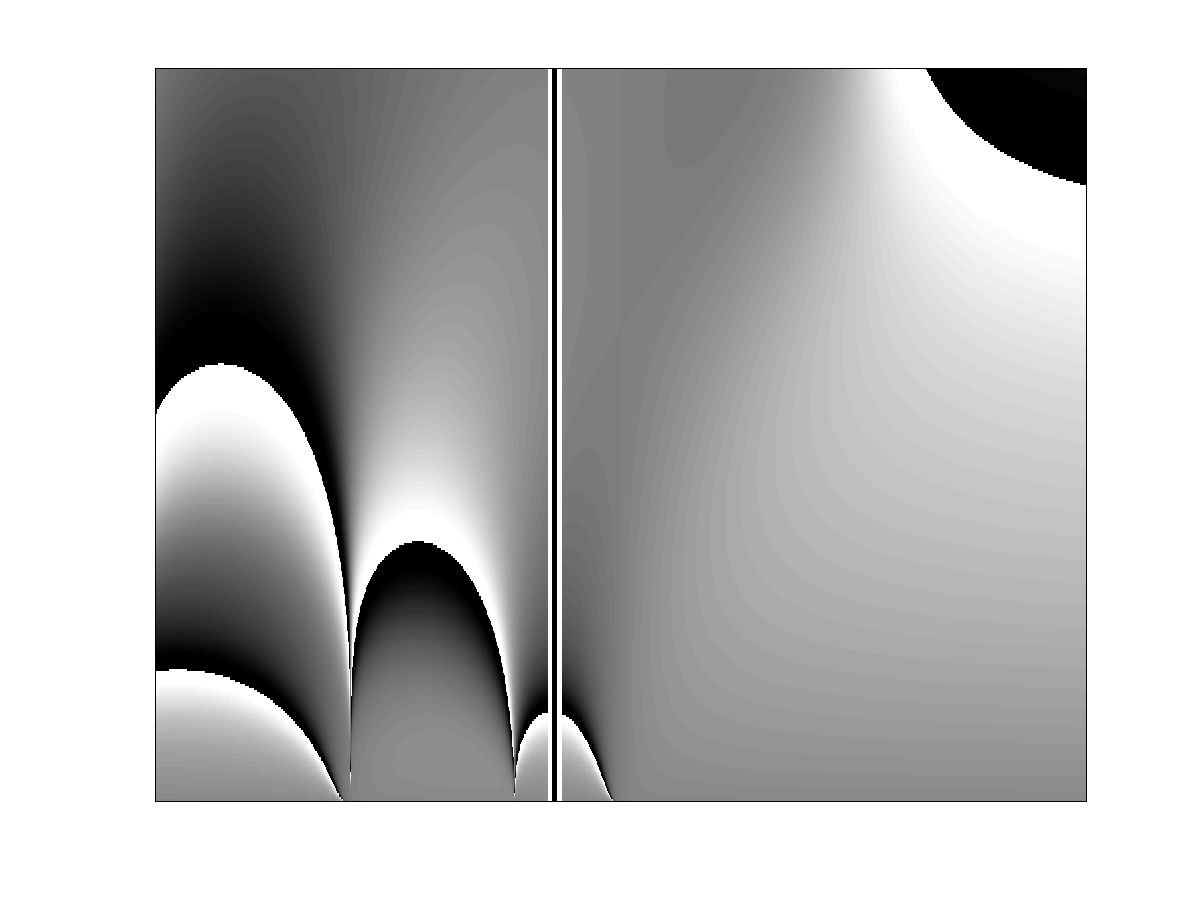}}
 \put(10,250){\includegraphics[height=3.5cm]{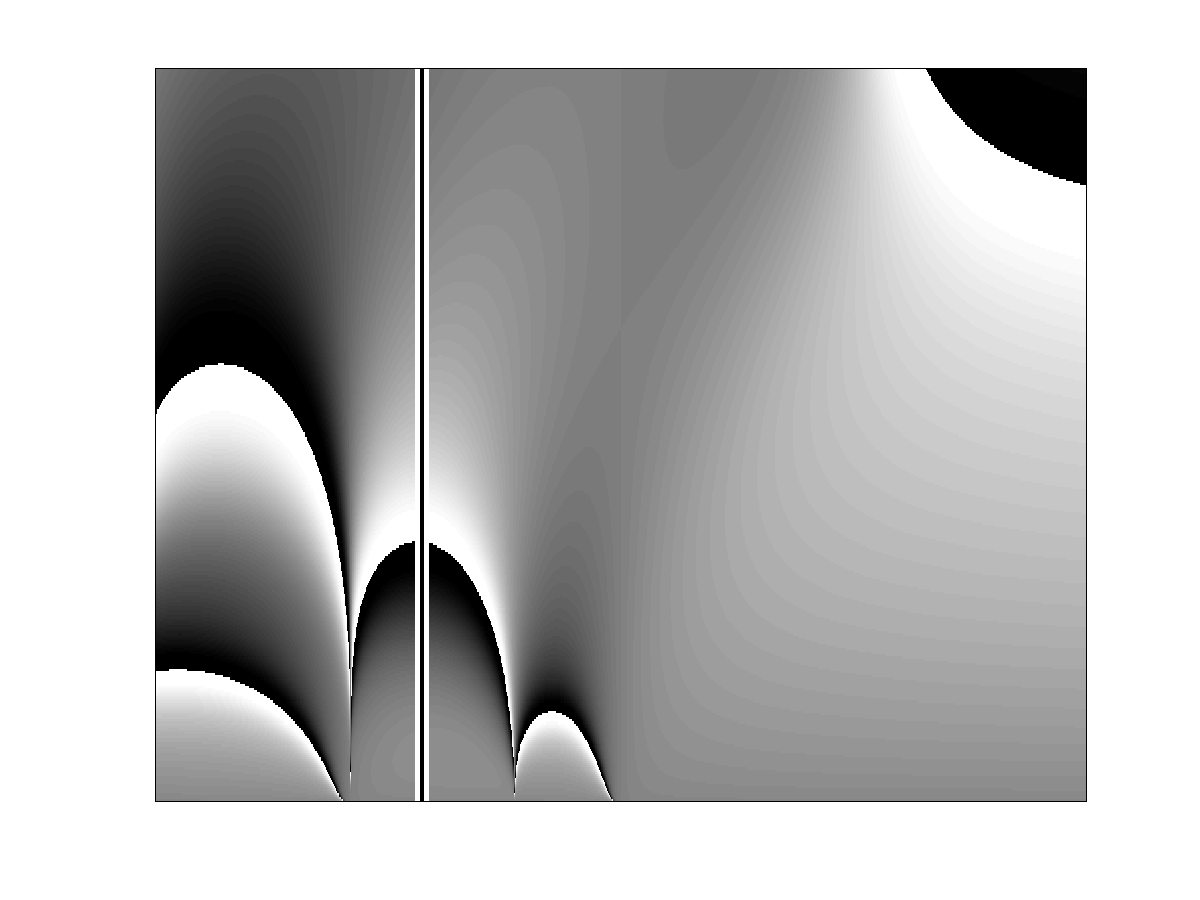}}
 \put(10,130){\includegraphics[height=3.5cm]{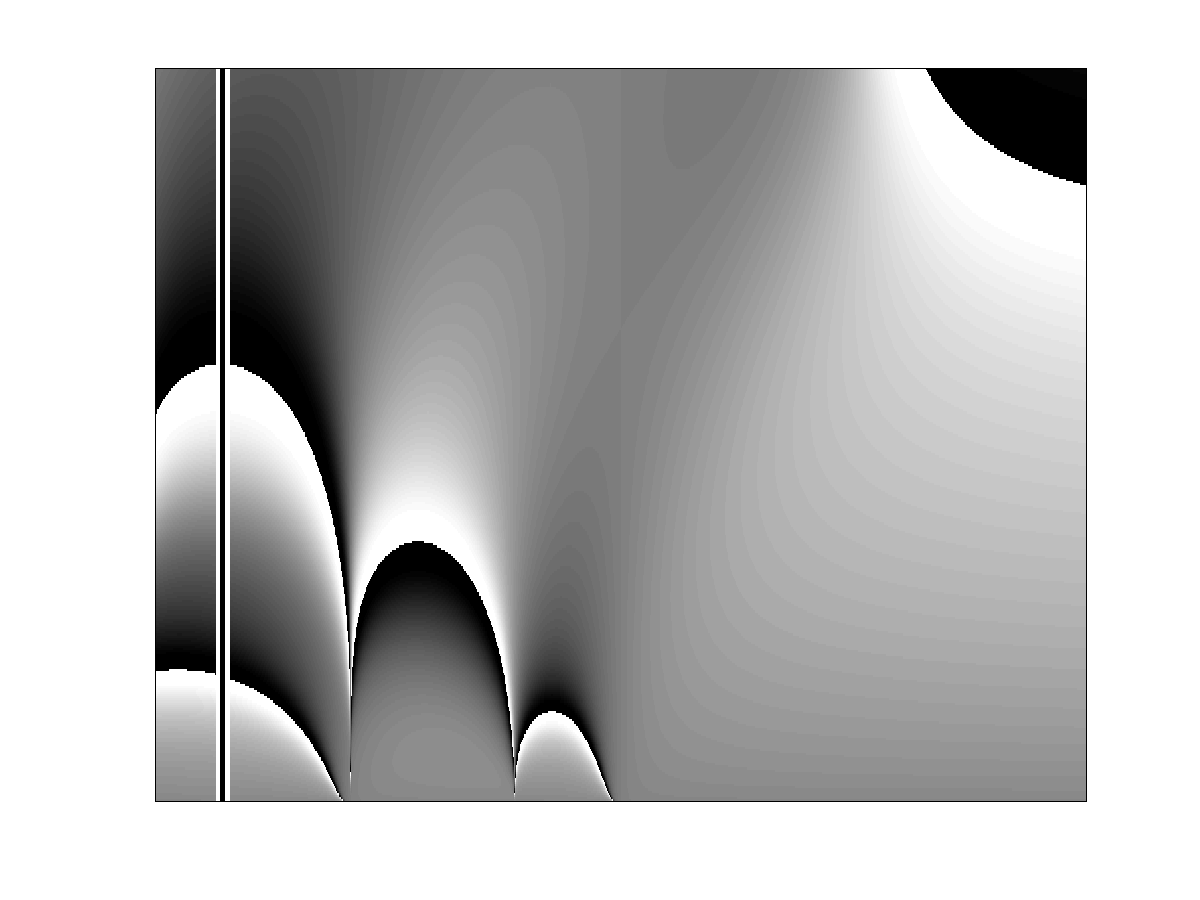}}
 \put(10,10){\includegraphics[height=3.5cm]{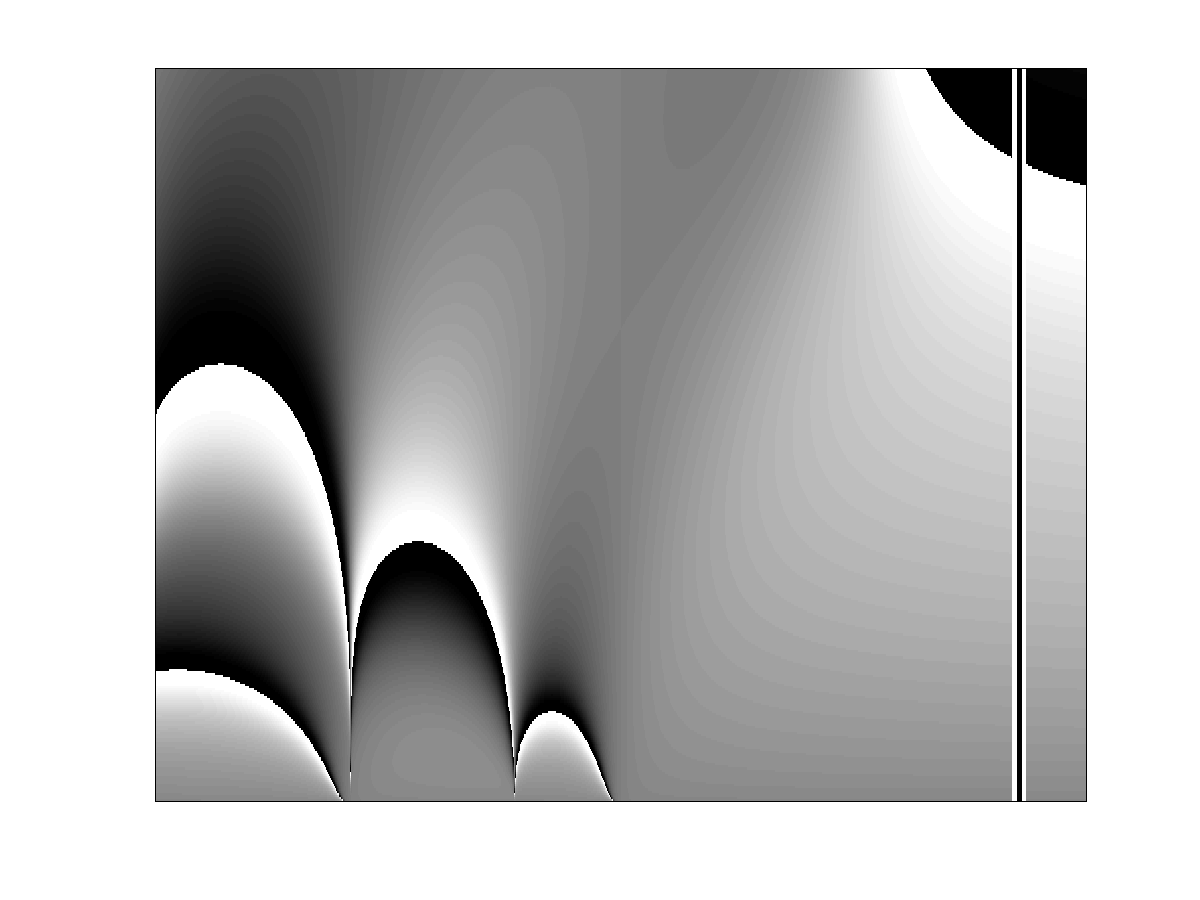}}
\end{picture}
\caption{\label{fig:scatmaps1}Profiles of scattering transforms for various $\lambda$. This picture corresponds to Example 1.}
\end{figure}

\begin{figure}
\begin{picture}(400,160)
 \put(-20,10){\includegraphics[height=4.6cm]{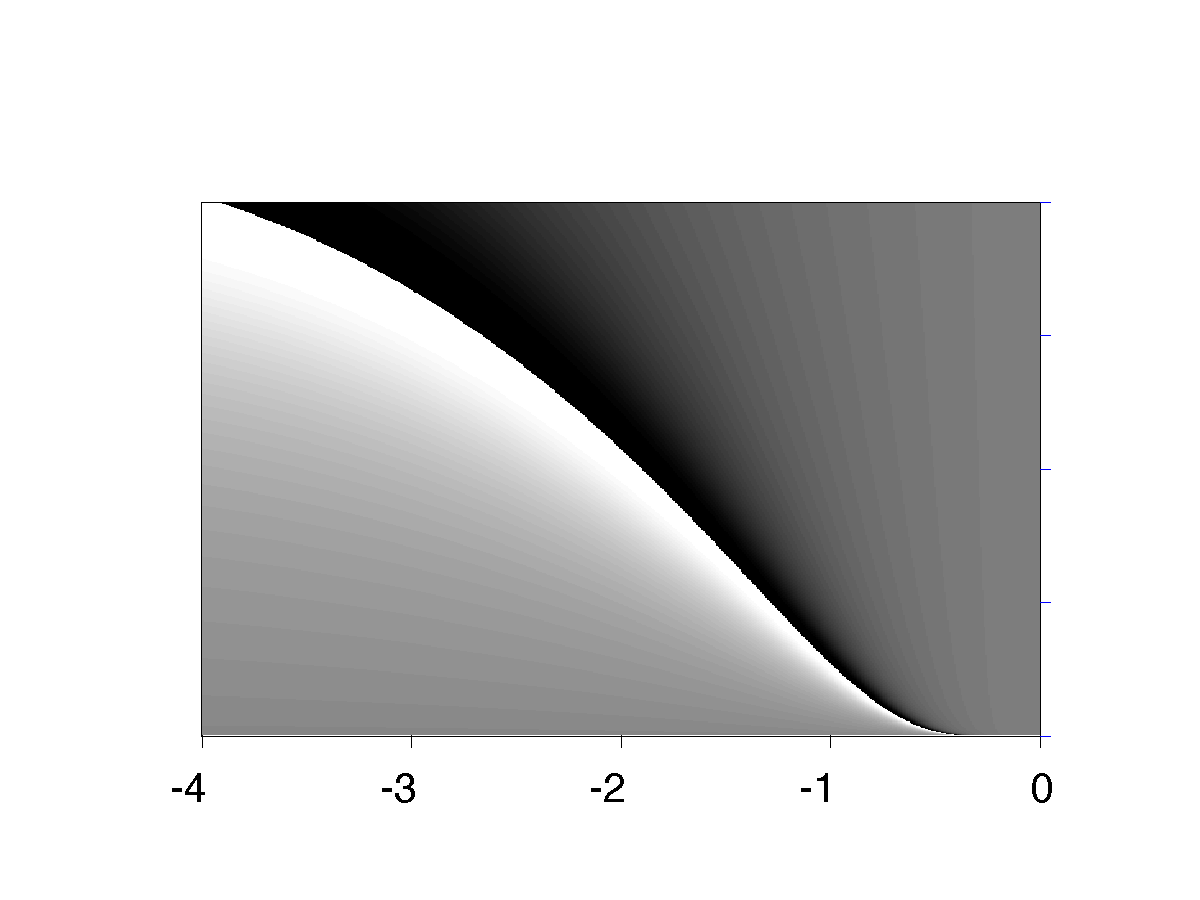}}
 \put(175,10){\includegraphics[height=4.8cm]{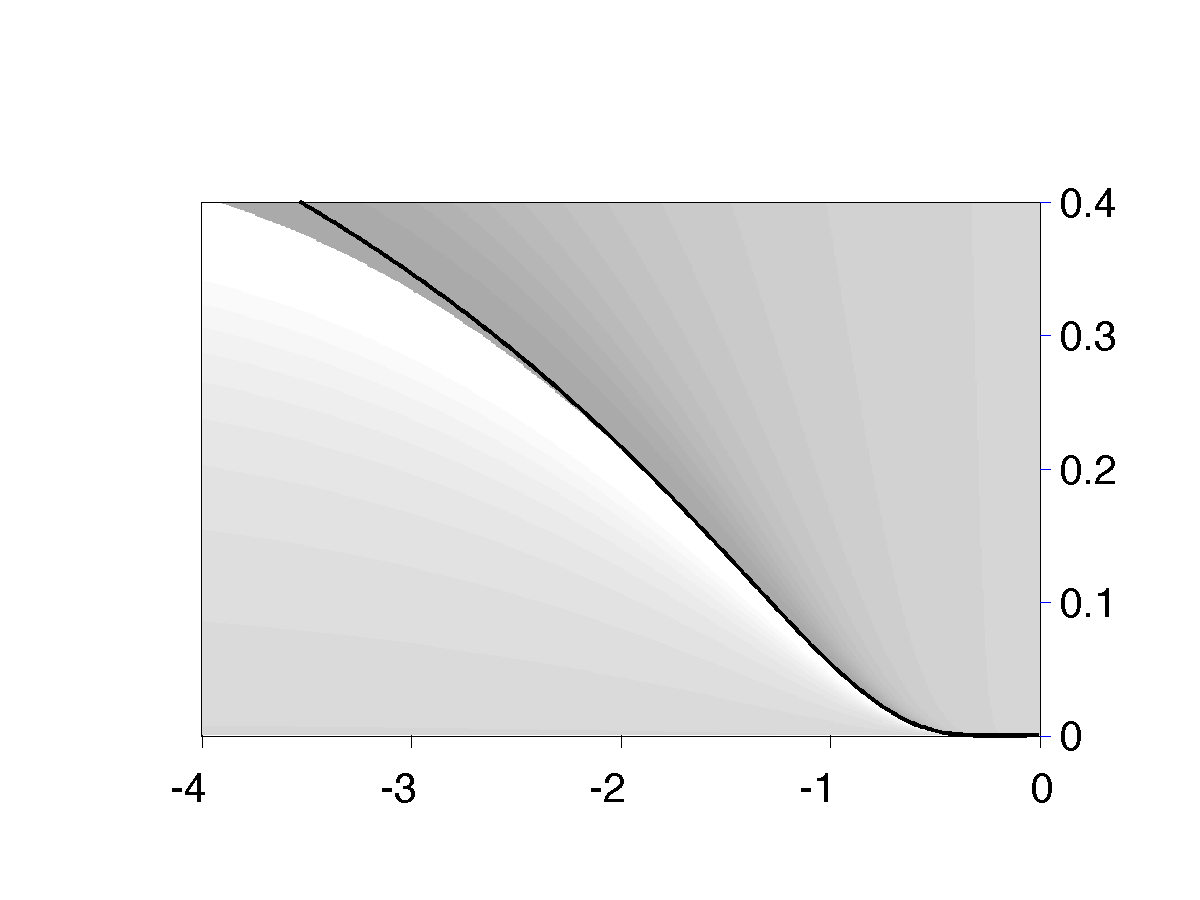}}
 \put(377,87){\large $|k|$}
 \put(105,0){$\lambda$}
 \put(300,0){$\lambda$}
 \put(220,130){$\displaystyle \exp\left[  2\pi\left(  h+\frac{1}{2\pi\mu(\lambda)}\right)  \right] $}
\end{picture}
\caption{\label{fig:asympfit}Comparison of numerical results and the asymptotic formula (\ref{eq:r.lambda}) for the radius of the exceptional circle. This plot is for Example 1. Left: detail from Figure \ref{fig:scat1A} with parameters ranging in the rectangle $-4\leq\lambda\leq 0$ and $0.001\leq |k|\leq 0.4$. Right: the asymptotic function $r(\lambda)$ given by Theorem \ref{thm:main}. For ease of comparison, we also show in the background the pixel image from the left but with a lighter colormap.  The asymptotic formula matches the computational result very closely in the interval $-2\leq\lambda\leq 0$.}
\end{figure}

\clearpage

\subsection{Second example: nontrivial conductivity-type potential at $\lambda=0$}\label{sec:secondexample}

Here we define ${\widetilde{q}}_\lambda={\widetilde{q}}_0+\lambda w$, where the test function $w$ is defined by formula (\ref{testfunw}) and plotted in Figure \ref{fig:testfunw}. The potential ${\widetilde{q}}_0$ corresponding to $\lambda=0$ is defined by ${\widetilde{q}}_0=\sigma^{-1/2}\Delta\sigma^{1/2}$. See Figure \ref{fig:sigmapoten} for plots of the conductivity $\sigma$ and the potential ${\widetilde{q}}_0$. Figure \ref{fig:sigmascat} shows the profile of the non-singular scattering transform of ${\widetilde{q}}_0$.

We compute the eigenvalues of the {\sc dn} map corresponding to each potential $\widetilde{q}_\lambda$ similarly than in Example 1. See Figure \ref{fig:qzeroevals2} for plots of the eigenvalue $\mu(\lambda)=\mu_0(\widetilde{q}_\lambda)$ as function of the parameter $\lambda$.

We compute the scattering transform similarly to Example 1 above. See Figure \ref{fig:scat2A} for a plot of the scattering transform profiles as a grayscale plot. Further, see Figure \ref{fig:asympfit2} for a comparison of numerical results and the asymptotic formula (\ref{eq:r.lambda}) for the radius of the exceptional circle.

\begin{figure}
\begin{picture}(390,400)
 \put(-30,190){\includegraphics[width=8cm]{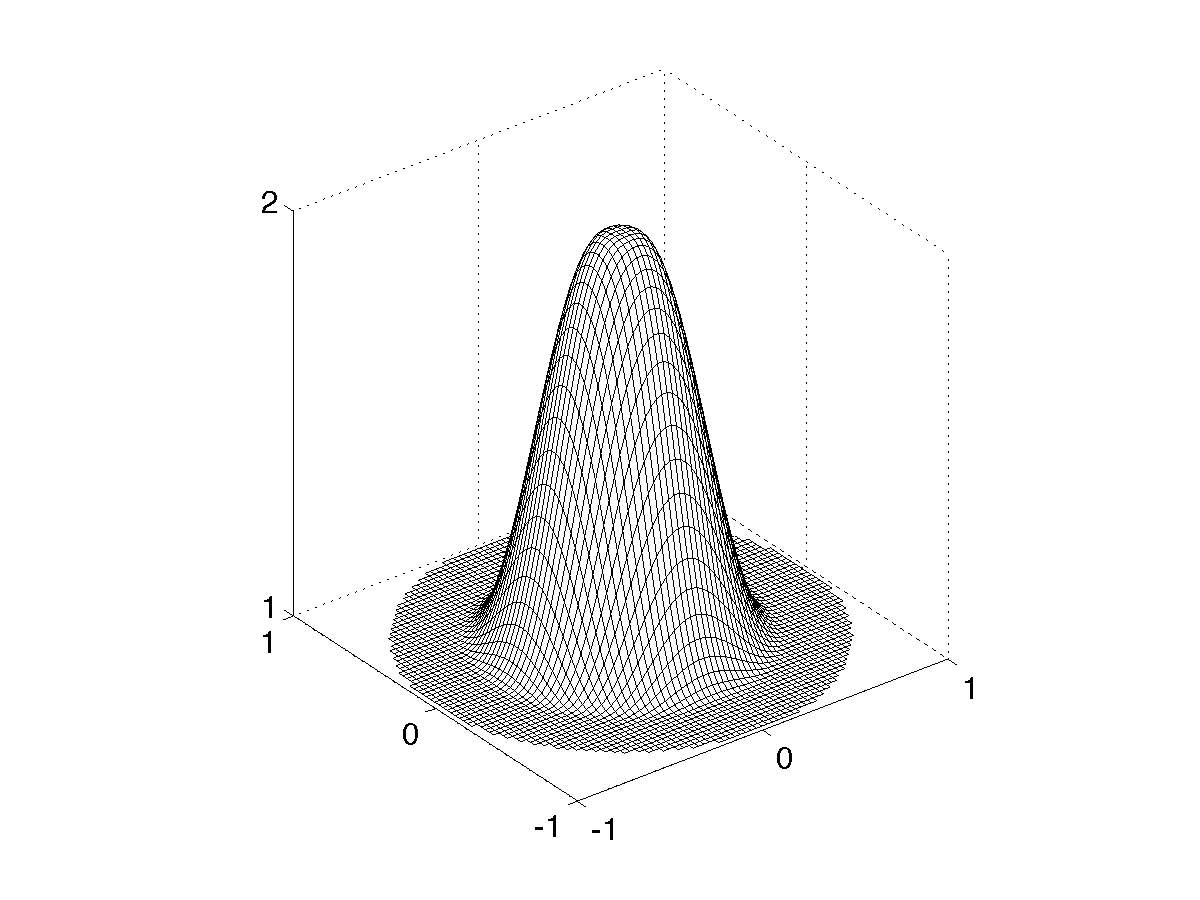}}
\put(-10,386){\small Conductivity $\sigma(z)$}
 \put(-30,0){\includegraphics[width=8cm]{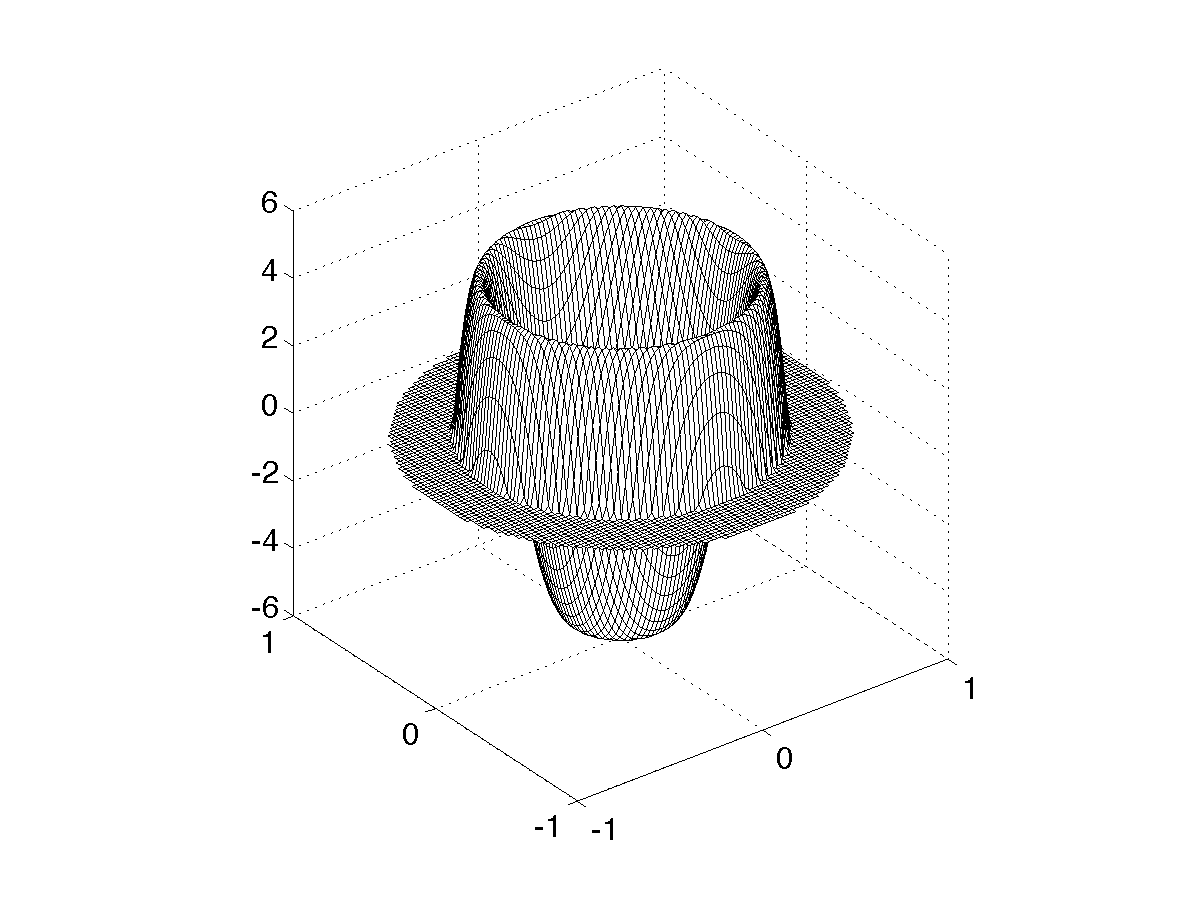}}
\put(-10,176){\small Conductivity-type potential ${\widetilde{q}}_0(z)$ }
 \put(203,220){\includegraphics[width=5.5cm]{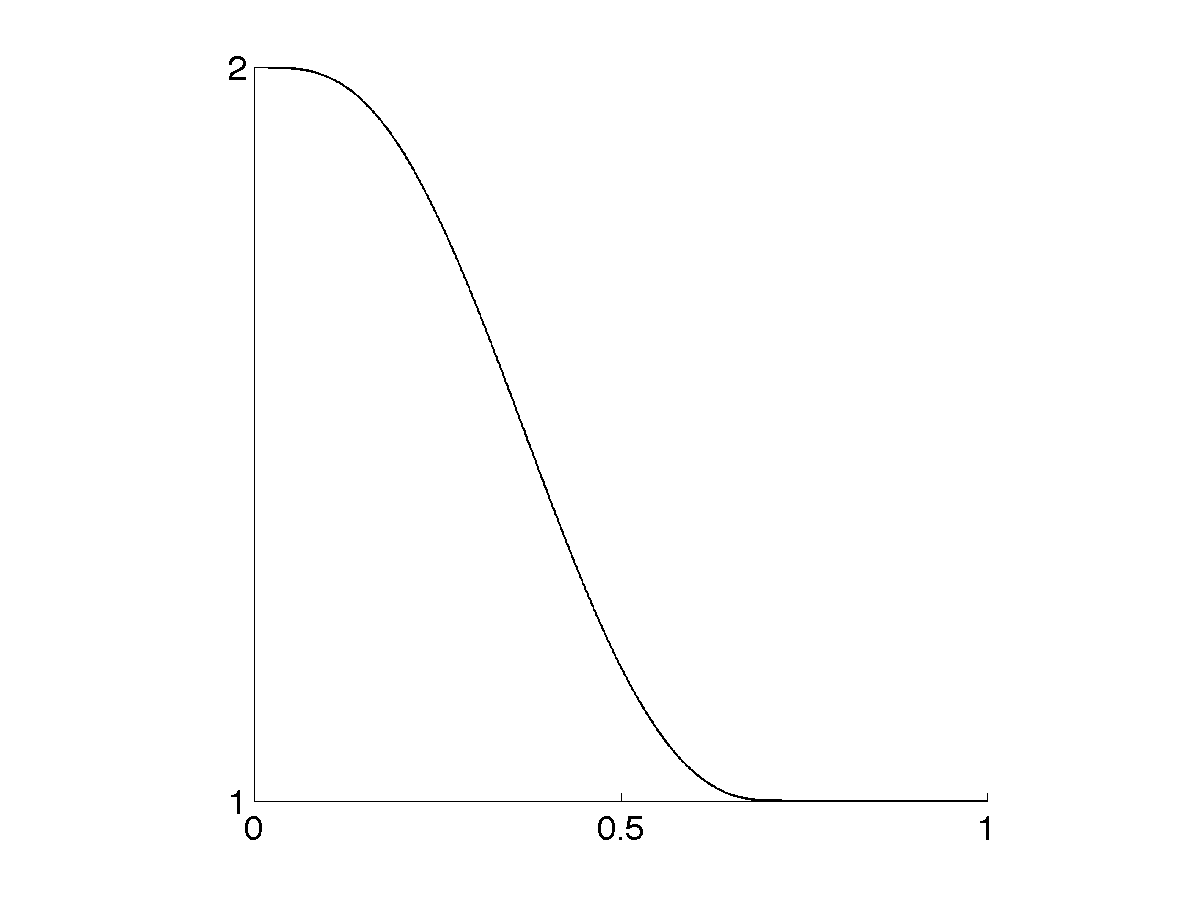}}
\put(200,386){\small Profile of conductivity }
\put(260,326){\small $\sigma(|z|)$}
 \put(200,10){\includegraphics[width=5.5cm]{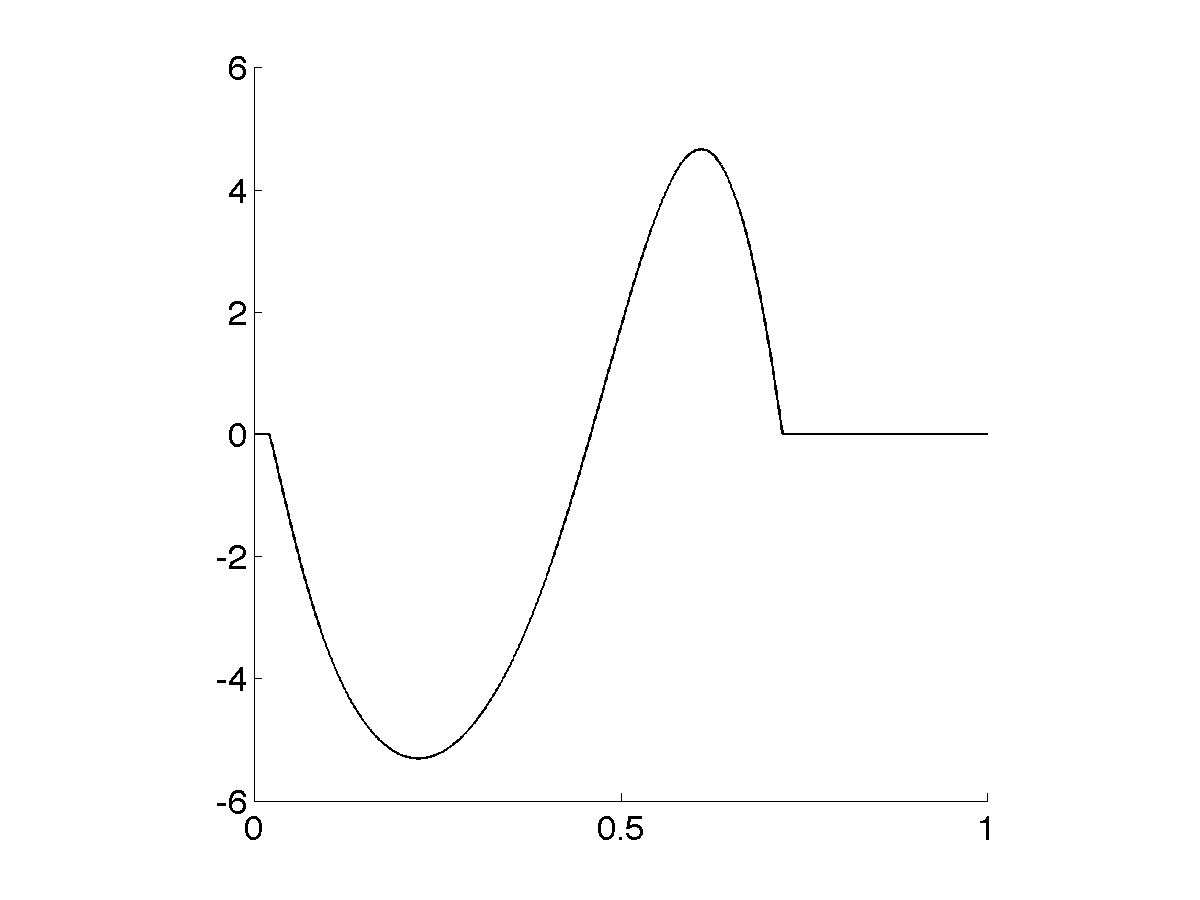}}
\put(200,176){\small Profile of potential }
\put(315,110){\small ${\widetilde{q}}_0(|z|)$}
\put(277,-2){\small $|z|$}
\put(277,208){\small $|z|$}
\end{picture}
\caption{\label{fig:sigmapoten}Top row: mesh plot and profile plot of the rotationally symmetric conductivity $\sigma(z)=\sigma(|z|)$. Bottom row: mesh plot and profile plot of the resulting conductivity-type potential ${\widetilde{q}}_0(z)={\widetilde{q}}_0(|z|)$.}
\end{figure}

\begin{figure}
\begin{picture}(390,200)
 \put(-30,-10){\includegraphics[width=9cm]{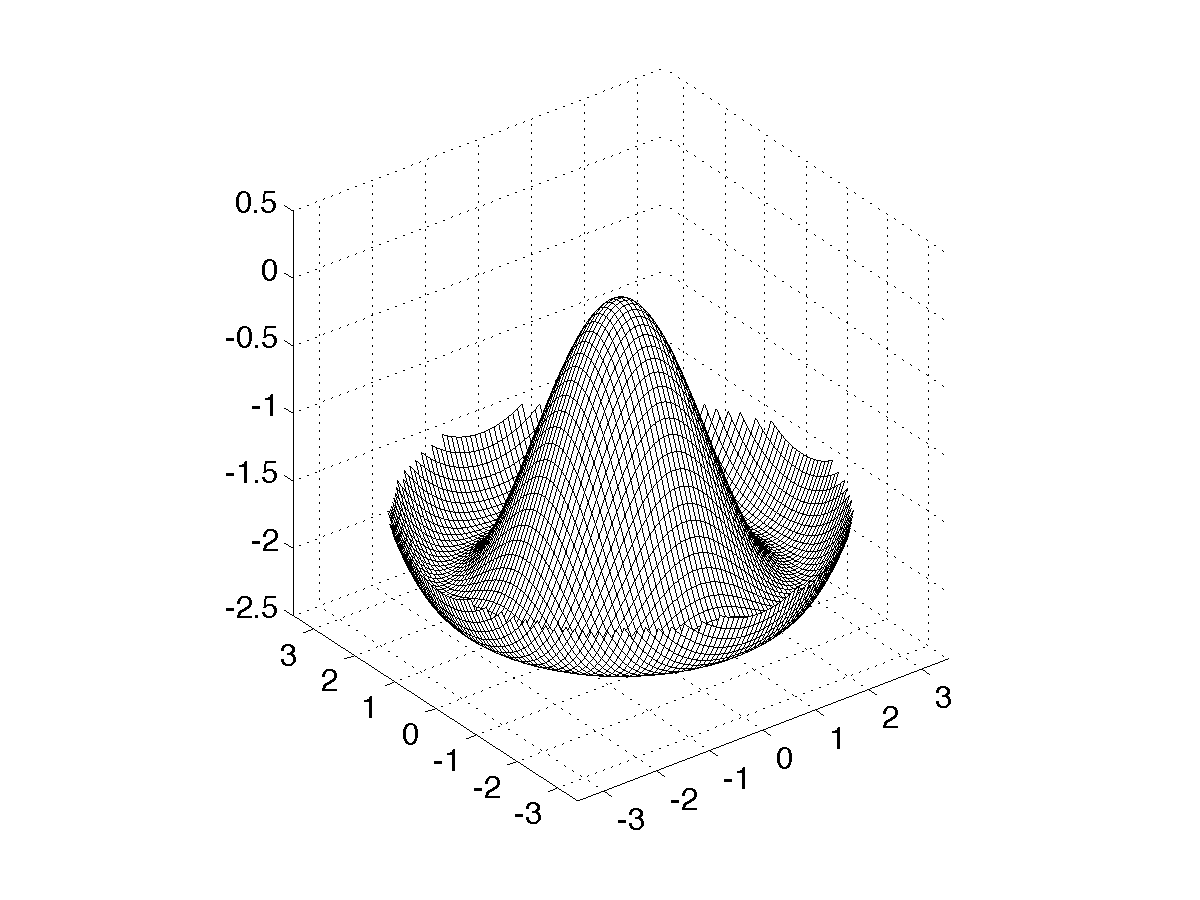}}
\put(-10,170){\small Scattering transform $\mathbf{t}_0(k)$ }
 \put(200,0){\includegraphics[width=6.5cm]{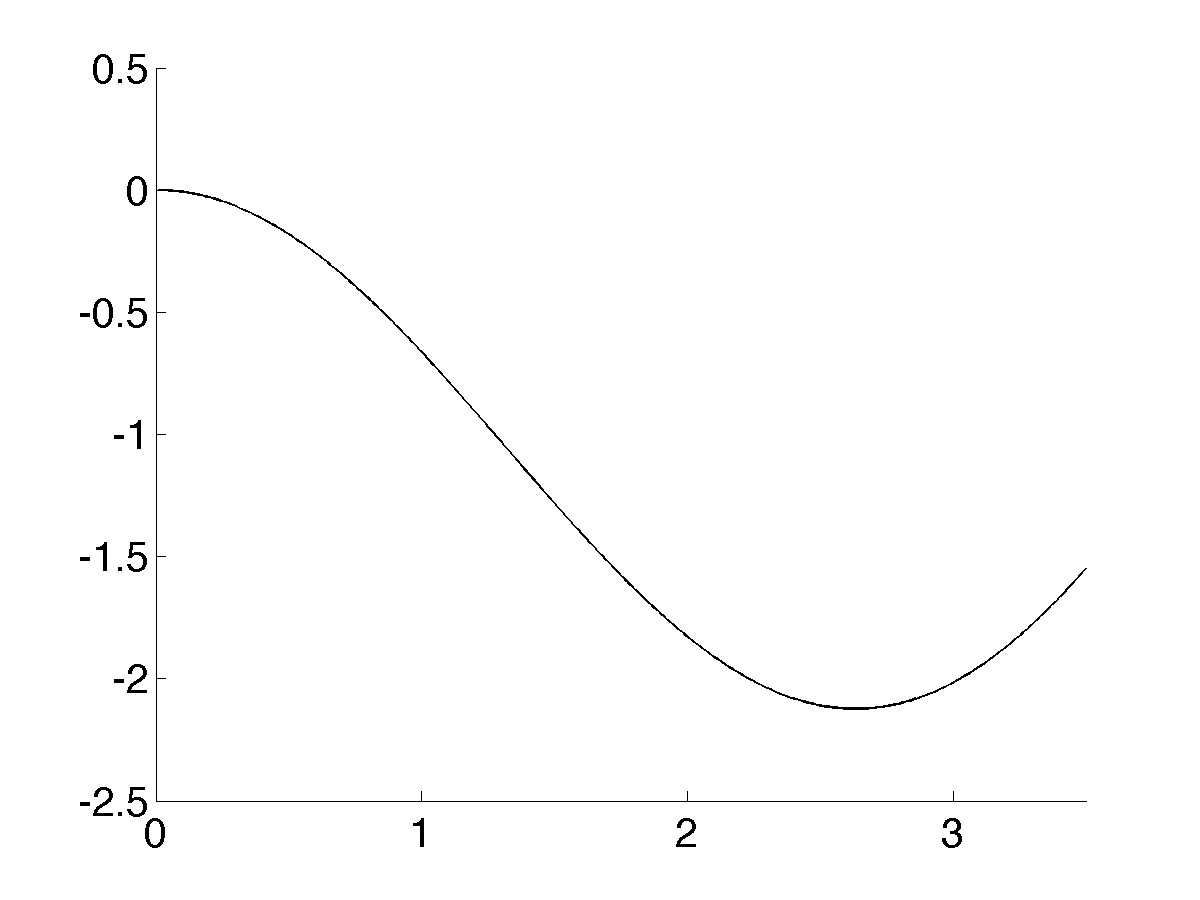}}
\put(200,170){\small Profile of scattering transform }
\put(240,120){ $\mathbf{t}_0(|k|)$}
\end{picture}
\caption{\label{fig:sigmascat}Left: mesh plot of the rotationally symmetric scattering transform $\mathbf{t}_0(k)=\mathbf{t}_0(|k|)$ corresponding to the initial potential ${\widetilde{q}}_0$ shown in Figure \ref{fig:sigmapoten}. Right: profile plot of $\mathbf{t}_0(|k|)$. }
\end{figure}

\begin{figure}
\begin{picture}(300,180)
 \put(-60,15){\includegraphics[width=7cm]{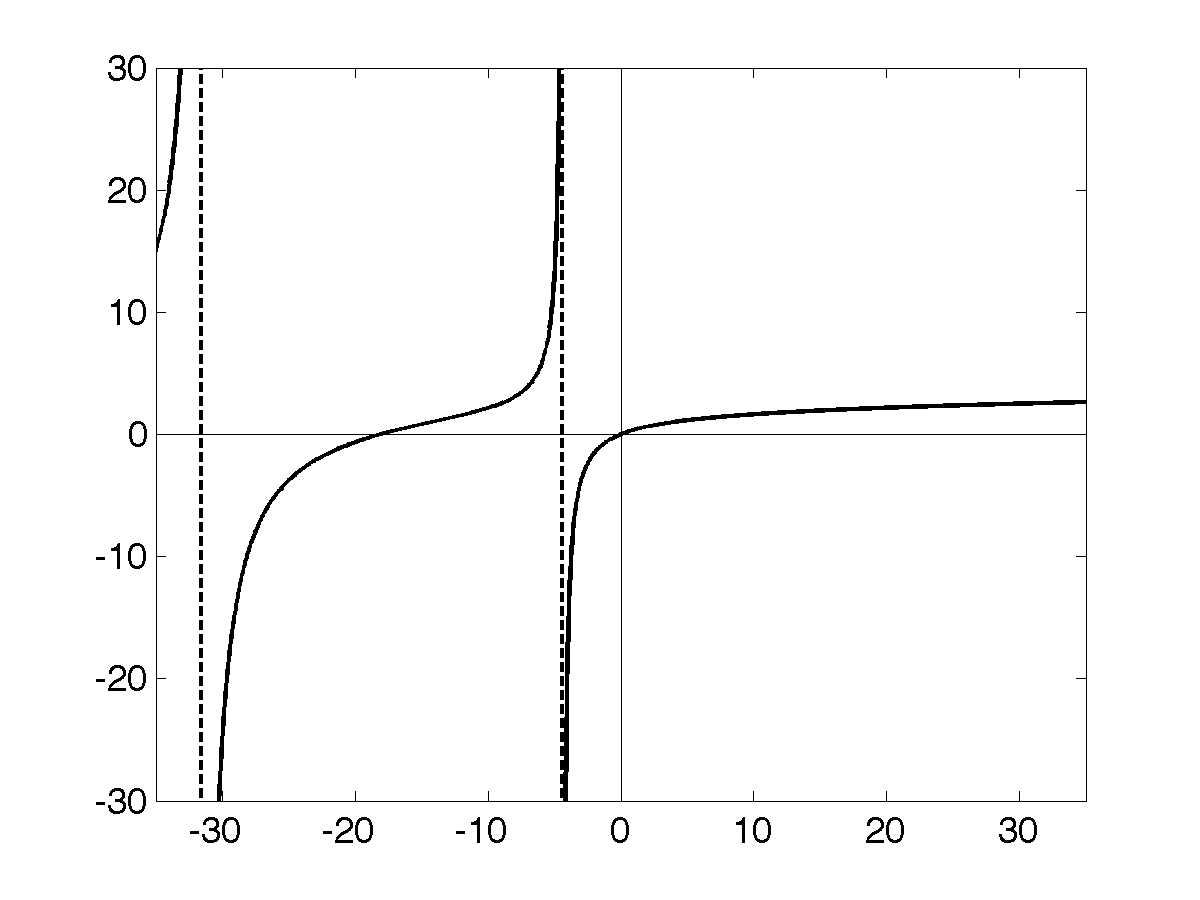}}
 \put(160,15){\includegraphics[width=7cm]{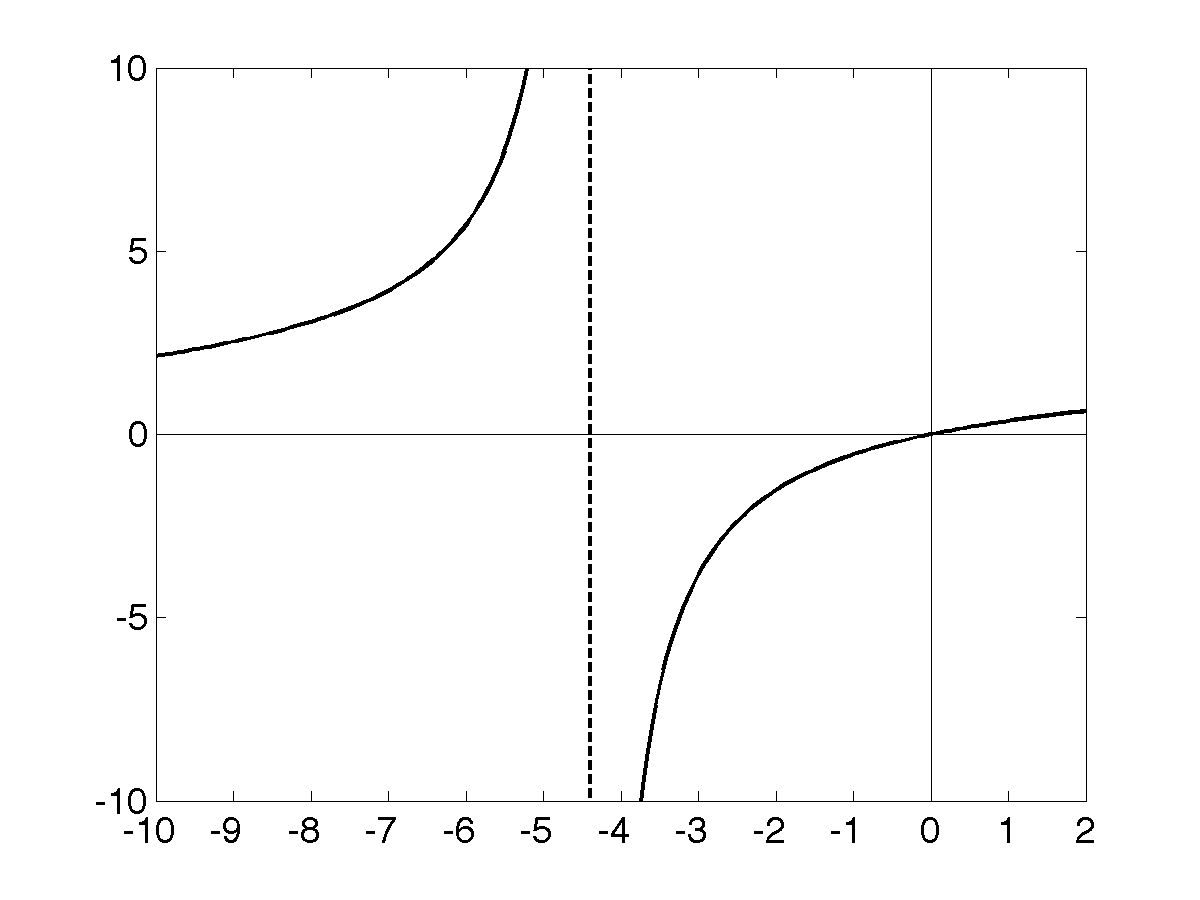}}
\put(42,0){$\lambda$}
\put(262,0){$\lambda$}
\end{picture}
\caption{\label{fig:qzeroevals2}Eigenvalues $\mu(\lambda)=\mu_0(\widetilde{q}_\lambda)$ corresponding to the second example potential. Left: plot with full parameter range $-35\leq \lambda\leq 35$. Right: detail of the left plot. Note that $\mu(0)=0$ and $\mu^\prime(0)>0$ as predicted by Lemma \ref{lemma:mu}. Also, note that Dirichlet eigenvalues of the potential $\widetilde{q}_\lambda$ in the unit disc cause singularities in $\mu(\lambda)$. Compare to Figure \ref{fig:qzeroevals}.}
\end{figure}

\begin{figure}
\begin{picture}(300,325)
 \put(-60,20){\includegraphics[width=14cm]{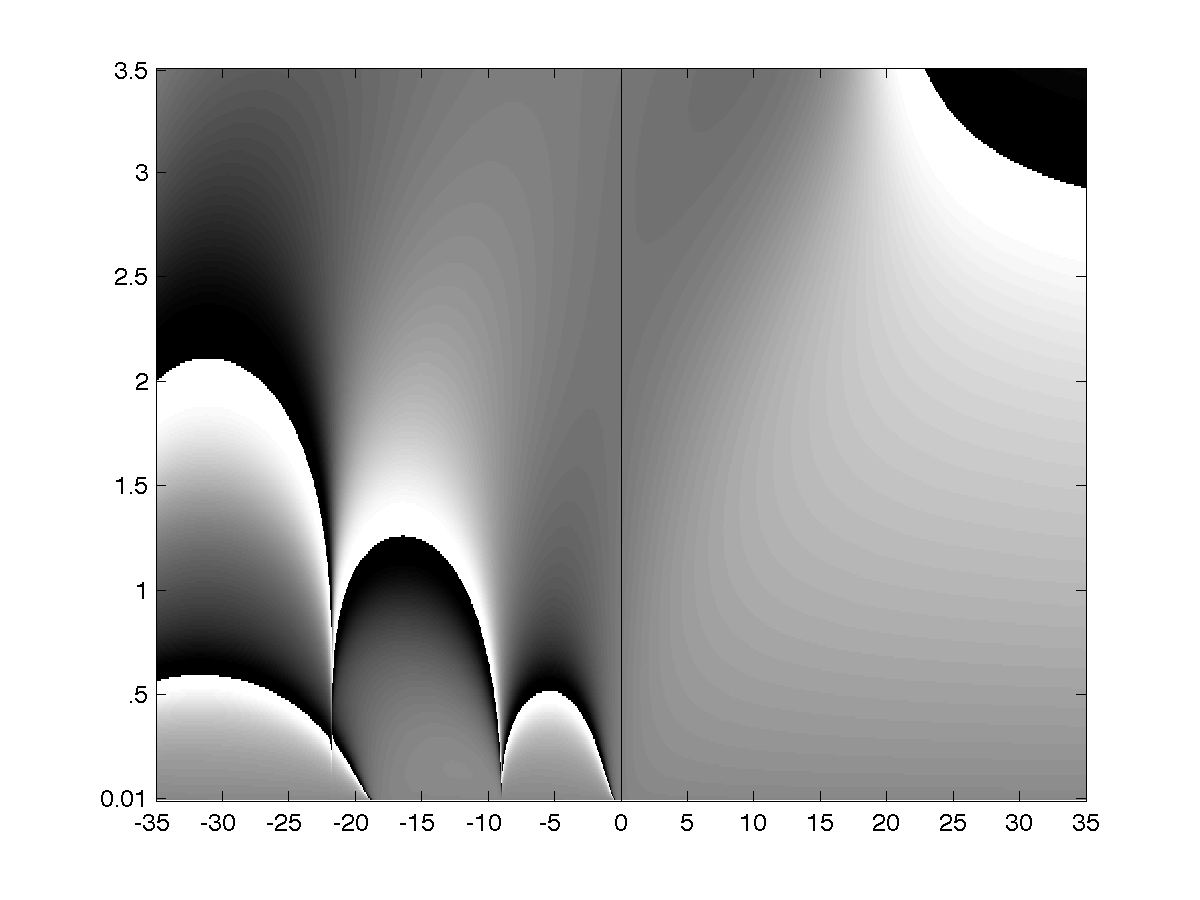}}
\put(140,0){$\lambda$}
\put(-75,200){\Large $|k|$}
\end{picture}
\caption{\label{fig:scat2A}Scattering transform corresponding to the second example. The horizontal axis is the parameter $\lambda$ in the definition ${\widetilde{q}}_\lambda = {\widetilde{q}}_0+\lambda w$ of the potential. The vertical axis is $|k|$.  There are curves along which a singular jump ``from $-\infty$ to $+\infty$'' appears.
The $k$ values at those curves are exceptional points. Compare to Figure \ref{fig:scat1A}.}
\end{figure}

\begin{figure}
\begin{picture}(400,160)
 \put(-20,10){\includegraphics[height=4.6cm]{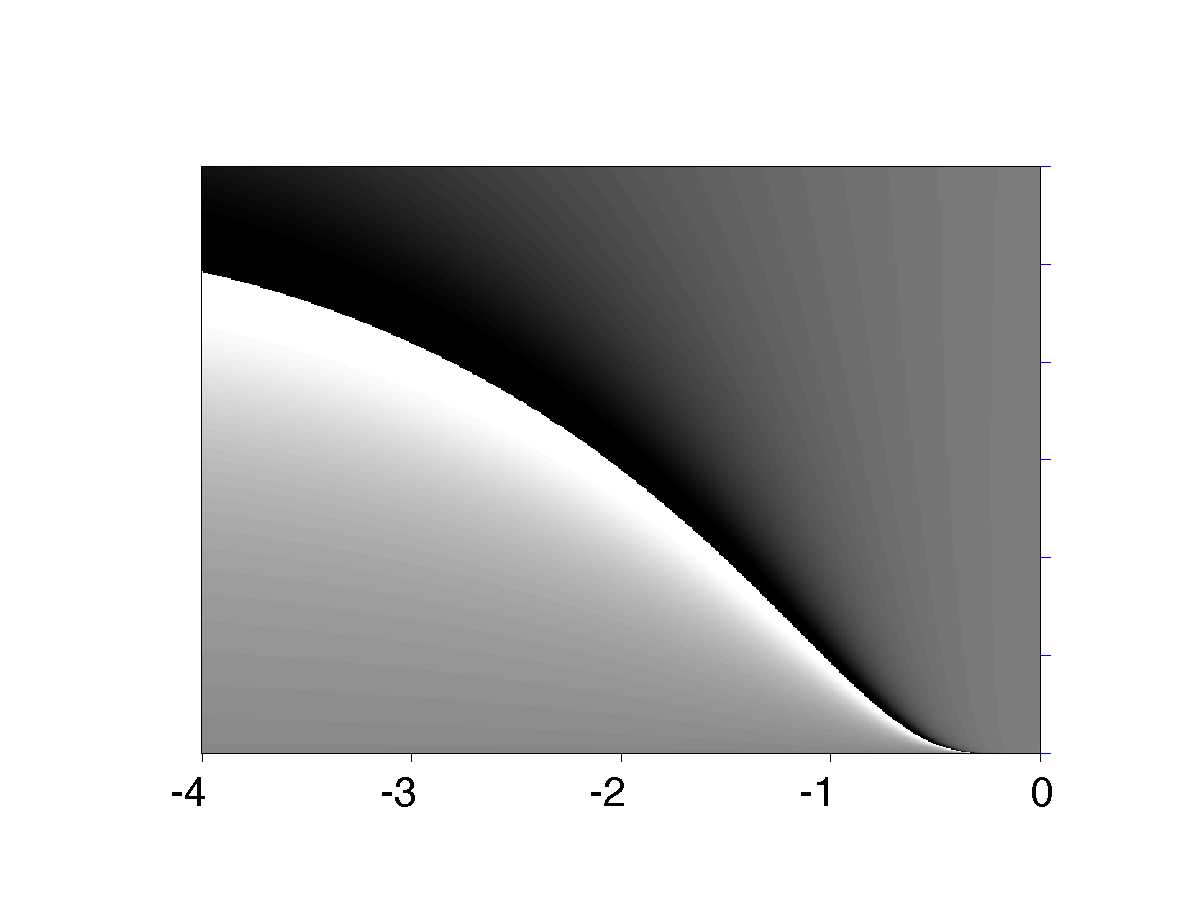}}
 \put(175,10){\includegraphics[height=4.8cm]{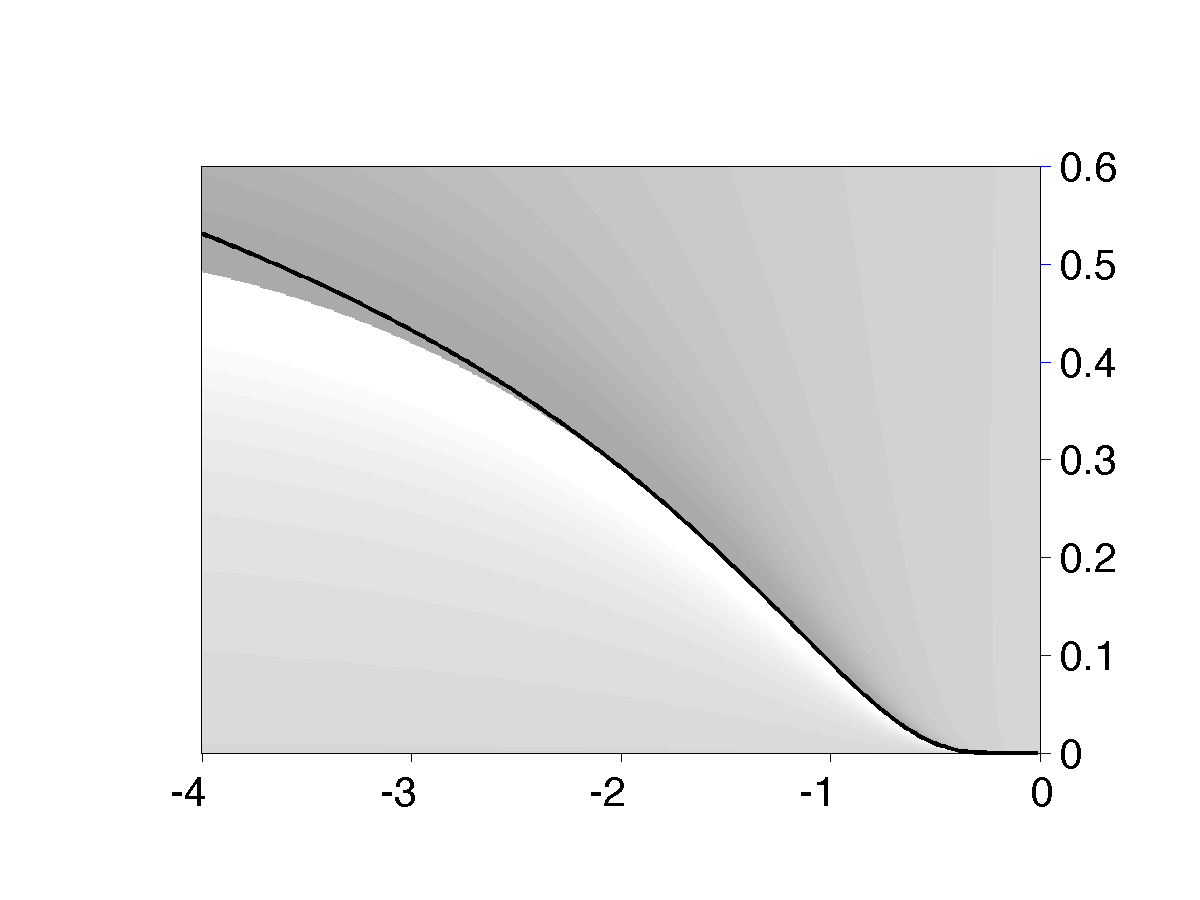}}
 \put(377,87){\large $|k|$}
 \put(105,0){$\lambda$}
 \put(300,0){$\lambda$}
 \put(220,140){$\displaystyle \exp\left[  2\pi\left(  h+\frac{1}{2\pi\mu(\lambda)}\right)  \right] $}
\end{picture}
\caption{\label{fig:asympfit2}Comparison of numerical results and the asymptotic formula (\ref{eq:r.lambda}) for the radius of the exceptional circle. This plot is for Example 2. Left: detail from Figure \ref{fig:scat2A} with parameters ranging in the rectangle $-4\leq\lambda\leq 0$ and $0.001\leq |k|\leq 0.6$. Right: the asymptotic function $r(\lambda)$ given by Theorem \ref{thm:main}. For ease of comparison, we also show in the background the pixel image from the left but with a lighter colormap. The asymptotic formula matches the computational result very closely in the interval $-2\leq\lambda\leq 0$. Compare to Figure \ref{fig:asympfit}.}
\end{figure}

\clearpage

\section{Discussion}

\noindent
We study zero-energy exceptional points of radial, compactly supported Schr\"odinger potentials in dimension two. Our work was inspired by  preliminary numerical experiments showing the emergence of singularities in the scattering transforms of potentials constructed by subtracting a test function from a radial conductivity-type potential.

We prove new results for radial, real-valued potentials of the form $q_\lambda = q+\lambda w$, where $q$ is of conductivity type, the test function $w$ is non-negative, and $\lambda\in\mathbb{R}$. It turns out that for small positive $\lambda$ there are no exceptional points and for small negative $\lambda$ there is exactly one exceptional circle whose radius we can determine asymptotically. In our two computational examples the asymptotic formula is quite accurate in the rather substantial interval $-2\leq \lambda \leq 0$, see Figures \ref{fig:asympfit} and \ref{fig:asympfit2}. See Figure  \ref{fig:scat1Adiagram} for a diagram illustrating all currently known theoretical facts about two-dimensional exceptional points at zero energy.

Our numerical computations raise some further theoretical questions as well. Figures \ref{fig:scat1A} and \ref{fig:scat2A} suggest that for negative $\lambda$ far away from zero there are several exceptional circles. Also, for large positive $\lambda$ some exceptional points appear, but it is unclear where in the $(\lambda,|k|)$ plane they originate. Furthermore, there are some quite complicated features in the parameter interval $-23\leq\lambda\leq-18.5$ that are very different for our two examples. See Figure \ref{fig:comparison} for a zoom-in.

\begin{figure}
\begin{picture}(300,325)
 \put(-119,-20){\includegraphics[width=17.5cm]{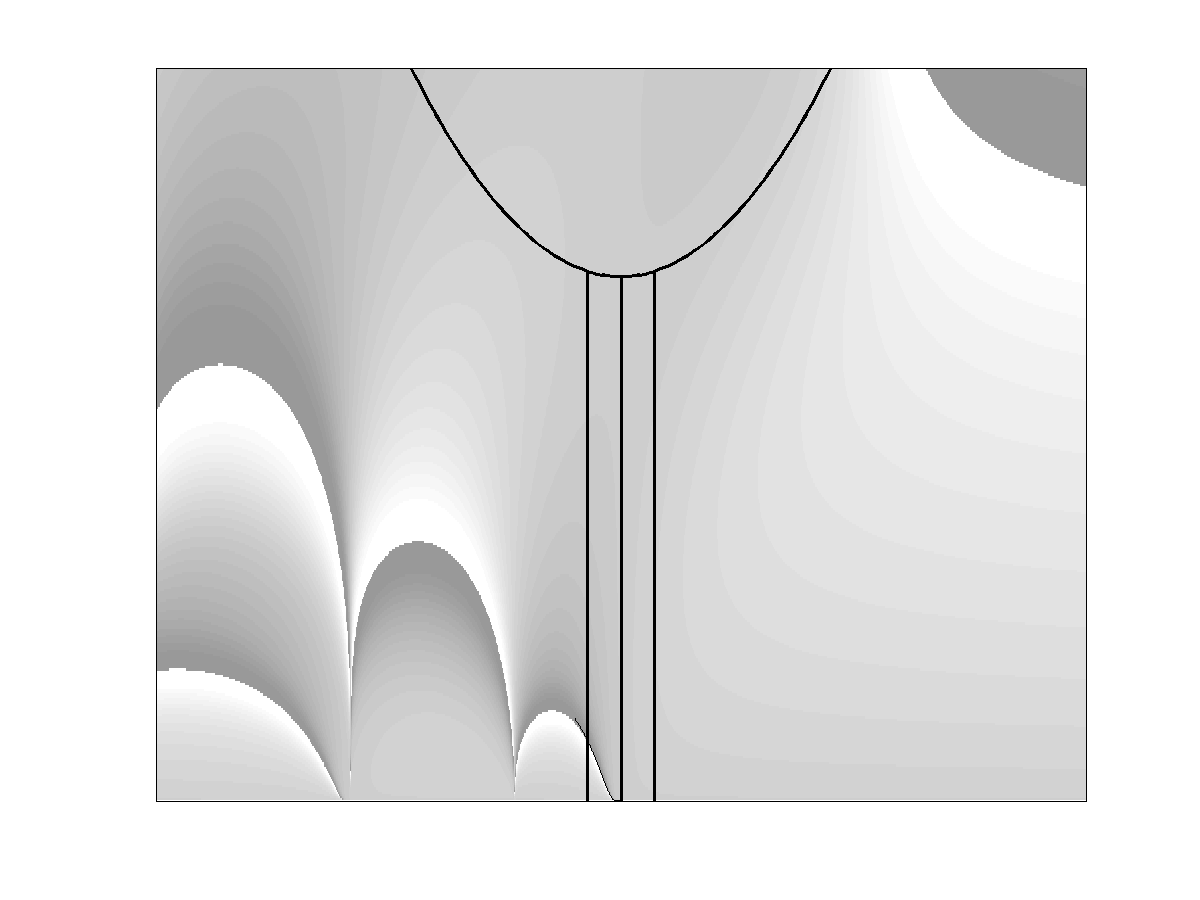}}
\put(136.5,8){$0$}
\put(150,8){$\lambda_0$}
\put(111,8){$-\lambda_0$}
\put(253,0){$\lambda$}
\put(-75,240){$|k|$}
\put(90,313){No exceptional points}
\put(127,65){\rotatebox{90}{Exactly one exceptional circle}}
\put(142,65){\rotatebox{90}{No nonzero exceptional points}}
\put(88,274){\rotatebox{-40}{$R(\lambda)$}}
\put(125.5,46){\rotatebox{-61}{\tiny $r(\lambda)$}}
\end{picture}
\caption{\label{fig:scat1Adiagram}Conceptual diagram illustrating the current understanding of exceptional points in dimension two. The underlying grayscale image shows the scattering transform of Figure \ref{fig:scat1A} corresponding to Example 1.  The potential $q_0$ is of conductivity type, and therefore by Nachman \cite{Nachman:1996} there are no exceptional points for $\lambda=0$. Theorem \ref{thm:main} above implies that there is a $\lambda_0>0$ such that (i) for $0<\lambda<\lambda_0$ the potential $q_\lambda$ has no nonzero exceptional points, and (ii) for $-\lambda_0<\lambda<0$ the complex numbers satisfying $|k|=r(\lambda)$ are the only nonzero exceptional points for $q_\lambda$.  Furthermore, the Neumann series argument in the proof of \cite[Thm 1.1]{Nachman:1996} shows that there exists a radius $R(\lambda)$, depending on $\|q_\lambda\|_{L^p(\mathbb{R}^2)}$, such that $q_\lambda$ does not have exceptional points satisfying $|k|>R(\lambda)$. Note that the number $\lambda_0$ and the radius $R(\lambda)$ shown above are not in scale. }
\end{figure}

\begin{figure}
\begin{picture}(400,180)
 \put(20,15){\includegraphics[height=4.9cm]{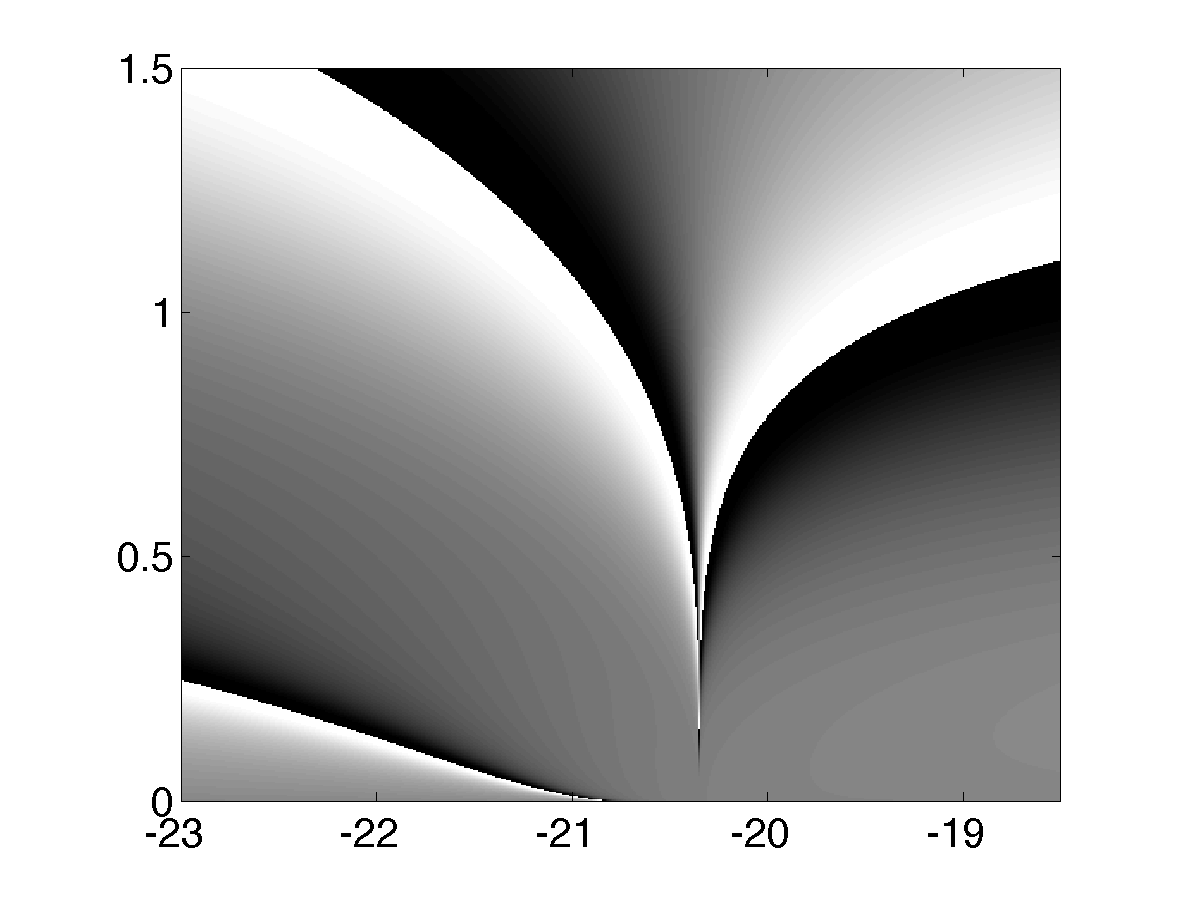}}
 \put(195,15){\includegraphics[height=4.8cm]{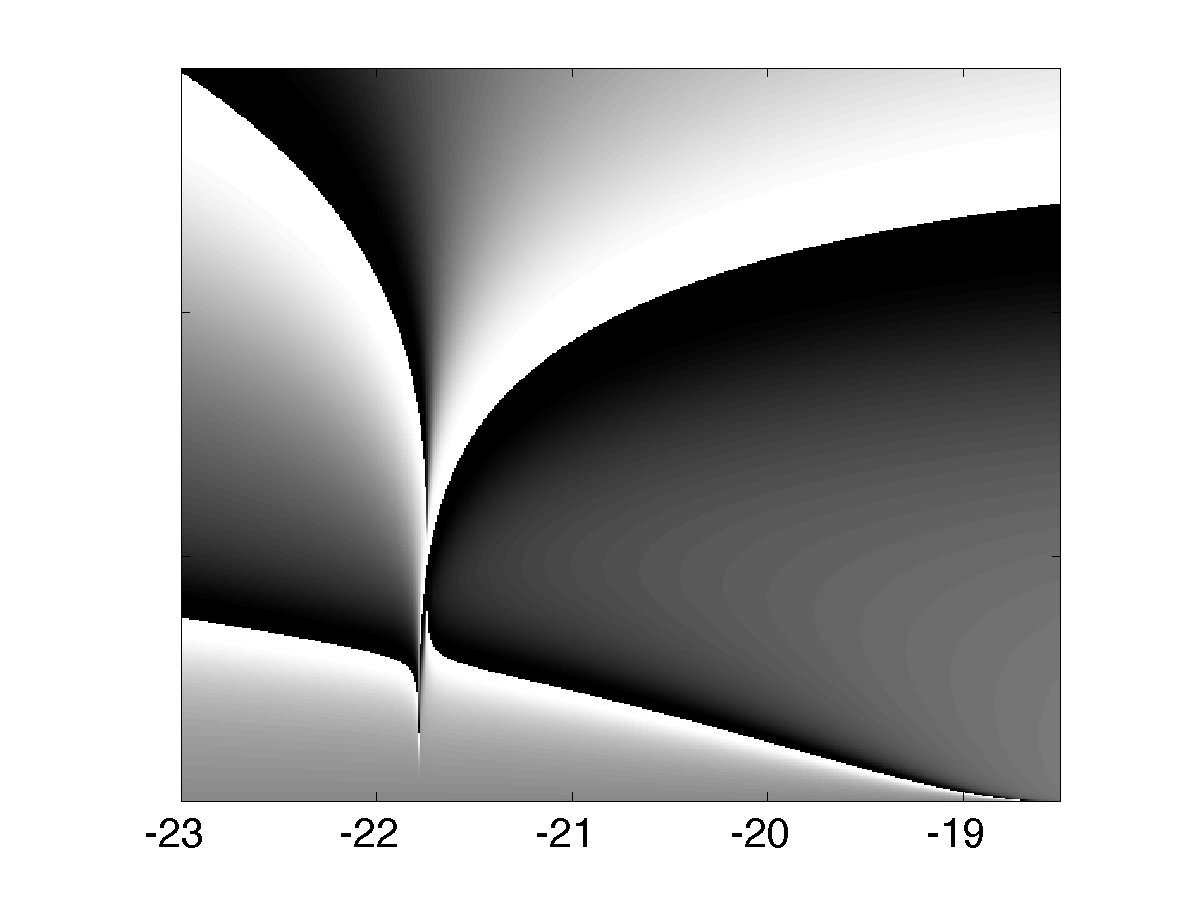}}
 \put(0,87){\large $|k|$}
 \put(105,0){\large $\lambda$}
 \put(278,0){\large $\lambda$}
 \put(29,158){\large Example 1}
 \put(203,158){\large Example 2}
\end{picture}
\caption{\label{fig:comparison}Comparison of the scattering transforms of the two example potentials in the parameter domain $-23\leq\lambda\leq-18.5$ and $0<|k|\leq 1.5$.}
\end{figure}

\appendix

\section{Radial symmetry of scattering transforms}\label{app:radial}

\noindent
Let $q_\lambda$ be a potential of the form (\ref{eq:q.lambda}). If $k\in\mathbb{C}\setminus 0$ is not an exceptional point of $q_\lambda$, then uniqueness of the CGO solution $\psi(z,k)$ shows that all $k^\prime\in\mathbb{C}$ with $|k^\prime|=|k|$ are non-exceptional for $q_\lambda$ as well. Furthermore, we can argue as in \cite[Section 4.1]{LMSS:2011} and find out that the scattering transform satisfies $\mathbf{t}_\lambda(k)=\mathbf{t}_\lambda(|k|)$ and $\overline{\mathbf{t}_\lambda(k)}=\mathbf{t}_\lambda(k)$.

Also, we know from Remark \ref{remark:circle} that exceptional points appear on a set of measure zero (circles centered at the origin). So, for illustrating $\mathbf{t}_\lambda(k)$ it is enough to display real-valued profiles of $\mathbf{t}_\lambda(|k|)$ evaluated at $|k|>0$, as is done in Figures \ref{fig:scat1A} and \ref{fig:scat2A}.

\section{Conductivity-type potentials and criticality}\label{appendix:Murata}

\noindent
Let $q(z)=q(|z|)$ be a radial potential of conductivity type in the sense of Definition \ref{def:condtype}. Then we can write $q=\psi^{-1}\left(  \Delta\psi\right)$ with some smooth and strictly positive function $\psi$ for which $\psi-1$ is compactly supported. Set
$$
  q_\lambda=q+\lambda w
$$
with $w\in C_0^\infty(\mathbb{R}^2)$ a radial, nonnegative test function which is not identically zero.

First of all, \cite[Theorem 3.1(iii)]{Murata:1986} implies that $q_0$ is critical. To see this, note that
$
  g_0(|z|) = \psi(|z|)
$
is the unique solution of \cite[equation (3.3)]{Murata:1986} with $j=0$ and $n=2$ satisfying the asymptotic condition $g_0(|z|)=1+o(1)$ as $|z|\rightarrow\infty$. Then the integral in \cite[formula (3.5)]{Murata:1986} diverges.

It follows from \cite[Theorem 2.4(i)]{Murata:1986} that the potential $q_\lambda$ is supercritical for all $\lambda<0$. In that case we know from \cite{alleg1974,MP: 1978,alleg1981} that there is no positive solution of $\left(  -\Delta+q_\lambda\right)  \psi=0$, so $q_\lambda$ is not of conductivity type.

It follows from \cite[Theorem 2.5(i)]{Murata:1986} that the potential $q_\lambda$ is subcritical for all $\lambda>0$. Furthermore, by \cite[Theorem 5.6(i)]{Murata:1986} the unique positive solution of $\left(  -\Delta+q_\lambda\right)  \psi=0$ has asymptotics $c\log|z|+O(1)$ with a positive constant $c$ as $|z|\rightarrow\infty$. Thus $q_\lambda$ is not of conductivity type because that would require a finite and constant limit $\lim_{\left\vert z\right\vert \rightarrow \infty}\psi(z)$.

\section{Spectral Properties of Conductivity-Type Potentials}
\label{appendix:spectrum}

The purpose of this appendix is to analyze the spectrum of the self-adjoint operator $H=-\Delta+q$ when
$q$ is a potential of conductivity type. 

\begin{proposition}
\label{prop:cond.spec}Suppose that $q_{0}\in\mathcal{C}_{0}^{\infty
}(\mathbb{R}^{2})$ is a potential of conductivity type, and let $H=-\Delta
+q_{0}$. Then $H$ has no $L^{2}$-eigenvalues.
\end{proposition}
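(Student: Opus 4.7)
The plan is to split the proof into three cases according to the sign of the hypothetical $L^2$-eigenvalue $E$, with the central tool being the positive solution $\psi$ of $H\psi=0$ that exists by hypothesis. This $\psi$ simultaneously provides a ground-state substitution and pins the bottom of the spectrum of $H$ at zero.

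For $E<0$, I would use the Allegretto--Piepenbrink ground-state transformation: for any $u\in\mathcal{C}_0^\infty(\mathbb{R}^2)$, writing $u=\psi v$ and integrating by parts (using $\Delta\psi=q_0\psi$) yields
\[
\int_{\mathbb{R}^2}\bigl(|\nabla u|^2 + q_0|u|^2\bigr)\,dz \;=\; \int_{\mathbb{R}^2}\psi^2|\nabla v|^2\,dz \;\geq\; 0,
\]
so $H\geq 0$ as a quadratic form and no negative eigenvalues are possible. For $E>0$ embedded in $[0,\infty)$, compact support of $q_0$ reduces $H\phi=E\phi$ to the Helmholtz equation $(-\Delta - E)\phi=0$ on the complement of a large disk with $\phi\in L^2$; Rellich's lemma (equivalently Kato's theorem on absence of positive eigenvalues for short-range potentials) forces $\phi$ to vanish near infinity, and unique continuation for smooth $q_0$ propagates this to $\phi\equiv 0$ on all of $\mathbb{R}^2$.

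The delicate case, and the main obstacle, is $E=0$, which sits at the threshold of the essential spectrum and admits no spectral gap argument. Here I would set $v=\phi/\psi$, well-defined since $\psi\geq c>0$ everywhere. Using $\Delta\psi=q_0\psi$ and $\Delta\phi=q_0\phi$ to cancel the zeroth-order term gives the divergence-form equation $\nabla\cdot(\psi^2\nabla v)=0$ on $\mathbb{R}^2$. Outside the support of $q_0$ we have $\psi\equiv 1$, so $v=\phi$ is harmonic on the exterior of a large disk and lies in $L^2$ there; a Laurent-type expansion at infinity shows that the constant and logarithmic modes are forbidden by the $L^2$ condition, so $v(z)=O(|z|^{-1})$ and $\nabla v(z)=O(|z|^{-2})$ at infinity. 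Multiplying the divergence-form equation by $v$ and integrating over $B_R$ yields
\[
\int_{B_R}\psi^2|\nabla v|^2\,dz \;=\; \int_{\partial B_R} \psi^2\,v\,\partial_\nu v\,dS,
\]
and since $\psi\equiv 1$ on $\partial B_R$ for $R$ large, the decay bounds render the right-hand side $O(R^{-2})\to 0$ as $R\to\infty$. Hence $\nabla v\equiv 0$, so $v$ is constant; combined with decay at infinity this forces $v\equiv 0$, whence $\phi\equiv 0$. The subtle point throughout is that in dimension two one must genuinely use the $L^2$ hypothesis to exclude the constant and logarithmic harmonic modes at infinity, which is precisely where the two-dimensional analysis is more delicate than in higher dimensions.
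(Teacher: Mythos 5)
Your proof is correct, and it splits into exactly the three cases the paper uses. For $E<0$ you make explicit the ground-state transformation that justifies the paper's appeal to ``positivity of the quadratic form,'' and for $E>0$ you cite the same Kato--Rellich absence-of-positive-eigenvalues argument the paper delegates to Reed--Simon (Theorem XIII.56). The genuinely different ingredient is your treatment of the threshold case $E=0$, which is the only nontrivial part of the proposition. You pass to $v=\phi/\psi$ and derive the divergence-form identity $\nabla\cdot(\psi^2\nabla v)=0$, then close the argument with a Caccioppoli-type energy estimate over $B_R$ together with the decay of $L^2$ harmonic functions on the exterior of a disk. The paper instead forms the Wronskian $v=h\,\partial\psi_0-\psi_0\,\partial h=-\psi_0^2\,\partial(h/\psi_0)$, observes that it solves a pseudoanalytic equation $\overline{\partial}v=av-\overline{a}\overline{v}$ with compactly supported $a$, invokes the vanishing theorem for generalized analytic functions (Vekua), concludes $h/\psi_0$ is antiholomorphic and bounded, and then applies Liouville. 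The two routes use the same substitution $h/\psi_0$ but different closure arguments: yours is a real-variable energy argument, elementary and dimension-independent in spirit, and makes the role of the $L^2$ hypothesis at infinity completely transparent; the paper's is specifically two-dimensional and complex-analytic, which fits the $\overline{\partial}$-machinery used throughout the scattering theory in the rest of the paper. One small optimization worth noting: in two dimensions the $r^{-1}$ Fourier mode is itself not in $L^2$ of the exterior, so your harmonic tail is actually $O(|z|^{-2})$ rather than $O(|z|^{-1})$; your weaker bound still makes the flux integral vanish, so this costs you nothing, but the sharper decay is available for free.
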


\begin{proof}
By standard arguments (see, for example, Theorem XIII.56 in \cite{RSIV:1978}),
the equation $H\psi=\lambda\psi$ has no $L^{2}$ solutions for any $\lambda>0$,
while the positivity of the quadratic form associated to $H$ shows that the
there can be no eigenvalues with $\lambda<0$. It remains to show that, also, there are no
solutions of $H\psi=0$ that vanish at infinity, hence no $L^{2}$-eigenvalues
at zero energy. This is an immediate consequence of arguments in Nachman
\cite{Nachman:1996} but we reproduce them for the reader's convenience.

Suppose that $h\in H^{1}(\mathbb{R}^{2})$ is a weak solution of $Lh=0$
\ Without loss we may assume that $h$ is real-valued. By elliptic regularity, $h$ is a bounded, $\mathcal{C}^\infty$ function.  Let%
\[
v=h\partial\psi_{0}-\psi_{0}\partial h.
\]
Note that, as $\psi_{0}\in L^{\infty}$, $\partial\psi_{0}\in \mathcal{C}^\infty_0$, and $h\in
H^{1}$, it follows that $v\in L^{2}$. A straightforward computation using the
facts that $4\partial\overline{\partial}\psi_{0}=q\psi_{0}$ and $4\partial
\overline{\partial}h=qh$ shows $\overline{\partial}v=av-\overline{a}%
\overline{v}$ where $a=\overline{\partial}\psi_{0}/\psi_{0}$. Note that $a\in
\mathcal{C}^\infty_0$. By a standard vanishing theorem for generalized analytic functions (see for
example \cite{Vekua:1962} , we conclude that $v=0$, and hence that $\partial\left(  h/\psi
_{0}\right)  =0$, i,.e., $h/\psi_{0}$ is antiholomorphic. Since $h$ vanishes
at infinity while $\psi_{0}$ is bounded below, we conclude that $h=c\psi_{0}$
for a constant $c$, and hence, $h=0$ since $\psi_{0}(z)\rightarrow1$ as
$\left\vert z\right\vert \rightarrow\infty$.
\end{proof}

\begin{remark}
\label{rem:cond.unique}The argument above shows that a conductivity-type
potential is represented by a \emph{unique} normalized positive solution
$\psi_{0}$.
\end{remark}

Next, we consider spectral properties of $-\Delta+q$ on a bounded domain
containing the support of $q$. Recall that $H_{0}^{1}(\Omega)$ is the completion of $\mathcal{C}_0^\infty(\Omega)$ in the $H^1$-norm.

\begin{proposition}
\label{prop:cond.no.zero.ev}Let $\Omega$ be a bounded domain in $\mathbb{R}%
^{2}$ with smooth boundary, so chosen that $\operatorname*{supp}q_{0}$ is
strictly contained in $\Omega$. Denote by $H_{\Omega}$ the operator
$-\Delta+q_{0}$ with Dirichlet conditions on $\partial\Omega$. Then $0$ is not
a Dirichlet eigenvalue of $H_{\Omega\text{ }}$.
\end{proposition}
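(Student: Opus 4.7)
The plan is to use the Liouville-type factorization afforded by the positive ground-state function $\psi_0$ that comes with the conductivity-type hypothesis. By Definition \ref{def:condtype} (and the uniqueness assertion in Remark \ref{rem:cond.unique}), there is a smooth $\psi_0$ with $\psi_0 \equiv 1$ outside a bounded set and $\psi_0 \geq c > 0$ on $\bar\Omega$, satisfying $(-\Delta + q_0)\psi_0 = 0$ in $\mathbb{R}^2$. I will substitute $u = \psi_0 v$ into the eigenvalue equation to obtain a divergence-form equation with uniformly positive coefficient, for which Dirichlet uniqueness is immediate from the energy identity.

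More precisely, suppose $u \in H^1_0(\Omega)$ satisfies $(-\Delta + q_0)u = 0$ in $\Omega$. Since $\psi_0$ is smooth and bounded below by a positive constant on $\bar\Omega$, the function $v := u/\psi_0$ lies in $H^1_0(\Omega)$. A direct computation using $(-\Delta + q_0)\psi_0 = 0$ yields the identity
\begin{equation*}
(-\Delta + q_0)(\psi_0 v) \;=\; -\psi_0\,\Delta v \;-\; 2\,\nabla\psi_0\cdot\nabla v \;=\; -\psi_0^{-1}\,\nabla\!\cdot\!\bigl(\psi_0^{2}\nabla v\bigr),
\end{equation*}
so that the eigenvalue equation becomes $\nabla\!\cdot(\psi_0^2 \nabla v) = 0$ in $\Omega$ with $v|_{\partial\Omega} = 0$.

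Pairing this divergence-form equation with $v$ and integrating by parts over $\Omega$ (using $v|_{\partial\Omega}=0$ to kill the boundary term) gives
\begin{equation*}
\int_\Omega \psi_0^2\,|\nabla v|^2\,dz \;=\; 0.
\end{equation*}
Since $\psi_0 \geq c > 0$ on $\bar\Omega$, this forces $\nabla v \equiv 0$, hence $v$ is constant; together with $v|_{\partial\Omega}=0$ we conclude $v \equiv 0$ and therefore $u = \psi_0 v \equiv 0$.

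There is no significant obstacle here: the hypothesis that $\operatorname{supp} q_0$ is strictly inside $\Omega$ is not actually needed for the argument; the only facts used are that a strictly positive smooth solution $\psi_0$ of $(-\Delta + q_0)\psi_0 = 0$ exists on all of $\mathbb{R}^2$ and is bounded below by a positive constant on $\bar\Omega$, both guaranteed by Definition \ref{def:condtype}. The minor technical point to verify is that the Liouville substitution $v = u/\psi_0$ indeed lands in $H^1_0(\Omega)$, which follows because $\psi_0^{\pm 1}$ are smooth multipliers on $H^1_0(\Omega)$ given the positive lower bound on $\psi_0$.
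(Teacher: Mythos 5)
Your proof is correct, but it follows a genuinely different route from the paper's. The paper extends the putative eigenfunction $\psi$ by zero to a function $\chi$ on $\mathbb{R}^2$, observes that $\mathfrak{q}(\chi,\chi)=0$, and then uses the nonnegativity of the quadratic form $\mathfrak{q}$ (a criticality fact about conductivity-type potentials established via Murata in Appendix~\ref{appendix:Murata}) to argue that $\chi$ is an absolute minimizer and hence a global weak solution of $H\chi=0$; since $\chi$ vanishes on the open set $\mathbb{R}^2\setminus\Omega$, a unique continuation theorem forces $\chi\equiv0$. You instead run the Liouville/ground-state substitution $u=\psi_0 v$, convert the eigenvalue equation into the divergence-form ``conductivity equation'' $\nabla\!\cdot(\psi_0^2\nabla v)=0$, and read off $\nabla v\equiv0$ from the energy identity $\int_\Omega\psi_0^2|\nabla v|^2=0$, then conclude from $v|_{\partial\Omega}=0$. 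Your argument is more elementary and self-contained: it avoids unique continuation and does not invoke nonnegativity of $\mathfrak{q}$ (which itself is proved via Murata). It is also conceptually apt, being exactly the inverse of Nachman's $\psi=\sigma^{1/2}u$ substitution that motivates this whole theory. The one thing to state with care is that the divergence-form equation and the ensuing integration by parts should be phrased weakly (take $\phi=\psi_0\zeta$ as test function in the weak formulation for $u$, with $\zeta\in H_0^1(\Omega)$; a direct computation using $\Delta\psi_0=q_0\psi_0$ then gives $\int_\Omega\psi_0^2\nabla v\cdot\nabla\zeta=0$), since a priori $v$ is only $H_0^1$; with that gloss, the argument is rigorous, and you are also right that the hypothesis $\operatorname{supp}q_0\subset\subset\Omega$ plays no role in it.
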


\begin{proof}
Suppose that $\psi\in H_{0}^{1}(\Omega)$ satisfies $H_{\Omega}\psi=0$. Extend
$\psi$ to a function $\chi$ in $H^{1}(\mathbb{R}^{2})$ by setting $\chi(z)=0$
for $z\in\mathbb{R}^{2}\backslash\Omega$. The distribution gradient of $\chi$
is given by $\left(  \nabla\chi\right)  (z)=(\nabla\psi)(z)$ for $z\in\Omega$,
and $(\nabla\chi)(z)=0$ otherwise. Letting $\mathfrak{q}$ be the quadratic
form of $-\Delta+q_{0}$ on $H^{1}(\mathbb{R}^{2})$, we have%
\[
\mathfrak{q}\left(  \chi,\chi\right)  =0
\]
so that, for any $\varphi\in\mathcal{C}_{0}^{\infty}(\mathbb{R}^{2})$, the
function%
\[
F(t)=\mathfrak{q}\left(  \chi+t\varphi,\chi+t\varphi\right)
\]
has an absolute minimum at $t=0$. Since $F^{\prime}(0)=0$ we recover
$\operatorname{Re}\mathfrak{q}(\varphi,\chi)=0$ for any such $\varphi$. It
follows that $\mathfrak{q}(\varphi,\chi)=0$ for all $\varphi\in\mathcal{C}%
_{0}^{\infty}(\mathbb{R}^{2})$, hence $\chi$ is a weak solution of $H\chi=0$
which vanishes identically outside $\Omega$. We can now use unique
continuation arguments (see, for example, \cite{RSIV:1978}, Theorem XIII.57)
now show that $\chi=0$, hence $\psi=0$.
\end{proof}

\end{document}